\documentclass[11pt]{amsart}
\usepackage[english]{babel}
\usepackage[a4paper,margin=1in]{geometry}
\usepackage[T1]{fontenc}
\usepackage[utf8]{inputenc}
\usepackage{amsmath,amssymb,amsthm}
\usepackage{mathtools}
\usepackage{enumitem}
\usepackage[colorlinks=true,linkcolor=blue,citecolor=blue,urlcolor=blue]{hyperref}

\hypersetup{
  colorlinks=true,
  linkcolor=blue,
  citecolor=blue,
  urlcolor=blue
}

\newtheorem{theorem}{Theorem}
\newtheorem{lemma}[theorem]{Lemma}
\newtheorem{definition}[theorem]{Definition}

\newtheorem{question}[theorem]{Question}

\begin{document}

\subjclass[2020]{03D25, 03D30}
\keywords{weak 1-genericity, superhighness, low$_2$ r.e. degrees}

\title[AEWG r.e. sets]{Almost-Everywhere Computation of Weak Generics Relative to R.E. Sets}

\author{Xuanheng Zhao}
\address{School of Mathematics\\
 Nanjing University\\
 Nanjing, Jiangsu 210093, People's Republic of China}

\email{xuanheng21@gmail.com}

\begin{abstract}
  A set $B$ is AEWG if almost every set computes a weakly
  1-$B$-generic. Extending two results of Hirschfeldt, Jockusch and Schupp, we prove that there is an AEWG superhigh r.e. set and every $\text{low}_2$ r.e. set is AEWG.
\end{abstract}

{\maketitle}

\section{Introduction}

The main results in this paper are pure r.e. degree theoretical statements.
However, the motivation comes from coarse computability, a new area that has
received attention of mathematical logicians in the last two decades.
We say that two sets $A, B \subseteq \omega$ are {\emph{coarsely similar}}, if their symmetric difference has asymptotic density 0. Coarse similarity is an equivalence relation on $\mathcal{P} (\omega)$.
Let $\mathcal{S}$ denote the set of all coarse similarity classes and write
$[A]$ for the coarse similarity class of the set $A$. 

Hirschfeldt, Jockusch
and Schupp {\cite{MR4741732}} investigated the natural metric $\delta ([A], [B])$ on $\mathcal{S}$, which is equal to the upper asymptotic density of $\{ m: A (m)
   \neq B (m) \}$. They then defined a metric $H$
between Turing degrees as follows:
Given a Turing degree $\mathbf{d}$, a set $A \subseteq \omega$ is
{\emph{coarsely $\mathbf{d}$-computable at density}} $r \in [0, 1]$ if there
is a set $C \leq_T \mathbf{d}$ such that the lower asymptotic density of the set $\{m:A(m)=C(m)\}$ is no less than $r$.
The {\emph{coarse $\mathbf{d}$-computability bound}} of a set $A$ is
 the supremum of all reals $r$ for which $A$ is coarsely $\mathbf{d}$-computable at density $r$.
Let $\bar{\mathbf{d}} = \{ [A] : \gamma_{\mathbf{d}} (A) = 1 \}$ for any
degree $\mathbf{d}$. The {\emph{Hausdorff distance}}, $H (\mathbf{d}, \mathbf{e})$,
between the degrees $\mathbf{d}$ and $\mathbf{e}$ is the Hausdorff distance of $\mathbf{d}$ and $\mathbf{e}$ in $\mathcal{S}$.

It was shown in {\cite{MR4741732}} that $H$ is a $\left\{ 0, \frac{1}{2}, 1 \right\}$-valued metric between degrees, which is a relativization of Monin's Theorem {\cite{MR3883780}}.
A degree $\mathbf{a}$ is {\emph{dispersive}}
if $H (\mathbf{a}, \deg (B)) = 1$ for almost every set $B$, where $\deg (B)$
is the Turing degree of $B$. A degree $\mathbf{a}$ is {\emph{attractive}} if
$H (\mathbf{a}, \deg (B)) = \frac{1}{2}$ for almost every set $B$. The
Kolmogorov 0-1 Law (see {\cite{MR2732288}}) implies that any measurable Turing
invariant set (a collection of sets that is closed under Turing equivalence)
has measure 0 or 1. In particular, every Turing degree is either dispersive or
attractive. Certain classes of degrees are proved to be contained in the
dispersive or attractive degrees.

\begin{theorem}[{\cite{MR4741732}}]
  \label{1}
  \begin{enumerate}
  
      \item There is a dispersive high r.e. degree.
      \item Every low r.e. degree is dispersive.
      \item The class of dispersive degrees is closed downwards.
      \item Every almost everywhere dominating degree is attractive.
  \end{enumerate}
\end{theorem}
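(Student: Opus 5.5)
Theorem~\ref{1} has four independent parts. I would organize all of them around one characterization, which I expect Hirschfeldt, Jockusch and Schupp prove first. For a degree $\mathbf{a}$ and a random-like set $B$, the distance $H(\mathbf{a},\deg(B))$ equals $1$ exactly when $B$ computes a set that is weakly $1$-generic relative to $\mathbf{a}$. This is the relativized form of Monin's argument.

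The reasoning behind the characterization is as follows.
\begin{enumerate}
\item A weakly $1$-$\mathbf{a}$-generic $G\le_T B$ can be used to "scramble" any $\mathbf{a}$-computable set. For example, take $A\oplus G$-style interval codings, so that $G$ disagrees on intervals of growing length with every coarse $\mathbf{a}$-approximation.
\item This yields distance $1$.
\item Conversely, if $B$ computes no such generic, then for almost every $B$ one can build an $\mathbf{a}$-computable set that is coarsely close at density $\tfrac12$ to $B$-computable sets. Monin's $\tfrac12$ bound comes from a list-decoding argument.
\end{enumerate}
With this in hand, $\mathbf{a}$ is dispersive if and only if almost every set computes a weakly $1$-$\mathbf{a}$-generic. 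This is exactly the AEWG notion of the abstract.

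Parts (3) and (4) are then short.
\begin{itemize}
\item \emph{Part (3).} If $\mathbf{b}\le\mathbf{a}$, then every weakly $1$-$\mathbf{a}$-generic is weakly $1$-$\mathbf{b}$-generic. Indeed, $\mathbf{b}$-r.e. dense sets are $\mathbf{a}$-r.e. dense sets, so downward closure is immediate.
\item \emph{Part (4).} Suppose $\mathbf{a}$ is almost everywhere dominating. Then for almost every $B$, every $B$-computable function is dominated by an $\mathbf{a}$-computable function. Given $G\le_T B$ via a functional with modulus $f\le_T B$, dominate $f$ by $g\le_T\mathbf{a}$. Then define the $\mathbf{a}$-computable dense open set of strings that disagree with $\Phi^B$ on blocks sized by $g$. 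More precisely, use $\mathbf{a}$ to enumerate, for each $n$, extensions avoiding all values any $B$ could produce within time $g$. This has to be done carefully, typically by working with density sets: the set of strings $\sigma$ such that $\mu\{X:\Phi^X\succ\sigma\}$ is small is $\mathbf{a}$-c.e. relative to $g$.
\item The measure estimate shows that almost no $B$ computes a weakly $1$-$\mathbf{a}$-generic. Hence $\mathbf{a}$ is attractive.
\end{itemize}

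Part (2) uses the low case. If $A$ is low, then $A'\equiv_T\emptyset'$, so a $\Sigma^0_1(A)$ dense set is $\Delta^0_2$. For each such set $W^A_e$ one can $\emptyset'$-approximate its density witnesses. It then suffices to show that almost every $B$ computes a $G$ meeting all of them. Kurtz's theorem gives that almost every set computes a weakly $1$-generic. I would relativize the proof of Kurtz's theorem. On a block of $B$ coded by $0^k1$, extend $G$ by a string found in the current approximation to $W^A_e$. Since $A$ is r.e., the approximation to $A$ changes only finitely often on the relevant use. So each requirement is eventually met, except on a measure-zero set of $B$; the measure-zero bound follows from a Borel--Cantelli estimate using $\sum 2^{-k}$.

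Part (1) is a finite-injury or tree construction of a high r.e. $A$. It satisfies highness requirements (making $A$ compute dominating functions through thickness of columns) while preserving the Kurtz-style coding above for almost every $B$. Specifically, we preserve the $A$-uses needed to certify $G$-extensions, with measure cost at most $2^{-e}$ per requirement.

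The main obstacle is the converse direction of the characterization in step 3, which is needed for part (4). I would attack it first through the measure-theoretic "majority vote" argument. That is, I would show that if $\Phi^B$ avoids weak $\mathbf{a}$-genericity for positive-measure many $B$, then density-$\tfrac12$ agreement is forced, and then combine this with the 0-1 law.
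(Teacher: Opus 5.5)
Your plan rests on the claim that $\mathbf{a}$ is dispersive if and only if almost every set computes a weakly $1$-$\mathbf{a}$-generic, and only one direction of that claim is available.

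The direction in your step~1 is correct. A weakly $1$-$\mathbf{a}$-generic $G\le_T B$ disagrees with every $\mathbf{a}$-computable set on intervals $[n,2^n)$ for infinitely many $n$. So $\delta([G],\bar{\mathbf a})=1$ and $H(\mathbf a,\deg(B))=1$, and hence AEWG implies dispersive. This is exactly how the paper, following \cite{MR4741732}, gets parts (1) and (2) from Theorem~\ref{i}.

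The converse, dispersive $\Rightarrow$ AEWG, is Question~\ref{q1}. The paper says it is open even for r.e.\ degrees, and your step~3 gives no argument for it. For arbitrary $B$ the pointwise version is simply false. If $\mathbf a$ is a $1$-generic degree and $B$ is computable, then $H(\mathbf a,\deg(B))=1$ because $\bar{\mathbf a}$ is far from $\bar{\mathbf 0}$, yet $B$ computes no weakly $1$-generic. Distance $1$ can come from either side of the Hausdorff distance.

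Parts (3) and (4), as you argue them, use exactly this open direction.
\begin{itemize}
\item In (3) you only show that AEWG is closed downward.
\item In (4) you show that an a.e.d.\ degree is not AEWG. That is true; most cleanly, the principal function of a weakly $1$-$\mathbf a$-generic escapes every $\mathbf a$-computable function. But passing from ``not AEWG'' to ``attractive'' is the contrapositive of dispersive $\Rightarrow$ AEWG.
\end{itemize}
Both parts have to be proved directly about $H$; the paper only cites \cite{MR4741732} for them.

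Your argument for (2) fails for a different reason: it proves too much. The step doing the work is ``since $A$ is r.e., the approximation to $A$ changes only finitely often on the relevant use, so each requirement is eventually met.'' This uses only that $A$ is r.e. The $\Delta^0_2$-ness of $W_e^A$ that you derive from lowness never enters the construction. So the argument would apply to $\emptyset'$. But $\mathbf 0'$ is a.e.d.\ (Kurtz), so by (4) it is attractive, hence not dispersive, hence not AEWG. Directly: almost no set computes a weakly $2$-generic.

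The flaw is that each extension of $\Psi$ is a permanent commitment. After an injury, the next attempt must extend a new, longer string. Its witnessing computations may have larger uses and can be injured again. For a fixed use the approximation does settle, but nothing bounds the number of failed attempts on a positive-measure set of oracles.

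Lowness has to be used to stop $\Psi$ from committing to wrong computations infinitely often. \cite{MR4741732} does this with Robinson's trick: before committing, it asks $\emptyset'$ a $\Sigma^0_1(A)$ question, answered through the limit lemma and the recursion theorem. Section~3 of the paper replaces this with the certification process, based on a $\emptyset'$-computable function dominating all $A$-computable functions.
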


A set $W \subseteq 2^{< \omega}$ is {\emph{dense}} if for every string
$\sigma$, there is a string $\tau \in W$ extending $\sigma$. Given $n \geq 1$,
a set $A$ is {\emph{weakly n-generic}} if for every dense $\Sigma^0_n$ set $W
\subseteq 2^{< \omega}$ there is $m$ such that $A \upharpoonright m \in W$
(sometimes we write $A \in [W]$). The definition can be relativized to any
oracle. For example, a set $A$ is {\emph{weakly 1-$Z$-generic}} if for
every dense $\Sigma^{0}_1(Z)$ set $W \subseteq 2^{< \omega}$ there is $m$ such
that $A \upharpoonright m \in W$.

\begin{definition}
  A set $B$ is {\emph{AEWG}} if almost every set computes a set
  that is weakly 1-$B$-generic. A degree $\mathbf{b}$ is AEWG if there is an AEWG
  set $B \in\mathbf{b}$.
\end{definition}

It was pointed out in \cite{MR4741732} that for a degree, being AEWG implies being dispersive. But it is still unknown whether being dispersive implies being AEWG even within the r.e. degrees.
 \begin{question}[{\cite[part of Question 9.2]{MR4741732}}]\label{q1}
     Can a set $A$ be dispersive without it being AEWG?
 \end{question}
 
 In fact, Theorem \ref{1}(1) and (2) were derived as a corollary from the following theorem.

\begin{theorem}[\cite{MR4741732}]\label{i}
  There is an AEWG high r.e. set and every $\text{low}$ r.e. set $A$ is AEWG.
\end{theorem}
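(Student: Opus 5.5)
The plan is to prove both halves of the statement through a single uniform template: a Turing functional $\Phi$, built together with (or from) $A$, such that $\Phi^X$ is total and weakly 1-$A$-generic for all $X$ outside a set of measure at most $\epsilon$. Running this for every $\epsilon=2^{-k}$ (or letting the requirements carry weights $2^{-e}$) shows that almost every $X$ computes a weakly 1-$A$-generic.

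The template is Kurtz's proof that almost every set computes a 1-generic, relativized to $A$. In Kurtz's argument the random oracle $X$ is read in blocks. For each $e$, the block for $e$ selects a "waiting time" or candidate extension. The computation of $G=\Phi^X$ then extends its current initial segment $\sigma$ into $W_e$ when an extension appears within the chosen bound. The events where the guess is wrong are arranged to have measure at most $2^{-e}$. In the relativized setting $W_e$ becomes the $\Sigma^0_1(A)$ set $W_e^A$. Since $X$ alone (without $A$) must do the computing, $\Phi$ may use only a computable approximation $A_s$ to $A$. Consequently an extension $\tau\in W^A_{e}$ found at stage $s$ may later be invalidated when $A$ changes below the use.

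For the low case, take $A$ low and r.e. By the Limit Lemma, $A$ has a computable approximation, and lowness gives more. The question whether an enumerated axiom $\tau\in W_e^{A_s}$ is $A$-correct is $\Delta_2$. Moreover, the question whether $W_e^A$ is dense is $\Pi^0_2(A)$, hence $\Pi^0_2$. So these questions have computable approximations with finitely many mind changes, and the number of mind changes attached to $e$ is bounded, along almost all of them, in a way that can be charged against measure. The construction of $\Phi$ is then as follows.
\begin{enumerate}
\item For requirement $e$, $\Phi^X$ uses the $e$th block of $X$ to choose a number $n$ of tolerated changes, distributed so that $\Pr(\text{too few})\le 2^{-e}$.
\item $\Phi^X$ waits for an extension of the current string that looks $A$-correct.
\item $\Phi^X$ commits to that extension only when the approximation has stabilized in the sense dictated by the chosen $n$.
\end{enumerate}
The key verification is that for each $X$ outside an $\epsilon$-small set, every dense $W_e^A$ is met by a genuinely $A$-correct extension, and $\Phi^X$ is total. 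Totality for non-dense $W_e^A$ needs separate care. I would handle it as Kurtz does, by extending anyway after a random timeout, since weak genericity only requires meeting dense sets.

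For the high case, I would build an r.e. $A$ directly. Highness comes from the usual thickness or $\Pi_2$-coding requirements, making $A^{[e]}$ $=^*$ $\omega^{[e]}$ or finite according as $e\in\mathrm{Tot}$. These are interleaved on a tree with genericity requirements $G_e$, which say that $\Phi^X$ meets $W_e^A$ for all $X$ outside measure $2^{-e}$. An enumeration into $A$ below the use of a certified extension damages exactly the $X$ that used that extension. Therefore each coding requirement of lower priority is allowed to enumerate only when the total measure it injures, summed over its actions, is below its quota. Infinitary coding is achieved by delaying enumeration until permitted by measure, as in measure-guessing and "cost function" constructions.

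I expect the main obstacle to be this measure accounting in the high construction. Infinitely many enumerations are needed for highness, yet the damage to each $G_e$ must have finite measure. My first attempt would be to impose the cost function $c(x,s)=\mu\{X:\Phi^X_s$ used $A_s\upharpoonright x\}$ restricted to higher-priority $G$'s. I would then show that the coding nodes on the true path only need enumerations obeying this cost eventually, because each $G_e$ stabilizes its certified extensions for all but small measure.
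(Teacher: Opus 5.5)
\paragraph{The low-case mechanism does not work as stated, and it is where the proof would have to happen.} Your claim that the relevant questions have approximations with finitely many mind changes is false for density: for low $A$, ``$W_e^A$ is dense'' is $\Pi^0_2$, not $\Delta^0_2$. For the correctness of a candidate extension, the number of candidates invalidated by $A$-changes before an $A$-correct one appears is finite. However, it is not computably bounded, and it depends on the current string, hence on $X$. Under the natural reading of your steps (1)--(3), committing after the $n$-th change succeeds only when $n$ is exactly that number. If more changes are still coming, you commit to a string outside $W_e^A$; if fewer come, $\Phi^X$ waits forever. Since the count varies with the string and is not known in advance, no fixed computable distribution for $n$ can keep the failure probability below $2^{-e}$ in general, so the bound on $\Pr(\text{too few})$ controls nothing. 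The random timeout you propose for non-dense $W_e^A$ has the same defect in the dense case. Nothing in your sketch bounds the probability that the timeout expires before an $A$-correct extension of the current string is found.

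\paragraph{The idea you are missing is the cone-claiming module.} The paper only cites this theorem, but in Subsection \ref{2.2} it describes the module of Hirschfeldt, Jockusch and Schupp. Give each $Q_e$ its own parameter $k_e\ge 2$, and let it claim the cones $[\rho]$ with $|\rho|=k_e$ one at a time. It freezes $\Psi$ on the claimed cone until every current leaf there has an extension in $W_e^A$, then extends into $W_e^A$ on the whole cone and moves on. No timeout and no random guessing are needed. If $W_e^A$ is not dense, only the single currently claimed cone, of measure at most $2^{-k_e}$, is lost to partiality. So $\Psi$ is total on a class of measure at least $1/2$, and the Kolmogorov 0-1 law gives measure one. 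Such a node acts at most $2^{k_e}$ times between initializations, and an invalidated extension only forces a cone to be tried again, which costs time rather than measure. Keeping the extensions $A$-correct therefore becomes a finite-restraint problem.
\begin{itemize}
\item In the high case this is resolved on a tree. The node guesses the $\infty/f$ outcomes of higher-priority coding nodes and acts only when their enumerations cannot injure the computations it uses (compare the protection clause in Definition \ref{d6}). Lower-priority coding simply respects the finite restraint.
\item In the low case one uses Robinson's trick: before acting, consult the $\emptyset'$-approximation to suitable $\Sigma^0_1(A)$ questions, obtained via lowness and the recursion theorem.
\end{itemize}

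\paragraph{Your high-case plan leaves exactly this conflict unresolved.} You only constrain lower-priority coding. You say nothing about how a genericity requirement survives the infinitely many enumerations of a higher-priority coding requirement for some $i\in\mathrm{Tot}$. You also give no argument that the coding can put cofinitely much of $\omega^{[i]}$ into $A$ while obeying your cost function.
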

 
 Dobrinen and
Simpson {\cite{MR2078930}} defined a degree $\mathbf{a}$ to be {\emph{almost
everywhere dominating (a.e.d.)}} if for almost every set $B$, every
$B$-recursive function $f$ is dominated by an $\mathbf{a}$-recursive function
$g$, that is $(\exists m) (\forall n > m) [f (n) < g (n)]$. A set $A$ is {\emph{superhigh}} if $A'
\geq_{\text{tt}} \emptyset''$. Simpson
{\cite{MR2351944}} showed that every a.e.d degree is superhigh and there
exists an r.e. set which is superhigh and not a.e.d. See Nies
{\cite{MR2548883}} page 361-362 for a detailed diagram of the implications of
some upward (downward) closed properties for Turing degrees and r.e. degrees
including the properties mentioned above.

The following question was asked in \cite{MR4741732}: Do the notions of being attractive and being
a.e.d. coincide for r.e. degrees? Recently, Hirschfeldt and Royer \cite{HR} showed that if a degree is not diagonally non-recursive, then it is attractive iff it is a.e.d. and hence answered the question in the affirmative. However, their proof does not use the AEWG property, but rather works directly with Monin's notion of $2^{2^n}$-infinitely often equality. It is still unknown whether being dispersive implies being AEWG for an r.e. degree. So the following question is the next step towards Question \ref{q1}: \emph{If $A$ is an r.e. set which is not a.e.d., must $A$ be AEWG?}
In view of this, we obtain the following results.

\begin{theorem}
  \label{main}There is an AEWG superhigh r.e. set.
\end{theorem}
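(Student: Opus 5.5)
We will build an r.e. set $A$ by a $\mathbf{0}''$-tree (infinite injury) construction that meets two kinds of requirements at once: superhighness requirements, which code $\emptyset''$ into $A'$ by a truth-table reduction, and AEWG requirements, which make $A$ AEWG. For the AEWG side we adapt the strategy behind the high case of Theorem~\ref{i}. There we fix a uniform procedure $\Phi$ and use a random oracle $X$ to produce $G=\Phi^{X}$. We require that for every $e$, if $W_e^{A}$ (a $\Sigma^0_1(A)$ set of strings) is dense, then $G$ meets it for all $X$ outside a class of measure at most $2^{-e}$. Summing these bounds, almost every $X$ has $\Phi^X$ weakly 1-$A$-generic.

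\textbf{How $\Phi$ works.} $\Phi^X$ builds $G$ in blocks. At block $e$, using fresh bits of $X$, it picks a string and waits for an extension to enumerate into $W_e^{A_s}$ with $A$-use that has been ``certified.'' Certification comes from an approximation that $X$ can read off as a measure-theoretic guess. A given $X$ behaves badly only if the certified computation is later injured. We bound the measure of such $X$ by letting each $A$-change made by a lower-priority requirement injure at most a measure $2^{-n}$ of oracles. This is a Kurtz/Kučera-style budget argument: each time we enumerate into $A$ we pay the measure of the oracles whose current guesses are destroyed.

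\textbf{Superhighness.} We aim for $\emptyset''\le_{\text{tt}}A'$ via the standard construction of a superhigh r.e. set (as in Simpson's or Mohrherr's construction). Using a $\Pi^0_2$ approximation of $\emptyset''$, for each $n$ we make $n\in\emptyset''$ iff a certain $\Sigma^0_1(A)$ event happens, with a computable bound on the queries. In practice this means that when $n\in\emptyset''$ looks infinite, we enumerate markers into the columns $A^{[n]}$ infinitely often. The tt-condition requires the reduction to be total and bounded regardless of the outcome, so each coding strategy must get a computably bounded number of attempts, say $2^{n}$ versions. This is where the tension lies: coding needs infinitely many enumerations into $A$, while the AEWG requirements need measure preservation.

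\textbf{Combining on the tree.} We place everything on a priority tree. A node $\alpha$ working on density of $W_e^A$ guesses which coding requirements above it have infinitary outcomes, and it certifies an $A$-use only at $\alpha$-expansionary stages. Coding markers for the strategies below $\alpha$ are kept above the uses $\alpha$ certifies. Higher-priority coding at an infinitary outcome can injure $\alpha$'s certificates, but under $\alpha$'s correct guess that coding is handled by lifting its markers beyond $\alpha$'s uses from then on, so only finitely much measure is lost.

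\textbf{Main obstacle.} The hardest step is the measure count. The total measure of oracles $X$ whose $\Phi^X$-computation at block $e$ is injured infinitely often, or settles on a wrong guess, must stay below $2^{-e}$. The difficulty is that the superhighness coding injures infinitely often along the true path. My first attempt would be to make $\Phi^X$ itself guess the true path using bits of $X$, for example by following the $\mathbf{0}''$-tree outcome with the highest frequency of expansion. I would then show by a Borel--Cantelli argument that for almost every $X$, the guessed node is eventually on the true path for each level. For such $X$, density of $W_e^A$ then guarantees that $G$ eventually meets $W_e^A$.
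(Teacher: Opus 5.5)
There is a genuine gap, and it sits exactly at the step you call the main obstacle. First, if $\Phi^X$ at block $e$ \emph{waits} for an extension in $W_e^{A}$, then $\Phi^X$ is partial for every $X$ reaching block $e$ whenever $W_e^A=\varnothing$, and such $e$ always exist. Density cannot be decided by the construction. The missing idea is to confine each $Q_e$-strategy's waiting to one small region at a time. In the paper a $Q_e$-node $\alpha$ claims a clopen set $R(\rho,F)\subseteq[\rho]$ with $|\rho|=k_\alpha$ and freezes $\Psi$ only there, while $\Psi$ keeps growing by default everywhere else. The parameters $k_\alpha\ge 2$ are distinct and the claims are pairwise disjoint, so the claimed union has $\mu(C_s)\le 1/2$ at every stage. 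Fatou's lemma then gives $\mu(\liminf_s C_s)\le 1/2$, so $\Psi$ is total on a class of measure at least $1/2$, and the $0$-$1$ law lifts this to measure one. Note that summing your bounds $2^{-e}$ does not give ``almost every''; at best it gives a positive-measure class, so the $0$-$1$ law is needed anyway.

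Second, you charge measure to injuries. Because the superhigh coding enumerates into $A$ infinitely often along the true path, this leads you to let $\Phi^X$ guess the true path from bits of $X$ via Borel--Cantelli, and that cannot work. The true path is a single object fixed by the construction and independent of $X$, so the bits of $X$ carry no information about it. Also, ``the outcome with the highest frequency of expansion'' is not an approximation that converges to the leftmost infinitely visited path. In the paper no measure is charged to injuries at all, and $Q_e$ holds for \emph{every} $X$ with $\Psi^X$ total. The guessing is done by the construction. After its last initialization, the true-path node re-claims every legal base afresh and extends $\Psi$ into $W_e^A$ only at $\alpha$-expansionary stages, whose protection clause makes the $A$-computations used permanent (Lemma~\ref{disposal-main}). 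Extensions made before that are simply redone.

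The second missing ingredient is how this protection coexists with the tt-coding. In the paper the bounded truth table is realized by Ng's boxes. Each node of length $2e+1$ in a finitely branching tree carries boxes $\eta_\infty^\gamma,\eta_f^\gamma$. A box is filled by an $A$-computation and cleared by enumerating its use into $A$, and $\sigma_e$ reads the level from right to left up to the first occupied box. Your plan to keep ``coding markers for the strategies below $\alpha$ above the uses $\alpha$ certifies'' is not available there. A box below $\alpha$ already filled with use at most $w^+$ can be removed only by clearing it, which is exactly the enumeration $\alpha$ must prevent. Waiting for it to go away may take forever, because boxes on the current path stay filled.

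Your treatment of higher-priority coding is roughly right: the paper waits until the $f$-boxes above $\alpha$ along $\infty$-outcomes and all boxes to the right have use greater than $w^+$ (clause (iv) of Definition~\ref{d6}). Small-use boxes below $\alpha$ are handled by the outcome jump of Subcase~1-2. The outcome of $\alpha$ moves from $c$ or $c+1$ to $c+2$, so each such box lies strictly left of every later path through $\alpha$. It is therefore never cleared again and is irrelevant to the right-to-left truth table (Lemma~\ref{extension-jump-main}). Since $\alpha$ makes at most $2^{k_\alpha}$ expansionary actions between initializations, the outcomes $0,\dots,2^{k_\alpha+1}+1$ suffice, which keeps each level finite and the $\sigma_e$ recursive. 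Without these two ingredients, namely measure lost only through frozen claims and protection via expansionary stages plus the outcome jump, your proposal does not yet give a proof.
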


\begin{theorem}
  \label{low2}Every $\text{low}_2$ r.e. set is AEWG.
\end{theorem}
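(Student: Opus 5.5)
The plan is to use the standard domination characterization of $\text{low}_2$ for $\Delta^0_2$ sets, and to replace the oracle $A$ by its recursive enumeration read at a stage supplied by a function computed from $X$. Fix a $\text{low}_2$ r.e. set $A$ with recursive enumeration $\{A_s\}$. Since $A$ is $\Delta^0_2$ and $A''\le_T\emptyset''$, there is a $\emptyset'$-recursive function $h$ that dominates every $A$-recursive function. I will then show that almost every $X$ computes a weakly 1-$A$-generic $G$, built by a recursive-in-$X$ procedure that queries $A_{g(n)}$ for an $X$-recursive $g$ which infinitely often escapes $h$ in the required sense.

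First I define the $A$-recursive ``settling'' function. For a string $\sigma$, coded as a number, let $q(\sigma)$ be computed from $A$ as follows. For each $e\le|\sigma|$, search for a $\tau_e\succeq\sigma$ and a stage $t$ with $\tau_e\in W_e^{A}[t]$ such that $A_t$ agrees with $A$ below the use, provided $W_e^A$ is dense (density itself is not $A$-decidable). To make $q$ total, search simultaneously for the first $e$, in a fixed order, for which such a $\tau_e$ exists, and record the stage. More robustly, let $q(\sigma)$ be the maximum over $e\le|\sigma|$ of the stage at which an extension of $\sigma$ enters $W_e^A$ with $A$-correct use, taking $0$ when no extension exists. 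For dense $W_e^A$ an extension always exists, so only those $e$ matter. The function $q$ is $A$-recursive, hence dominated by $h$, and I may assume $h$ is increasing in the string code.

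The construction of $G\le_T X$ runs as follows. At step $n$ the current string is $\sigma_n$. We compute $s=g(n)$, and for the least $e\le|\sigma_n|$ not yet declared satisfied we look for $\tau\succeq\sigma_n$ with $\tau\in W_e^{A_s}[s]$. If one is found, we set $\sigma_{n+1}=\tau$, declare $e$ satisfied, and otherwise extend by one bit. Since $A$ is r.e., an $A_s$-computation that is found may be fake. Fake satisfactions are therefore revisited: $e$ is declared satisfied only tentatively, and at later steps we recheck with larger stages whether the use has changed, reopening $e$ if it has. Any $\tau$ found with $A_s$ correct on the use is genuinely in $W_e^A$, and then $G$ meets $W_e^A$ permanently. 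So it suffices that for each $e$ with $W_e^A$ dense there are infinitely many steps $n$ at which $e$ is attended and $g(n)\ge h(\sigma_n)\ge q(\sigma_n)$. At such a step the true extension with correct use is visible at stage $g(n)$.

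The main obstacle is the measure step: almost every $X$ must compute a $g$ escaping $h$ along the specific sequence $\sigma_n$, and $\sigma_n$ depends on $g$ itself. I would handle this by replacing $h$ by $\hat h(n)=\max\{h(\sigma):|\sigma|\le \ell(n)\}$, where $\ell(n)$ is a recursive bound on $|\sigma_n|$. Such a bound can be imposed by letting step $n$ only search among $\tau$ of length at most $n$ and otherwise postponing. Density still guarantees that an extension of bounded length exists eventually, and this is folded into $q$ by letting the search bound grow. Then $\hat h$ is still $\emptyset'$-recursive. It remains to cite or prove that almost every $X$ computes a $g$ with $g(n)>\hat h(n)$ for infinitely many $n$ in each block of a recursive partition used to interleave the $e$'s. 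For this I would use Kurtz's argument: from $X$, let $g(n)$ be large when $X$ has a long block of zeros at a position coded by $n$, and use a Borel--Cantelli estimate to show that the set of $X$ for which $g$ is eventually dominated by $\hat h$ on some block has measure $0$. This holds for each fixed countable family of $\emptyset'$-recursive functions, since there are only countably many. Making this interleaving and the reopening of fake satisfactions compatible is the part I expect to need the most care.
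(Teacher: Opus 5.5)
There is a genuine gap, and it is in the step you yourself flag as the main obstacle: the measure lemma you plan to prove is false. You need that, for a fixed $\emptyset'$-recursive $\hat h$, almost every $X$ computes a $g$ with $g(n)>\hat h(n)$ for infinitely many $n$ in each block.

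This fails because $\mathbf{0}'$ is uniformly almost everywhere dominating (Kurtz). That is, there is a single $F\le_T\emptyset'$ such that for almost every $X$, every total $X$-recursive function is eventually below $F$. To see this, note that $\emptyset'$ computes the left-c.e.\ reals $\mu\{X:\Phi_e^X(n)\downarrow\}$. Hence $\emptyset'$ computes stages $t_{e,n}$ after which the computations $\Phi_e^X(n)$ still to converge have total measure at most $2^{-n}$. Let $F(n)$ exceed every output $\Phi_e^\sigma(n)[t_{e,n}]$ with $e\le n$, and apply Borel--Cantelli for each $e$.

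Nothing in your argument prevents $\hat h\ge F$: replacing $h$ by a larger $\emptyset'$-recursive function keeps every property you use. Once $\hat h\ge F$, the set of $X$ computing a function that escapes $\hat h$ infinitely often is null. Kurtz-type escaping arguments beat recursive bounds, not $\Delta^0_2$ ones. The appeal to countability does not help, since the statement already fails for a single function. Aiming to escape $h$, which dominates all $A$-recursive functions, is aiming too high; you only need to escape the settling functions themselves.

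Two smaller points. First, $q$ as you define it, with value $0$ when no extension exists, is not $A$-recursive, so you need a separate $q_e$ for each dense $W_e^A$. Second, at an escaping step you must select the extension of least use; otherwise the search can still return a fake computation.

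The escape that is actually available is escape from $A$-recursive functions. A low$_2$ set is not high, hence not a.e.\ dominating, since a.e.\ domination implies superhighness. So by the $0$-$1$ law almost every $X$ computes a function dominated by no $A$-recursive function. But then your plan would use low$_2$ only through non-a.e.\ domination, and everything hinges on the part you defer. There are two problems there:

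\begin{enumerate}
\item The escape points must land on steps where $e$ is attended, yet $\sigma_n$ depends on earlier values of $g$. A recursive cap on $|\sigma_n|$ conflicts with density, because the length of the required extensions is only $A$-computable.
\item If attention is assigned by priority instead, fake $A_s$-computations for a higher-priority, possibly non-dense $W_{e'}^A$ can be found, refuted and found again. This can pre-empt $e$ at every escape point.
\end{enumerate}

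Carrying this out from escape alone would show that every r.e.\ set which is not a.e.\ dominating is AEWG. The paper explicitly leaves that question open.

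The paper exploits low$_2$ quite differently. $X$ serves only as the oracle of a single recursive functional $\Psi$ built by a priority construction. The claims on which $\Psi$ is frozen have total measure at most $1/2$, and the $0$-$1$ law then gives measure one. Fake computations are handled by guarding each attempt with $A_s$ beyond the uses of its stored computations. The $\Delta^0_2$ dominating function $f$, with its nondecreasing approximation, enters through the Cholak--Downey--Greenberg certification process. An attempt at coordinate $x$ issued at stage $t$ carries value $f_t(x)+1$ and is acted on only once $f_s(x)$ exceeds that value. If every worker kept a fixed $A$-compatible attempt forever, these attempts would make the $A$-partial ordered search $\Theta^A$ total. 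Domination by $f$ then forces a certification.
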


The two
proofs of are very different in the way of arranging the
tree of strategies.

Ng {\cite{MR2387945}} constructed an r.e. minimal pair of superhigh degrees (independently by Shore (unpublished)). We follow the framework in \cite{MR2387945} for constructing a superhigh r.e.
set in Theorem \ref{main} in Section 2.

In the proof of Theorem \ref{low2} in Section 3 we use the ``certification process''
developed recently by Cholak, Downey and Greenberg {\cite{CDG}}. 

We assume the familiarity with recursion theory as in {\cite{MR882921}} and early chapters in
{\cite{MR2732288}}, for instance. We adopt the convention of using uppercase
Greek letters for functionals, and lowercase Greek letters for their use. We
append $[s]$ to functionals or their use to
describe their values at stage $s$.

\section{Proof of Theorem \ref{main}}
\subsection{Requirements}\label{requ}
To show that the r.e. set $A$ to be constructed is superhigh, it is enough to
build a truth-table reduction $A' \geq_{\mathrm{tt}} \mathrm{Tot}
   :=\{e:\Phi_e \text{ is total}\}$.
We divide this reduction into positive requirements $P_e$: $ e\in\mathrm{Tot}\Longleftrightarrow A'\vDash \sigma_e$,
where the truth tables $\sigma_e$ are defined uniformly in the
construction.  We also build a functional $\Psi$ and meet the negative
requirements
\[
Q_e:\quad W_e^A\text{ dense }\Rightarrow
(\forall X)\bigl(\Psi^X\text{ total }\rightarrow
       (\exists n)\,\Psi^X\upharpoonright n\in W_e^A\bigr).
\]
Finally we meet the global requirement $M$: the set of oracles $X$ such that $\Psi^X$ is total has positive measure. If all $Q_e$ and $M$ are satisfied, then on a positive-measure class the value
$\Psi^X$ is weakly $1$-$A$-generic.  The class of oracles computing a weakly
$1$-$A$-generic is Turing invariant, so the Kolmogorov 0-1 law raises this
positive measure to measure one.

\subsection{The basic module}\label{2.2}
The usual basic module for $Q_e$ as in \cite{MR4741732} fixes a parameter $k=k_e$ and tries, one by one, cones
$[\rho]$ with $|\rho|=k$.  While a cone is claimed, the construction keeps
$\Psi$ constant along that cone until all $\Psi$-values of current leaves of the cone have
extensions in $W_e^A$; at that point it extends $\Psi$
into $W_e^A$ on the whole claimed piece and moves to the next cone.

The point of the present version is that a higher-priority node may already
claim some proper subcones of $[\rho]$.  If these subcones do not cover
$[\rho]$, the node for $Q_e$ claims the clopen remainder.  Thus a claim is not
just a string, but a pair $(\rho,F)$,
where $|\rho|=k$, $F$ is a finite prefix-free set of proper extensions of
$\rho$, and $R(\rho,F):=[\rho]\setminus\bigcup_{\xi\in F}[\xi] \neq \emptyset$.
We call $\rho$ the \emph{base} of the claim and $R(\rho,F)$ the
\emph{claimed clopen set}.  At a later stage $s$, the module waits until every
string $\tau$ of length $s-1$ with $[\tau]\subseteq R(\rho,F)$ has an extension
of $\Psi^\tau$ in $W_e^A$.  It then defines $\Psi^{\tau 0}=\Psi^{\tau 1}$ to be
such an extension, for all these $\tau$, and drops the claim.  The excluded
subcones are left to the higher-priority nodes which already claimed them. If
there is no such a stage $s$, we may believe that $W_e^A$ is not dense. And if we carefully define the parameters $k_e$, the waste of measure in the case that there is no such a stage $s$ is tame, so we meet requirement $M$.

\subsection{The tree}
As usual, our construction is performed on a tree $T$ of
strategies. Nodes on $T$ represent guesses about
the eventual behaviour of some aspects of the construction, and use their
guesses to meet a requirement they are assigned to. We will define $T$ to be a finitely branching recursive tree (but not the binary tree). The
outcomes at odd levels are $\infty$ and $f$, ordered $\infty<_{\mathrm{left}}f$.
The outcomes at even levels are finite initial segments of $\omega$, ordered in
the usual way.  We use the notation $\alpha<_{\mathrm{left}}\beta$ and
$\alpha<_L\beta$ in the standard sense: $\alpha<_L\beta$ means that $\alpha$ is
either strictly to the left of $\beta$ or is a proper initial segment of
$\beta$.
Nodes of length $2e+1$ are \emph{odd nodes} and are assigned to $P_e$.  Each odd node
$\gamma$ has two \emph{boxes} (see subsection \ref{not}), denoted by $\eta_\infty^\gamma$ and $\eta_f^\gamma$.
Nodes of length $2e$ are \emph{even nodes} and are assigned to $Q_e$. Each even node
$\alpha$ has a \emph{parameter} $k_\alpha>1$ and at any stage has at most one current
claim $(\rho,F)$ and $|\rho|=k_\alpha$. A node $\alpha$ is said to be an {\emph{$R$-node}}, if it is assigned to the
requirement $R$.

The parameters and even outcomes are defined recursively as follows.  Put
$k_\lambda=2$.  Suppose the parameter has been defined for all even nodes of
length at most $2n$, and let $\sigma_0,\ldots,\sigma_{m-1}$ be the nodes of
length $2n$.  The outcomes of $\sigma_i$ are $0,1,2,\ldots, 2^{k_{\sigma_i}+1}+1$,
where even outcomes correspond to expansionary action and odd outcomes to
non-expansionary waiting.  This defines level $2n+1$.  The nodes of length
$2n+2$ have the form $\sigma x$, where $|\sigma|=2n+1$ and
$x\in\{\infty,f\}$.  List them from left to right as
$\tau_0,\ldots,\tau_j$.  Choose the least $c>2$ which is not the parameter of
any shorter even node and set $k_{\tau_i}=c+i$.

At each stage $s$ we define an approximation to the true path, $\delta_s$ of length $s$. A node $\alpha$ is {\emph{visited}} at stage $s$, if
$\alpha \subseteq \delta_s$. The leftmost path on $T$ such that every initial segment
is visited infinitely often is the {\emph{true path}}. Since $T$ is finitely branching, the true path exists. At each stage $s$
we will define $\Psi^{\tau}$ for all $\tau \in 2^s$.

\subsection{Boxes and the truth table}\label{not} The concepts in this subsection originate from Ng {\cite{MR2387945}}. We define an auxiliary
functional $\Theta$ in the construction. Fix a partial recursive 1-1 function $(\gamma,x)\mapsto \eta_x^{\gamma}$ defined for $\gamma$ odd and $x \in \{\infty,f\}$ such that $\Theta^A (\gamma,x) \downarrow$ if and only if $A' (\eta_x^{\gamma}) = 1$.
We will not use the function $(\gamma,x)\mapsto \eta_x^{\gamma}$ in the construction. $\eta_x^{\gamma}$ is just the name of a box by the follows.

\begin{definition}
  Given $s$, an odd $\gamma \in T$ and $x \in \{ \infty, f \}$, we say that at stage $s$:
\begin{enumerate}[label=(\roman*)]
    \item The box $\eta_x^{\gamma}$ is {\emph{empty}} if $\Theta^{A} (\gamma,x)[s] \uparrow$.
    \item To {\emph{fill}} the empty box $\eta^{\gamma}_x$ with use $u$ means
  that we define $\Theta^{A \upharpoonright u + 1} (\gamma,x)[s] \downarrow$ with use
  $u$.
  \item To {\emph{clear}} the box $\eta^{\gamma}_x$ which has already been
  filled with use $u$ at some stage $t<s$ such that $A_t \upharpoonright u + 1 
  = A_s \upharpoonright u + 1$ means that we enumerate $u$ into $A$. To clear an empty box means doing nothing.
\end{enumerate}
\end{definition}

Now fix $e$ and list the nodes of length $2e+1$ from left to right as $\gamma_0<_{\mathrm{left}}\cdots
   <_{\mathrm{left}}\gamma_n$.
Put $\rho_i=\gamma_{n-i}$.  The truth table $\sigma_e$ reads this level in the order $\eta_f^{\rho_0},\eta_\infty^{\rho_0},
  \eta_f^{\rho_1},\eta_\infty^{\rho_1},\ldots,
  \eta_f^{\rho_n},\eta_\infty^{\rho_n}$.
It ignores everything after the first occupied box.  Its value is $1$ exactly
when this first occupied box is an $\eta_\infty$-box.  Equivalently,
\[
\sigma_e=
\bigvee_{i\le n}
  \left(
    A'(\eta_\infty^{\rho_i})\wedge
    \neg A'(\eta_f^{\rho_i})\wedge
    \bigwedge_{j<i}
      \bigl(\neg A'(\eta_f^{\rho_j})\wedge
             \neg A'(\eta_\infty^{\rho_j})\bigr)
  \right).
\]
Since the tree is recursive and each level is finite, the sequence
$\{\sigma_e\}_{e\in\omega}$ is recursive.

\subsection{Legal residual claims and expansionary stages}
At a stage, if an even node $\beta$ has a current claim, let $b(\beta)$ denote
its base.  For an even node $\alpha$ and a string $\rho$ of length
$k_\alpha$, let
\[
   H_s(\alpha)=\{b(\beta):\beta<_L\alpha\text{ and }\beta
          \text{ has a current claim at stage }s\}.
\]
We say that $\rho$ is \emph{legal for $\alpha$ at stage $s$} if no member of
$H_s(\alpha)$ is an initial segment of $\rho$, and if the prefix-minimal
members of $\{\xi\in H_s(\alpha):\rho\subset \xi\}$
do not cover $[\rho]$.  If $\rho$ is legal, let $F_s(\alpha,\rho)$ be this
finite prefix-minimal set.  Then $R_s(\alpha,
\rho)=[\rho]\setminus
       \bigcup_{\xi\in F_s(\alpha,
\rho)}[\xi]$
is the clopen remainder which $\alpha$ may claim.  When $\alpha$ claims
$\rho$, it records the pair $(\rho,F_s(\alpha,
\rho))$; the recorded set $F$ is
not changed until the claim is dropped or $\alpha$ is initialized.
If $C=R(\rho,F)$ is a current claim, say
that $\tau$ is \emph{inside} $C$ if $[\tau]\subseteq C$. 

\begin{definition}
    Each even node $\alpha$ carries a \emph{restraint} $r_\alpha$,
initialized to $-1$.  After the last initialization of $\alpha$,
$r_\alpha$ is the maximum of the oracle uses of all $W_e^A$-enumerations which
have been used by $\alpha$ in successful extensions of $\Psi$.
\end{definition}

\begin{definition}\label{d6}
    Let $\alpha$ be an even node of length $2e$.  A stage $s>0$
is \emph{$\alpha$-expansionary} if:

\begin{enumerate}[label=(\roman*)]
\item $\alpha$ is visited at stage $s$;
\item $\alpha$ has a current claim $C=R(\rho,F)$ and
      $s-1\geq \max\{ |\xi|:\xi\in F\cup\{\rho\}\}$;
\item for every $\tau\in 2^{s-1}$ inside $C$, there is a string in
      $W_e^A[s]$ properly extending $\Psi^\tau$.  Among all stage-$s$
      computations witnessing such an extension, effectively choose one of least oracle
      use, and denote the extension
      by $\nu_\tau$;
\item if $w$ is the maximum of the oracle uses of the enumerations chosen in
      (iii), and $ w^+=\max\{w,r_\alpha\}$,
      then 
      \begin{enumerate}[label=(\alph*)]
      \item for every odd $\beta$ with $\beta\infty\subseteq\alpha$, the box
            $\eta_f^\beta$ is empty or has use greater than $w^+$;
      \item for every odd $\gamma>_{\mathrm{left}}\alpha$, both
            $\eta_f^\gamma$ and $\eta_\infty^\gamma$ are empty or have use
            greater than $w^+$.
      \end{enumerate}
\end{enumerate}
\end{definition}

\begin{definition}
    For an odd node $\beta$ of length $2e+1$, define $l(\beta,s)<s$ to be the maximal $y$ such that $\Phi_e(x)[s]\downarrow$ for all $x<y$.
A stage $s>0$ is \emph{$\beta$-expansionary} if $\beta$ is visited at $s$ and
$l(\beta,s)>l(\beta,s^-)$, where $s^-<s$ is the last stage at which
$\beta$ was visited. If $s$ is the first stage at which $\beta$ is visited, we assume $l(\beta,s^-)=0$ by convention.
\end{definition}

If $l (\beta, s)$ constantly increases, then we have reason
to believe that $\Phi_e$ is total. 

The key device in the construction is an \emph{outcome jump} at even nodes.
After the last initialization of an even node $\alpha$, its visits are
organized in adjacent pairs $c,c+1$, where $c$ is even: $c$ is the
expansionary outcome, while $c+1$ is the corresponding waiting outcome.
Suppose that, at an $\alpha$-expansionary stage, the computations which
$\alpha$ is about to preserve have use bound $w^+$, but a box already
occupied at an odd proper extension of $\alpha$ has use at most $w^+$.
Instead of returning to outcome $c$, the construction advances to the next
even outcome $c+2$; relative to the immediately preceding outcome this is a
jump of two from $c$, or of one from $c+1$. Every such box lies on an earlier
branch whose $\alpha$-outcome is below $c+2$, and the jump therefore places it
strictly to the left of the new path approximation. Unless a node to the left
of $\alpha$ is later visited or $\alpha$ is later initialized, all subsequent
outcomes at $\alpha$ are at least $c+2$, so this box can no longer be cleared
by either end-of-stage clearing operation. Any box filled after the jump
receives a fresh use greater than $w^+$ and is harmless to the protected
computations. Thus the jump disposes of every existing small-use threat; the
only possible exceptions are precisely the higher-priority injuries excluded
once the true-path node has stabilized.

\subsection{The construction}
At stage $0$, define $\Psi^\lambda=\lambda$ and regard every even node as
\emph{initialized} for convenience (to
initialize an even node we mean we delete its claim if it has a current claim,
declare it to be not satisfied if it is currently satisfied, and set its restraint $r_\alpha$ to $-1$). 

At stage $s>0$, we inductively define $\delta_s$ of length $s$
and let the initial segments of $\delta_s$ to act. Suppose we have defined
$\alpha = \delta_s \upharpoonright 2 e$ for $2 e < s$. We allow the even nodes to act as soon as we see that it is on
$\delta_s$.

\textbf{Case 1.}  $\alpha$ was not initialized earlier at this stage, is not
satisfied, and $s$ is $\alpha$-expansionary.  Let 
$C=R(\rho,F)$ and $w^+$ be as in Definition \ref{d6}.

\textbf{Subcase 1-1.}  There is no odd $\gamma\supset\alpha$ and
$x\in\{\infty,f\}$ such that the box $\eta_x^\gamma$ has use at most $w^+$.
Proceed as follows.

\begin{enumerate}[label=(\arabic*)]
\item For every $\tau\in2^{s-1}$ inside $C$, choose $\nu_\tau$ as in Definition \ref{d6} and
      define $\Psi^{\tau0}=\Psi^{\tau1}=\nu_\tau$.  After these extensions have
      been made, set the restraint $r_\alpha=w^+ $.
  
\item Drop the claim $C$.  Then let $\alpha$ claim the
      $<_{\mathrm{left}}$-least string $\rho'$ of length $k_\alpha$ which is
      legal for $\alpha$ currently and has not been used as a base
      by $\alpha$ since its last initialization. If there is no
      such $\rho'$, declare $\alpha$ is \emph{satisfied}.
\item Initialize all even $\beta\supset\alpha$.
\item Let $c<2^{k_\alpha+1}+1$ be the even number such that, at the last visit to $\alpha$ since its last initialization (such a stage exists since $\alpha$ has a claim currently), the outcome of
      $\alpha$ was $c$ or $c+1$.  Define $\delta_s(2e)=c$.
\end{enumerate}

\textbf{Subcase 1-2.}  Some odd $\gamma\supset\alpha$ has a box
$\eta_x^\gamma$ of use at most $w^+$.  Perform steps (1)-(3) from Subcase 1-1.
Let $c$ be as in Subcase 1-1(4) and define $\delta_s(2e)=c+2$.  This
is always possible since $\alpha$ can have at most $2^{k_\alpha}$ expansionary
actions between two initializations.

\textbf{Case 2.}  $\alpha$ was not initialized earlier at this stage, is not
satisfied, has a current claim, and $s$ is not $\alpha$-expansionary.  Let $c$ be as in Subcase 1-1(4) and define $\delta_s(2e)=c+1$.

\textbf{Case 3.}  $\alpha$ was not initialized earlier at this stage, is not
satisfied, and has no current claim (this could happen if $s$ is
the first stage at which $\alpha$ is visited since the last stage at which
$\alpha$ was initialized).  Let $\alpha$ claim the
$<_{\mathrm{left}}$-least string $\rho$ of length $k_\alpha$ which is legal for
$\alpha$ at this stage and has not been used as a base by $\alpha$ since the
last initialization of $\alpha$.  If no such $\rho$ exists, declare $\alpha$
satisfied.  Initialize all even $\beta\supset\alpha$, 
and define $\delta_s(2e)=1$ if a claim was made
and $\delta_s(2e)=0$ otherwise.

\textbf{Case 4.}  $\alpha$ was not initialized earlier at this stage and is
satisfied.  If $\alpha$ became satisfied in Case 3 without making a claim, define the outcome to be 0.
Otherwise let $c$ be as in Subcase 1-1(4) and define $\delta_s(2e)=c$.

\textbf{Case 5.}  $\alpha$ was initialized earlier at this stage.  Define
$\delta_s(2e)=1$.

We say that an even node \emph{acts} at stage $s$ if Case 1 or Case 3 holds.

Now suppose $\beta=\delta_s\upharpoonright(2e+1)$ has been defined and
$2e+1<s$.  Define $\delta_s(2e+1)=\infty$ if $s$ is
$\beta$-expansionary, and define $\delta_s(2e+1)=f$ otherwise.
After $\delta_s$ of length $s$ has been defined, perform the following end-of-stage actions.

\begin{enumerate}[label=(\roman*)]
\item Initialize all even $\beta>_{\mathrm{left}}\delta_s$.
\item For every odd $\gamma>_{\mathrm{left}}\delta_s$, clear
      $\eta_\infty^\gamma$ and $\eta_f^\gamma$.
\item For every odd $\alpha$ such that $\alpha\infty\subseteq\delta_s$, clear $\eta_f^\alpha$.
\item For every odd $\alpha\subset\delta_s$, in increasing order of length,
      fill $\eta_{\delta_s(|\alpha|)}^\alpha$ if it is empty, with the use a fresh
      number larger than every number which has appeared so far.
\item For each $\mu\in2^s$ for which $\Psi^\mu$ has not yet been defined, if
      $[\mu]$ is contained in one of the current claimed clopen sets, define $\Psi^\mu=\Psi^{\mu\upharpoonright(s-1)}$. Otherwise define $\Psi^\mu=\Psi^{\mu\upharpoonright(s-1)}0$.
    
\end{enumerate}

\subsection{Verification}

\begin{lemma}\label{claims-antichain-main}
At the end of every stage, the bases of distinct current claims have pairwise
distinct lengths, and the clopen sets currently claimed by even nodes are pairwise
disjoint.
\end{lemma}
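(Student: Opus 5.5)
We prove both assertions at once by induction on stages, with the invariant that at the end of each stage the current claims are pairwise disjoint and have bases of pairwise distinct lengths. At stage $0$ there are no claims. Within a stage, claims are only created in Case 1(2) and Case 3, and only destroyed by dropping a claim or by initialization. Deleting a claim trivially preserves the invariant, so it suffices to check each creation of a claim by a node $\alpha$ against every claim existing at that moment.

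\emph{Distinct lengths.} The base of a claim by $\alpha$ has length $k_\alpha$, so it is enough that no other node with a current claim has parameter $k_\alpha$. By the recursive definition of parameters, distinct even nodes of the same length have distinct parameters $c+i$. Nodes of different lengths also get distinct parameters: at each new level the least $c$ is chosen larger than every parameter of a shorter node. Here I would check that the values $c,\dots,c+j$ at a given level exceed all earlier parameters, which holds because $c$ is chosen above the previous block. So $\alpha\mapsto k_\alpha$ is injective, and since each node holds at most one claim, the bases of distinct current claims have distinct lengths.

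\emph{Disjointness.} Suppose $\alpha$ claims $(\rho,F)$ with $F=F_s(\alpha,\rho)$ at stage $s$, and let $\beta\ne\alpha$ hold a current claim $R(\rho_\beta,F_\beta)$. We split into two cases.

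\emph{Case $\beta<_L\alpha$.} Then $\rho_\beta\in H_s(\alpha)$. By legality, $\rho_\beta\not\subseteq\rho$. If $\rho\not\subset\rho_\beta$, the cones $[\rho]$ and $[\rho_\beta]$ are already disjoint. Otherwise $\rho\subset\rho_\beta$, so $[\rho_\beta]\subseteq[\xi]$ for some prefix-minimal $\xi\in F$, and hence $R(\rho,F)\cap[\rho_\beta]=\emptyset$.

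\emph{Case $\alpha<_L\beta$.} We show no such $\beta$ has a current claim when $\alpha$ acts.
\begin{itemize}
\item If $\beta\supset\alpha$, the action of $\alpha$ in Case 1(3) or Case 3 initializes $\beta$. Claims made later in the stage by extensions of $\alpha$ fall under the first case relative to $\alpha$.
\item If $\beta$ lies to the right of $\alpha$, then $\alpha$ is visited at $s$. Since $\alpha\subseteq\delta_s$ and $\beta>_{\mathrm{left}}\alpha$, the end-of-stage action (i) initializes $\beta$. But this happens only at the end of the stage, so I need the stronger fact that $\beta$ had no claim at the moment $\alpha$ acted. Suppose $\beta$ made a claim at some stage $t\le s$ and was not initialized since. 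Then $\beta\subseteq\delta_t$, and at the end of stage $t$ every node to the right of $\delta_t$ is initialized.
\end{itemize}
So I would separately prove the auxiliary invariant: \emph{if $\beta$ has a current claim at the end of stage $t$, then $\beta$ is to the left of or extends some node of $\delta_t$; more precisely, every node to the right of $\beta$ holds no claim.} This follows from end-of-stage initialization (i) together with the initialization of extensions in Cases 1(3) and 3. With it, whenever $\alpha$ acts at $s$, every node with a claim that is right of $\alpha$ must have been visited after $\alpha$'s last... Cleaner still is to argue directly. Any $\beta>_{\mathrm{left}}\alpha$ holding a claim at $s$ acquired it at a stage $t<s$ at which $\beta\subseteq\delta_t$. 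Some stage $t'$ with $t<t'\le s$ had $\delta_{t'}$ passing through or left of $\alpha$. Indeed, $\alpha$ being visited at $s$ means that at the end of $s-1$... The point is that at stage $s$ itself, before $\alpha$ acts, nothing has yet initialized $\beta$.

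\emph{Main obstacle.} The hard part is exactly this last case: a node $\beta$ to the right of $\alpha$ may still hold a claim during stage $s$ when $\alpha$ acts, because action (i) runs only at the end of the stage. I would try first to show it is harmless. At the moment of $\alpha$'s action, $\beta$'s claim is removed before the stage ends, and $\Psi$ on level $s$ is defined in (v) only after the initialization. So the invariant holds at the end of every stage, as the lemma states, even if claims briefly overlap mid-stage. Thus for $\beta>_{\mathrm{left}}\alpha$ the lemma follows from (i), and for $\beta\supset\alpha$ from Case 1(3) or 3. The first case ($\beta<_L\alpha$) is the substantive one, handled by legality.
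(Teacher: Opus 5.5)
Your argument is correct and is essentially the paper's proof: injectivity of $\alpha\mapsto k_\alpha$ gives distinct base lengths, and a new claim of $\alpha$ need only be compared, via legality, with claims of nodes $\beta<_L\alpha$, because extensions of $\alpha$ are initialized at once and nodes to the right of $\alpha$ are initialized by end-of-stage action (i). Do delete the abandoned detour, though: the intermediate goal that no $\beta>_{\mathrm{left}}\alpha$ holds a claim when $\alpha$ acts, and the proposed auxiliary invariant that no node to the right of a claim-holder holds a claim, are both false (nodes left of the current path keep their claims while nodes to their right claim later), and your final end-of-stage argument uses neither.
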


\begin{proof}
The recursive assignment gives distinct parameters to distinct
even nodes. Since a node only uses bases of length
equal to its parameter, the base lengths of distinct current claims are
pairwise distinct.
For disjointness, consider the stage at which an even node $\alpha$ makes a new
claim with base $\rho$ and exclusion set $F$.  Any node extending $\alpha$ is
initialized immediately, and any node to the right of the final approximation
is initialized at the end of the stage.  Thus it is enough to compare the new
claim with claims of nodes $\beta<_L\alpha$.

Let $\eta=b(\beta)$.  If $\eta$ is an initial segment of $\rho$, then $\rho$ is
not legal, so this case cannot occur.  If $\rho$ is an initial segment of
$\eta$, then either $\eta$ is itself a member of $F$, or it extends a
prefix-minimal member of $F$; in either case the new claimed clopen set excludes
$[\eta]$ and hence is disjoint from the claim of $\beta$.  If $\rho$ and
$\eta$ are incomparable, the corresponding cylinders are already disjoint.
\end{proof}

\begin{lemma}\label{box-nesting-main}
Suppose that at the end of a stage a box attached to an odd node $\gamma$ is
nonempty.  If $\rho$ is an odd initial segment of $\gamma$, and
$\rho x\subseteq\gamma$ for $x\in\{\infty,f\}$, then the box $\eta_x^\rho$ is
nonempty and its use is smaller than the use of the box at $\gamma$.
\end{lemma}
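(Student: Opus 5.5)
We argue by induction on stages $s$, showing that the property holds at the end of every stage. Formally, the invariant is: for every nonempty box $\eta_y^\gamma$ and every odd $\rho\subset\gamma$ with $\rho x\subseteq\gamma$, the box $\eta_x^\rho$ is nonempty with strictly smaller use. At stage $0$ every box is empty, so the invariant holds vacuously. Assume it holds at the end of stage $s-1$. The only operations at stage $s$ that change boxes are the end-of-stage actions (ii)--(iv) and enumerations into $A$ by clearing; even nodes never touch boxes. One caveat: a box filled with use $u$ becomes empty as soon as some number $\le u$ enters $A$, not only when it is explicitly cleared. So "empty" must be read through $\Theta^A[s]$, and we have to control all enumerations into $A$ at stage $s$.

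\textbf{Step 1 (clearing preserves the invariant).} Let $\eta_y^\gamma$ be nonempty at the end of stage $s$ and not newly filled at $s$. Then it was nonempty at the end of stage $s-1$ with the same use $u$, and no number $\le u$ entered $A$ at stage $s$. Fix $\rho x\subseteq\gamma$. By induction, $\eta_x^\rho$ was nonempty at the end of $s-1$ with some use $u'<u$. If it had been emptied at stage $s$, some number $\le u'<u$ would have entered $A$, contradicting the survival of $\eta_y^\gamma$. Any refill of $\eta_x^\rho$ in (iv) would happen only if it were empty, so its use is still $u'<u$.

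\textbf{Step 2 (newly filled boxes).} A box filled at stage $s$ is $\eta_{\delta_s(|\gamma|)}^\gamma$ for some odd $\gamma\subset\delta_s$. Let $\rho\subset\gamma$ be odd with $\rho x\subseteq\gamma$. Then $x=\delta_s(|\rho|)$, so $\eta_x^\rho$ is exactly the box that (iv) treats at $\rho$. Since (iv) proceeds in increasing order of length, $\rho$ is handled before $\gamma$. Hence $\eta_x^\rho$ is nonempty when $\gamma$ is handled: either it was already filled, or it has just been filled. In both cases its use is smaller than the fresh number chosen for $\gamma$, which exceeds every number seen so far. The remaining point is that (iv) enumerates nothing into $A$, so $\eta_x^\rho$ is still filled at the end of the stage.

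\textbf{Step 3 (the main obstacle).} The delicate case is a surviving old box $\eta_y^\gamma$ whose ancestor box $\eta_x^\rho$ is cleared by (ii) or (iii). We claim that whenever such an ancestor is cleared, $\eta_y^\gamma$ is cleared too, or at least emptied; by Step 1 this suffices.

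For (ii): if $\rho>_{\mathrm{left}}\delta_s$, then $\gamma\supseteq\rho$ is also left of $\delta_s$ in the sense $>_{\mathrm{left}}$, so its boxes are cleared as well.

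For (iii): if $\rho\infty\subseteq\delta_s$ and $x=f$, then $\gamma\supseteq\rho f$ lies to the right of $\delta_s$, since $\infty<_{\mathrm{left}}f$. Hence (ii) clears both boxes of $\gamma$.

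Either way $\eta_y^\gamma$ is emptied at stage $s$, contradicting the assumption that it survived, so this case cannot occur. Combined with Steps 1 and 2, the invariant holds at the end of stage $s$, which proves the lemma.
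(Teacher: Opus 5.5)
Your proof is correct and takes essentially the paper's route. Boxes are filled top-down with fresh uses after all clearings, so each predecessor box is nonempty with smaller use when a box at $\gamma$ is filled, and any enumeration into $A$ that destroys a predecessor also destroys the box at $\gamma$; your induction on stages (Steps 1 and 2) just makes this explicit, and Step 3 is redundant, since Step 1 already shows that clearing an ancestor empties the descendant box.
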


\begin{proof}
Boxes are filled only after all clearings for the stage have been performed,
and they are filled along the current approximation from top to bottom with
fresh uses.  Hence, at the stage when a box at $\gamma$ is filled, each relevant
predecessor box is already nonempty and has smaller use.  If later an
enumeration into $A$ destroys the predecessor computation, then it also destroys
the computation at $\gamma$, whose use is larger.  Thus the relation holds at
every stage.
\end{proof}

\begin{lemma}\label{finite-injury-main}
Let $\rho$ be an even node on the true path. Then $\rho$ is initialized only finitely many times, and after its last initialization it acts only finitely many
times.
\end{lemma}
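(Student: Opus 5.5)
We argue by induction on the length of $\rho$ along the true path; the sources of initialization are few. An even node $\rho$ is initialized in only three ways. First, at the end of a stage, when $\rho>_{\mathrm{left}}\delta_s$. Second, in Case 1(3) or Case 3, when some even $\alpha\subsetneq\rho$ acts. Third, in Case 5, where it is only labelled as initialized by an action of a proper even predecessor. So it suffices to show two things: the true path is eventually never to the left of $\rho$, and every even $\alpha\subsetneq\rho$ acts only finitely often.

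\textbf{The first source.} Let $\alpha\subsetneq\rho$ be any node. By the induction hypothesis, each even $\alpha\subsetneq\rho$ is initialized finitely often and afterwards acts finitely often. Since $T$ is finitely branching, there is a stage after which no node to the left of $\rho$ on a level $\le|\rho|$ is visited. This is the defining property of the true path as the leftmost path of infinitely-visited nodes, using finite branching at each level. Hence after some stage $\rho$ is never strictly right of $\delta_s$, and the first source of initialization stops.

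\textbf{The second source.} Each even $\alpha\subsetneq\rho$ acts only finitely often after its last initialization, by induction. Together with the first source, this gives a last initialization $s_0$ of $\rho$.

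It remains to show that $\rho$ acts finitely often after $s_0$. Case 3 can occur at most once after $s_0$, since after it $\rho$ either holds a claim or is satisfied. Each Case 1 action drops a claim and claims a new base of length $k_\rho$ not used since the last initialization. There are at most $2^{k_\rho}$ such bases, so after at most $2^{k_\rho}$ Case 1 actions no unused legal base remains. At that point $\rho$ declares itself satisfied, and a satisfied node never acts again until it is reinitialized, because Case 4 is not an action. Thus $\rho$ acts at most $2^{k_\rho}+1$ times after $s_0$.

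\textbf{Main obstacle.} This argument rests on the bookkeeping claim that satisfaction persists and that the base budget is not replenished. The budget could only be replenished by initialization, and there is none after $s_0$. We must also check that Subcase 1-2's outcome $c+2$ stays within the outcome range $\le 2^{k_\rho+1}+1$. This is what guarantees that the true path really passes through $\rho$ at a legitimate outcome, but it does not affect finiteness. Finally, a node on the true path is visited infinitely often but, after $s_0$, only finitely many of those visits are actions; that is exactly the statement.
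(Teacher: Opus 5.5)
Your proposal is correct and follows essentially the same route as the paper's proof: induction on length, the true-path property to stop initialization from the left, and the induction hypothesis to stop initialization by even proper initial segments of $\rho$. After the last initialization, you bound the actions using the $2^{k_\rho}$ possible bases. Your version is slightly more explicit: it gives the bound $2^{k_\rho}+1$ and notes that a satisfied node cannot act again until it is reinitialized. It also rightly avoids relying on the paper's remark that the set of legal bases is fixed, which is not needed for finiteness.
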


\begin{proof}
The proof is by induction on the length of $\rho$.  Once no node to
the left of $\rho$ is visited, only proper
initial segments of $\rho$ can initialize it; by induction, those nodes act
only finitely often.

Assume now that $\rho$ is on the true path and work after its last
initialization.  No higher-priority node changes its claim after this point.  Hence the set of legal bases of length
$k_\rho$ is fixed.  The node $\rho$ never uses the same base twice between two
initializations, and there are only $2^{k_\rho}$ possible bases.  Case 3 can
occur only when $\rho$ has no current claim, and each later Case 1 drops one
claim.  Hence $\rho$ acts only finitely many times.
\end{proof}

\begin{lemma}\label{extension-jump-main}
Let $\alpha$ be an even node and suppose that, after the last initialization of
$\alpha$, Subcase 1-2 is applied to $\alpha$ at a stage $s$.  Let
$w^+$ be the corresponding bound, and suppose that the outcome assigned to
$\alpha$ at stage $s$ is $c+2$.  Assume further that after stage $s$ no node to
the left of $\alpha$ is visited and $\alpha$ is not initialized.  Then no box
attached to an odd proper extension of $\alpha$ which is nonempty at the
beginning of the end-of-stage actions at stage $s$ and has use at most $w^+$ is
ever cleared after stage $s$.
\end{lemma}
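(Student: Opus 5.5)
The plan is to pin down where on the tree each threatening box sits relative to the outcome $c+2$, and then check that neither clearing operation, (ii) or (iii) at the end of a stage, can reach it afterwards. Fix a box $\eta_x^\gamma$ with $\gamma$ odd, $\gamma\supset\alpha$, nonempty at the start of the end-of-stage actions at stage $s$ and with use $u\le w^+$.

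\textbf{Step 1: the box lies to the left of $\alpha$'s new outcome.} Boxes are filled only in action (iv), and only at nodes $\gamma\subset\delta_t$ with fresh uses. So the computation currently in the box was filled at some stage $t<s$ with $\gamma\subset\delta_t$, hence $\gamma(|\alpha|)=\delta_t(|\alpha|)$. I want $\delta_t(|\alpha|)\le c+1$, where $c$ is as in Subcase 1-1(4) at stage $s$.

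If $t$ is after the last initialization of $\alpha$, I use the pairing structure of the outcomes. After an initialization, the first visit gives outcome $1$ or $0$ (Case 3 or Case 5). Thereafter Cases 1-1, 2 and 4 keep the outcome in the current pair $\{c',c'+1\}$, and Subcase 1-2 moves it to $c'+2$. So the even part of $\alpha$'s outcome is nondecreasing between initializations, and at stage $t$ it is at most $c$. Hence $\delta_t(|\alpha|)\le c+1<c+2$.

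If $t$ precedes the last initialization of $\alpha$, I need a separate argument. An initialization caused by a visit to the left clears $\gamma$'s boxes via (ii). An initialization from an ancestor's action needs more care: I would try to use that the box survived with use $\le w^+$ together with the freshness of uses to rule this case out, or to reduce it to the first case. I expect this bookkeeping to be the main obstacle.

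In either case, $\gamma<_{\mathrm{left}}\alpha(c+2)$.

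\textbf{Step 2: every path from stage $s$ on lies to the right of $\gamma$.} Consider $\delta_u$ for $u\ge s$. At $u=s$ we have $\delta_s(|\alpha|)=c+2$ by Subcase 1-2. For $u>s$, no node left of $\alpha$ is visited, so $\delta_u$ either passes to the right of $\alpha$ or extends $\alpha$. In the latter case, $\alpha$ is never initialized, so by the monotonicity of Step 1 its outcome is at least $c+2$. Either way $\gamma<_{\mathrm{left}}\delta_u$, and in particular $\gamma\not\subseteq\delta_u$.

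\textbf{Step 3: neither clearing operation touches the box.} Operation (ii) clears only boxes of odd nodes strictly to the right of $\delta_u$, so it misses $\gamma$. Operation (iii) clears $\eta_f^{\alpha'}$ only for $\alpha'$ with $\alpha'\infty\subseteq\delta_u$, which forces $\alpha'\subset\delta_u$, and $\gamma$ is not on $\delta_u$. So the box is never cleared in the sense of the definition, i.e.\ its own use is never deliberately enumerated.

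If some other enumeration destroys the computation, the box becomes empty. Any later refilling is a new filling with a fresh use, which is not covered by the lemma. This completes the plan, with Step 1's treatment of boxes filled before the last initialization as the delicate point.
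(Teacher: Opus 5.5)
Your Steps 2 and 3 match the paper's argument: later visits give $\alpha$ an outcome $\ge c+2$, so $\gamma$ stays strictly left of every later $\delta_u$, and neither (ii) nor (iii) can reach it. Your monotonicity argument for boxes filled after $\alpha$'s last initialization is also correct. The gap is the case you leave open in Step 1. It concerns a box at some $\gamma\supseteq\alpha y$ with $y\ge c+2$ that was filled \emph{before} the last initialization of $\alpha$. Such a box is not necessarily cleared at the initialization stage itself. For example, an even ancestor may act, and the path may then turn right of $\alpha$ with outcome $f$ at an odd $\zeta$ satisfying $\zeta\infty\subseteq\alpha$; then $\gamma$ is left of that path and is untouched. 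The direction you suggest would not close this case. Fresh uses make older boxes have \emph{smaller} uses, so the hypothesis that the use is at most $w^+$ cannot exclude them. The paper's proof asserts this step in one sentence, so you were right to flag it, but your proposal does not prove it.

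The missing observation is that $\alpha$ must pass through a low outcome before stage $s$. Initialization deletes $\alpha$'s claim and its satisfied status, and Subcase 1-2 at $s$ needs a current claim. Case 1(2) only replaces an already existing claim, so this claim was created in Case 3 at some visit after the last initialization and before $s$. Let $v$ be the first stage, at or after the last initialization, at which $\alpha$ is visited and assigned an outcome. At $v$ the node has no claim and is not satisfied, so only Case 5 or Case 3 applies, and hence $\delta_v(|\alpha|)\in\{0,1\}$. End-of-stage action (ii) at $v$ therefore clears the boxes of every odd $\gamma\supseteq\alpha y$ with $y\ge 2$. Action (iv) at $v$ fills only along $\delta_v$, so none of these is refilled at $v$. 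Consequently, a box nonempty at stage $s$ that was filled before $v$ lies below $\alpha 0$ or $\alpha 1$, hence strictly left of $\alpha(c+2)$. A box filled at a stage in $[v,s)$ is handled by your monotonicity argument. If you replace your case split ``before/after the last initialization'' by ``before/after $v$'', the proof is complete.
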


\begin{proof}
Let $\eta_x^\gamma$ be a box as in the statement.  Since it is already nonempty
before the end-of-stage actions at stage $s$, it was filled at an earlier stage
at which the approximation to the path passed through $\gamma$. Then the outcome of $\alpha$ on that earlier path
was strictly smaller than $c+2$.  Hence, once stage $s$ assigns the outcome
$c+2$ to $\alpha$, the node $\gamma$ is strictly to the left of the current
approximation below $\alpha$.
At every later stage at which $\alpha$ is visited, the outcome of $\alpha$ is
again at least $c+2$, unless $\alpha$ has first been initialized; the latter is
excluded by hypothesis.  Thus $\gamma$ remains strictly to the left of the later
approximations whenever the construction passes through $\alpha$.  The ordinary
end-of-stage clearing of boxes strictly to the right of the current
approximation therefore never clears $\eta_x^\gamma$.  The clearing of an
$f$-box along an $\infty$-outcome also cannot clear it, since that operation
only applies to odd nodes which are initial segments of the current
approximation.
Finally, a clearing caused by an even proper initial segment of $\alpha$ would
initialize $\alpha$, again contrary to the hypothesis.  Therefore the box is
never cleared after stage $s$.
\end{proof}

\begin{lemma}\label{disposal-main}
Let $\alpha$ be the $Q_e$-node on the true path.  Let $s_0$ be a stage after
which the left of $\alpha$ is never visited and $\alpha$ is never initialized.
Suppose that $\alpha$ drops a claim $C=R(\rho,F)$ at an
$\alpha$-expansionary stage $s>s_0$.  Then for all $X \in C$, if $\Psi^X$ is total, then $(\exists n)\Psi^X\upharpoonright n\in W_e^A$.

\end{lemma}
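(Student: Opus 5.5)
Fix $X \in C$ with $\Psi^X$ total, and let $\tau = X\upharpoonright(s-1)$. The proof has two parts: show that $\Psi^X$ passes through the string $\nu_\tau$ chosen at stage $s$, and show that the $A$-computation putting $\nu_\tau$ into $W_e^A$ is never destroyed.

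\textbf{Step 1: $\tau$ is handled at stage $s$.} Since $C = R(\rho,F)$ and $s-1 \ge \max\{|\xi| : \xi \in F\cup\{\rho\}\}$ by Definition \ref{d6}(ii), membership of $X$ in $C$ is decided by $\tau$. So $[\tau]\subseteq C$, that is, $\tau$ is inside $C$. By step (1) of Subcase 1-1 (also performed in Subcase 1-2), $\Psi^{\tau0}=\Psi^{\tau1}=\nu_\tau$, where $\nu_\tau$ properly extends $\Psi^\tau$ and $\nu_\tau\in W_e^A[s]$. Values of $\Psi$ are only extended along longer oracles and are never redefined, so $\nu_\tau\subseteq\Psi^X$. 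Taking $n=|\nu_\tau|$ gives $\Psi^X\upharpoonright n=\nu_\tau$.

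\textbf{Step 2: the computation survives.} Let $u\le w\le w^+$ be the oracle use of the chosen enumeration. It suffices to show that no number $\le w^+$ enters $A$ after stage $s$. Numbers enter $A$ only through box clearings, and a cleared box's use is enumerated only if its computation is still intact. So we must show that no box with use $\le w^+$ that is intact at stage $s$ is cleared later. After stage $s$, any newly filled box receives a fresh use larger than every number seen so far, hence larger than $w^+$. So only boxes already nonempty at stage $s$ matter. Split them by position relative to $\alpha$.

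\begin{enumerate}[label=(\alph*)]
\item Boxes at odd $\gamma>_{\mathrm{left}}\alpha$ have use $>w^+$ by Definition \ref{d6}(iv)(b).
\item Boxes at odd $\gamma<_{\mathrm{left}}\alpha$ are never cleared after $s_0$: clearing (ii) applies only to nodes right of $\delta_t$, and $\gamma$ stays left of $\delta_t$ because no node left of $\alpha$ is visited. Clearing (iii) applies only to initial segments of $\delta_t$.
\item For boxes at odd $\beta\subset\alpha$, clearing (ii) never applies. Clearing (iii) clears only $\eta_f^\beta$, and only when $\beta\infty\subseteq\delta_t$. If $\beta f\subseteq\alpha$, then $\beta\infty$ lies left of $\alpha$ and is never visited after $s_0$. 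If $\beta\infty\subseteq\alpha$, then $\eta_f^\beta$ is empty or has use $>w^+$ by Definition \ref{d6}(iv)(a).
\item For boxes at odd $\gamma\supset\alpha$: in Subcase 1-1 none have use $\le w^+$. In Subcase 1-2, Lemma \ref{extension-jump-main} applies directly, given the hypotheses on $s_0$, and shows that none of these boxes is ever cleared.
\end{enumerate}

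Finally, the restraint $r_\alpha=w^+$ is set at stage $s$, and any later action by $\alpha$ uses a bound at least as large. So later expansionary stages of $\alpha$ do not authorize enumerations below $w^+$ either. This is consistent with the fact that all enumerations arise from clearings, which the case analysis above already covers.

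The main obstacle is (d) in combination with the timing subtlety: boxes filled during the end-of-stage actions at stage $s$ itself. These get fresh uses, larger than every number used so far, in particular larger than $w$. That is why Lemma \ref{extension-jump-main} only needs to cover boxes nonempty at the beginning of the end-of-stage actions.

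Once no number $\le w^+$ enters $A$ after $s$, the enumeration of $\nu_\tau$ into $W_e^A$ persists, so $\Psi^X\upharpoonright n=\nu_\tau\in W_e^A$.
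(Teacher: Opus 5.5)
Your proposal is correct and follows essentially the same route as the paper. The chosen $\nu_\tau$ lies on $\Psi^X$, and no number $\le w^+$ enters $A$ after stage $s$: Definition~\ref{d6}(iv)(a),(b) handles the $f$-boxes above $\alpha$ along $\infty$-outcomes and the boxes to the right, while Subcase 1-1 or Lemma~\ref{extension-jump-main} handles the boxes below $\alpha$. You are a bit more thorough than the paper, which leaves two things implicit: that $\Psi^X$ passes through $\nu_\tau$, and that the boxes to the left of $\alpha$, the $\eta_\infty$-boxes above $\alpha$, and $\eta_f^\beta$ with $\beta f\subseteq\alpha$ are never cleared after $s_0$.
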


\begin{proof}
Let $s$ be the stage at which the claim is dropped.  For every
$\tau\in2^{s-1}$ inside $C$, the construction chooses
$\nu_\tau\in W_e^A[s]$ extending $\Psi^\tau$ and defines
$\Psi^{\tau0}=\Psi^{\tau1}=\nu_\tau$.  Let $w$ be the maximum of the oracle
uses of these finitely many enumerations.
At the same step of the construction we set $r_\alpha=w^+=\max\{w,r_\alpha[s]\}$.  Hence every
$W_e^A$-enumeration used by $\alpha$ since its last initialization has oracle use
at most $r_\alpha$ after stage $s$.

It remains to show that no later enumeration into $A$ below $w^+$ can occur.  The
protection clause in the definition of $\alpha$-expansionary, applied with
$w^+$, ensures that at stage $s$ there is no dangerous box of use at most
$w^+$ above $\alpha$ along an $\infty$-outcome, nor to the right of $\alpha$.
Thus later clearings of boxes of these two kinds cannot change $A$ below
$w^+$: boxes already present there have use greater than $w^+$, and boxes filled
later have fresh use greater than $w^+$.
It remains to consider boxes attached to odd extensions of $\alpha$.  In
Subcase 1-1 there is no such extension box of use at most $w^+$. In Subcase 1-2, the only possible small extension boxes are boxes already
present at stage $s$.  By Lemma \ref{extension-jump-main}, these boxes are not
cleared later while the hypotheses of the present lemma remain in force. 
\end{proof}

\begin{lemma}\label{negative-main}
All negative requirements are satisfied.
\end{lemma}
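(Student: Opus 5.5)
Fix $e$ with $W_e^A$ dense, let $\alpha$ be the $Q_e$-node on the true path, and fix $s_0$ as in Lemma \ref{disposal-main}; it exists by Lemma \ref{finite-injury-main} together with the choice of the true path as leftmost. Fix also $X$ with $\Psi^X$ total. The goal is some $n$ with $\Psi^X\upharpoonright n\in W_e^A$. The plan is to show that after $s_0$ the node $\alpha$ drops every claim it makes, and that the clopen sets it handles (together with those held by higher-priority nodes) cover the whole space apart from a part where $\Psi$ is not total.

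\textbf{Step 1: every claim is eventually dropped.} Suppose $\alpha$ holds a claim $C=R(\rho,F)$ forever after $s_0$. Since $C$ is clopen and, by the construction's step (v), $\Psi$ is kept constant along strings inside $C$ until the claim is dropped, only finitely many values $\Psi^\tau$ occur for $\tau$ of a given length inside $C$. In fact, after the claim is made, $\Psi^\tau$ for $\tau$ inside $C$ is frozen at finitely many strings. Density of $W_e^A$ gives extensions of each of these in $W_e^A$, with $A$-correct computations of some finite use $w$. Clause (iv) of Definition \ref{d6} must then be shown to hold eventually. The boxes $\eta_f^\beta$ with $\beta\infty\subseteq\alpha$ are cleared at every $\beta$-expansionary visit and refilled with fresh large use. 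Boxes to the right of $\alpha$ are cleared whenever $\alpha$ is visited and refilled with fresh use. So at a sufficiently late $\alpha$-visit all such boxes have use $>w^+$, and $s$ is $\alpha$-expansionary. This contradicts the claim being held forever.

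\textbf{Step 2: each claimed set is handled.} By Lemma \ref{disposal-main}, every $X$ in a dropped claim has an initial segment of $\Psi^X$ in $W_e^A$. By Lemma \ref{finite-injury-main}, $\alpha$ acts finitely often after $s_0$, so it ends up satisfied, having exhausted all legal bases of length $k_\alpha$.

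\textbf{Step 3: the remaining $X$.} Consider $X$ not in any set claimed by $\alpha$ after $s_0$. Then $X\upharpoonright k_\alpha$ was illegal: either it extends a base $b(\beta)$ of a higher-priority current claim, or the higher-priority claims inside it cover $[X\upharpoonright k_\alpha]$. In either case $X$ lies in the claimed set of some $\beta<_L\alpha$ that holds its claim permanently, since after $s_0$ those claims no longer change.
\begin{itemize}
\item If $\beta$ is to the left of the true path, then after it stops being visited its claim stays forever. By step (v), $\Psi^X$ is then constant from some point on, so $\Psi^X$ is not total, and this case is vacuous.
\item If $\beta$ is on the true path, it is a proper initial segment of $\alpha$. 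By Step 1 applied to $\beta$ (or by it being satisfied), it does not hold a claim permanently, unless $W_{e'}^A$ is not dense for its requirement. In that case $\Psi$ is again frozen on that claim, so $\Psi^X$ is partial.
\end{itemize}
Hence every $X$ with $\Psi^X$ total is covered by Step 2.

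The main obstacle is Step 1's verification of clause (iv). Particularly delicate is the interaction of $w^+$ with boxes $\eta_\infty^\beta$ above $\alpha$ and with $\eta_f$-boxes along $f$-outcomes, which are never cleared. I would argue these are harmless because they are not required in (iv), and that the uses of the relevant $W_e^A$-computations stabilize since $A\upharpoonright w$ eventually settles.
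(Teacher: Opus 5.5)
Your route is the paper's: Step 1 is its argument that $\alpha$ is eventually satisfied, and Steps 2--3 are its covering argument ending in Lemma \ref{disposal-main}. In Step 1 you should invoke the least-use convention of Definition \ref{d6}(iii) explicitly. The bound $w^+$ in clause (iv) is computed from the witnesses chosen at stage $s$, not from your $A$-correct computations. It is at most $\max\{w,r_\alpha\}$ only because the construction picks least-use witnesses; ``$A\upharpoonright w$ settles'' alone does not give this.

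The genuine gap is in Step 3.
\begin{itemize}
\item The inference ``$X$ is in no set claimed by $\alpha$ after $s_0$, hence $X\upharpoonright k_\alpha$ was illegal'' is false. The base $\rho=X\upharpoonright k_\alpha$ may be legal and claimed as $(\rho,F)$ while $X\in[\xi]$ for some $\xi\in F$.
\item More to the point, in none of your cases does it follow directly that $X$ lies in the \emph{claimed set} of a permanent higher-priority claim. Legality is phrased in terms of the cones $[b(\beta)]$. The claimed set of $\beta$, however, is only the residual $[b(\beta)]\setminus\bigcup_{\xi\in F_\beta}[\xi]$, and $X$ may sit in one of its excluded subcones. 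All you actually get is that $X$ extends the base of some fixed higher-priority claim.
\end{itemize}

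The missing step, which the paper supplies, is a maximal-base argument. Take such a base $\eta\subset X$ of maximal length; there are only finitely many fixed claims. The exclusions recorded with that claim are bases of claims that are still current, since any change to them would have initialized the owner of $\eta$. So they are themselves fixed higher-priority bases, strictly longer than $\eta$, and by maximality none is an initial segment of $X$. Hence $X\in R(\eta,F)$, and the freezing by clause (v) contradicts totality. This works for every permanent claim, so your split into left-of-path and true-path $\beta$, with its appeal to Step 1 for $\beta$, is unnecessary.

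Relatedly, take $s_0$ to be the stage of the last initialization of $\alpha$, as the paper does, not an arbitrary stage satisfying the hypotheses of Lemma \ref{disposal-main}. Otherwise a base claimed and dropped between the last initialization and $s_0$ is legal but, since bases are used at most once, is never claimed again after $s_0$. Such a base escapes both Lemma \ref{disposal-main} (applied with your $s_0$) and your dichotomy.
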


\begin{proof}
Suppose, towards a contradiction, that some $Q_e$ fails.  Then $W_e^A$ is
dense, while for some $X$, $\Psi^X$ is total and $\Psi^X\upharpoonright n\notin W_e^A$ for all $n$.
Let $\alpha$ be the $Q_e$-node on the true path.  Fix a stage $s_0$ after
which $\alpha$ is never initialized and no node to its left is visited.  We
may take $s_0$ to be the stage of the last initialization of $\alpha$: any
later visit to a node to its left, or any later change in a higher-priority
claim relevant to $\alpha$, would initialize $\alpha$ again.  Thus all such
claims are fixed after $s_0$, and every base used by $\alpha$ after this
initialization is claimed at a stage greater than $s_0$.

We first show that $\alpha$ is eventually satisfied.  By Lemma
\ref{finite-injury-main}, choose $s_1>s_0$ after the last action of $\alpha$.
Suppose that $\alpha$ is not satisfied.  Since $\alpha$ is visited infinitely
often, it must then have a claim $C=R(\rho,F)$ which remains current after
$s_1$; otherwise its next visit would fall under Case~3 and make $\alpha$ act.
Choose $t\geq\max\{|\xi|:\xi\in F\cup\{\rho\}\}$.
For each $\tau\in2^t$ inside $C$, density gives a string in $W_e^A$ properly
extending $\Psi^\tau$.  Choose stable computations witnessing these finitely
many extensions, let $u$ bound their oracle uses, and put
$u^+=\max\{u,r_\alpha\}$.  While $C$ remains current, clause~(v) keeps $\Psi$
constant above each such $\tau$ inside $C$.  By the least-use convention in
Definition~\ref{d6}(iii), at every sufficiently late stage the witnesses
chosen there therefore have use at most $u$.
Choose the first visit to $\alpha$ at a stage $s$ so late that the stable
computations and the number $u^+$ have appeared, $s-1\geq t$, and the bound in
the preceding paragraph applies.  At this visit the stage-specific value of
$w^+$ is at most $u^+$.  If the visit is expansionary, then Case~1 already makes
$\alpha$ act.  Otherwise only the protection condition can fail, and the
end-of-stage actions clear all boxes relevant to Definition~\ref{d6}(iv) whose
uses are at most $u^+$.  Any such box filled afterwards has fresh use greater
than $u^+$.  Hence the next visit to $\alpha$ is expansionary, and Case~1 again
makes $\alpha$ act.  Either case contradicts the choice of $s_1$.  Thus
$\alpha$ is eventually satisfied.

Now suppose that the base of a fixed higher-priority claim is an initial
segment of $X$, and choose such a base $\eta$ of maximal length; there are only
finitely many fixed claims.  If the corresponding claim is $R(\eta,F)$, then
no member of $F$ is an initial segment of $X$, since its recorded exclusions
are bases of fixed higher-priority claims and any such member would be longer
than $\eta$.
Consequently $X\in R(\eta,F)$.  This claim remains current, so clause~(v)
keeps $\Psi^{X\upharpoonright m}$ constant for all sufficiently large $m$,
contrary to the totality of $\Psi^X$.  Therefore no fixed higher-priority base
is an initial segment of $X$.
Let $\rho=X\upharpoonright k_\alpha$.  The preceding conclusion implies that
$\rho$ is legal for $\alpha$.  Indeed, no fixed higher-priority base is an
initial segment of $\rho$, and the fixed bases extending $\rho$ cannot cover
$[\rho]$, since $X$ belongs to none of their cones.  Moreover, $X$ belongs to
the corresponding clopen remainder.  Since the higher-priority claims are
fixed, $\rho$ remains legal after $s_0$.  After its last initialization,
$\alpha$ uses each legal base at most once and is declared satisfied only when
no legal unused base remains.  Thus $\rho$ was claimed and its claim was later
dropped after $s_0$.  Lemma~\ref{disposal-main} yields
$(\exists n)\,\Psi^X\upharpoonright n\in W_e^A$, contrary to the choice of $X$.
\end{proof}

\begin{lemma}\label{15}
The global requirement $M$ is satisfied.
\end{lemma}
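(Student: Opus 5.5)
We have to show that the set of $X$ with $\Psi^X$ total has positive measure. By clause (v) of the end-of-stage actions and the extension step in Case 1, $\Psi^X$ fails to be total only if there is some $m$ such that $\Psi^{X\upharpoonright n}$ is constant for all $n\ge m$. This happens only if, from some stage on, the cone containing $X$ lies inside a claimed clopen set that is never dropped. A claim made by an even node $\alpha$ is either dropped at an $\alpha$-expansionary stage, in which case $\Psi$ is properly extended on every cylinder inside it, or it is cancelled when $\alpha$ is initialized. In the latter case, step (v) resumes extending $\Psi$ by appending $0$ from the next stage onward, because the cylinder is no longer contained in any current claim. So the plan is to show that
\[
N=\{X:\Psi^X\text{ is not total}\}\subseteq\bigcup_{\alpha}\,\{X: X\text{ lies in a claim of }\alpha\text{ that is current from some stage on}\}.
\]
We then bound the measure of the right-hand side by a quantity strictly less than $1$.

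To carry this out, fix an even node $\alpha$. I would show that at most one claim of $\alpha$ can be permanent, namely its last claim. Indeed, if $\alpha$ has infinitely many initializations, then no claim of $\alpha$ is permanent. Otherwise, after its last initialization $\alpha$ holds at most one claim at a time, and that claim either is dropped or stays forever. A permanent claim is a subset of some cylinder $[\rho]$ with $|\rho|=k_\alpha$, so its measure is at most $2^{-k_\alpha}$. Hence
\[
\mu(N)\le\sum_{\alpha\text{ even}}2^{-k_\alpha}.
\]
Then I would evaluate this sum using the way the parameters are assigned. Every even node has its own parameter (Lemma \ref{claims-antichain-main}), and at level $2n+2$ the parameters are consecutive integers $c,c+1,\dots$ beginning at a number $c>2$ not used before. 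So the map $\alpha\mapsto k_\alpha$ is injective into $\{2\}\cup\{3,4,\dots\}$, and this gives
\[
\sum_\alpha 2^{-k_\alpha}\le\sum_{k\ge2}2^{-k}=\tfrac12 .
\]
Therefore $\mu(\{X:\Psi^X\text{ total}\})\ge\tfrac12>0$, which is $M$.

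The step I expect to be the main obstacle is the claim that non-totality forces $X$ into a permanent claim. Two things must be checked carefully. First, when a claim is dropped, the strings $\tau$ at the stage $s$ of the drop get $\Psi^{\tau0}=\Psi^{\tau1}=\nu_\tau$, which is a proper extension. After that, step (v) extends $\Psi$ again unless $X$ falls into some later claim. Second, $X$ might land in infinitely many successive transient claims, coming from different nodes or from the same node. Each such claim is eventually dropped, and each drop properly extends $\Psi$ on $X$, so infinitely many transient claims still make $\Psi^X$ total. The only other way $\Psi^X$ can fail to grow is for $X$ to stay forever inside one fixed claim, and this uses that a claim's clopen set is not changed until it is dropped. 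I would state this as a small auxiliary observation: if $X$ lies in no permanent claim, then for every stage $s$ there is a later stage at which $|\Psi^{X\upharpoonright t}|$ increases. The measure bound then follows as above.
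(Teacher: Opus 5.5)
\paragraph{Gap.} The gap is in the step you flag as the main obstacle. You have not shown that $N=\{X:\Psi^X\text{ not total}\}$ is contained in the union of \emph{permanent} claims, and the reasoning you offer for it is incorrect.

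You assume that a non-permanent claim is either dropped in Case 1, which extends $\Psi$, or cancelled by initialization, after which ``the cylinder is no longer contained in any current claim.'' The second alternative need not hold. Suppose $\alpha$'s claim containing $X$ is cancelled at stage $s$. This happens because a proper initial segment of $\alpha$ acts, or because $\delta_s$ passes to the left of $\alpha$ (end-of-stage action (i)). If some node $\gamma$ on $\delta_s$ acts at that stage, then $\gamma<_L\alpha$, and $\gamma$ may create a new claim at that very stage. Its legality test only consults $H_s(\gamma)$, the bases of nodes $<_L\gamma$. So it ignores $\alpha$'s claim and may well cover $X$.

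If the new claim is resolved at level $s$, clause (v) finds $[X\upharpoonright s]$ inside it and appends no bit. Hence $\Psi$ is not extended along $X$ at that stage. Nothing in your argument rules out such handoffs happening infinitely often, to holders $\alpha_0>_L\alpha_1>_L\cdots$. In that case $X$ lies in a current claim at every late stage and $\Psi^X$ is partial, yet no claim containing $X$ is permanent. Your bound $\sum_\alpha 2^{-k_\alpha}$ over permanent claims says nothing about such $X$. Likewise, your remark that each transient claim ``is eventually dropped'' conflates dropping with cancellation: a cancelled claim produces no extension of $\Psi$.

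\paragraph{Repair.} Stop tracking individual claims. Let $C_s$ be the union of the claims current at the end of stage $s$. If $X\notin C_s$, then at stage $s$ one of two things happens:
\begin{enumerate}[label=(\arabic*)]
\item Case 1 has already set $\Psi^{X\upharpoonright s}=\nu_\tau$, a proper extension; or
\item clause (v) appends $0$.
\end{enumerate}
Either way $\Psi$ grows along $X$, so $N\subseteq\liminf_s C_s=\bigcup_t\bigcap_{s\ge t}C_s$.

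Your own counting shows $\mu(C_s)\le\frac12$ for every $s$: each node holds at most one claim, of measure at most $2^{-k_\alpha}$, and the parameters are distinct and at least $2$. Hence $\mu(\bigcap_{s\ge t}C_s)\le\frac12$ for every $t$. Continuity from below (or Fatou's lemma, as the paper uses) then gives $\mu(N)\le\frac12$. This is the paper's argument, and it makes the per-node analysis of permanent claims unnecessary.
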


\begin{proof}
For each stage $s$, let $C_s$ be the union of the clopen sets currently claimed
by even nodes at the end of stage $s$.  If $X\notin C_s$ for some arbitrarily large
stages $s$, then clause (v) appends a bit to $\Psi$ along $X$ at arbitrarily large
stages, and hence $\Psi^X$ is total.  Therefore the set of oracles $X$ such that $\Psi^X$ is not total is a subset of $\liminf_s C_s$.
By Lemma \ref{claims-antichain-main}, the current claimed clopen sets are
pairwise disjoint.  If $\alpha$ has base length $k_\alpha$, then its claimed
clopen set is contained in a cone of measure $2^{-k_\alpha}$.  The parameters
$k_\alpha$ are pairwise distinct and all at least $2$, so for every $s$, $\mu(C_s)\leq\sum_{n\geq2}2^{-n}=\frac12$.
Fatou's lemma for measure gives $\mu(\liminf_s C_s)
      \leq \liminf_s \mu(C_s)
      \leq \frac12$.
Thus $\Psi$ is total on a class of measure at least $1/2$.
\end{proof}

\begin{lemma}\label{pos}
All positive requirements are satisfied.
\end{lemma}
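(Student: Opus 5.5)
We need to show that for each $e$, $e\in\mathrm{Tot}$ if and only if $A'\vDash\sigma_e$. Fix $e$ and let $\gamma$ be the odd node of length $2e+1$ on the true path. By Lemma \ref{finite-injury-main} applied to the even node $\gamma\upharpoonright 2e$ (and its predecessors), there is a stage $s_0$ after which no node to the left of $\gamma$ is visited and no even proper initial segment of $\gamma$ acts or is initialized. In the reading order of $\sigma_e$ the boxes of nodes to the right of $\gamma$ come before those of $\gamma$, since $\rho_i=\gamma_{n-i}$ lists the level from right to left. The plan therefore has three steps.

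First, every box of an odd node $\gamma'>_{\mathrm{left}}\gamma$ of length $2e+1$ is empty in the limit. After $s_0$, each visit to $\gamma$ puts $\delta_s$ to the left of $\gamma'$, so end-of-stage action (ii) clears both boxes of $\gamma'$ infinitely often. A box of $\gamma'$ can only be refilled when $\gamma'\subset\delta_s$, which never happens after $s_0$. Hence $A'(\eta^{\gamma'}_x)=0$ for these nodes, and $\sigma_e$ reduces to the evaluation at $\gamma$: its value is $1$ iff $\eta^\gamma_f$ is empty and $\eta^\gamma_\infty$ is occupied.

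Second, we analyse the two cases at $\gamma$.

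If $\Phi_e$ is total, there are infinitely many $\gamma$-expansionary stages, so the true outcome is $\infty$. At each such stage, action (iii) clears $\eta^\gamma_f$; it can be refilled only at stages with outcome $f$. For $\eta^\gamma_f$ to end up nonempty it would have to be filled at some stage and never cleared afterwards, but the next $\infty$-visit clears it. So $\eta^\gamma_f$ is empty in the limit, provided a clearing really removes the $\Theta$-computation. This holds because, by the box definition, clearing enumerates the use into $A$ and the computation was stable until then.

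For $\eta^\gamma_\infty$, after $s_0$ the only clearings that affect it are those caused by enumerations of smaller numbers. These come from clearing boxes of nodes above $\gamma$, which are harmless: by Lemma \ref{box-nesting-main} they are fixed in the limit, since the $P$-nodes above $\gamma$ are handled by induction and their boxes eventually stabilise. They might also come from clearing boxes of nodes to the right, whose uses lie below $\eta^\gamma_\infty$'s use only if those boxes were filled earlier, and there are only finitely many such boxes. So eventually $\eta^\gamma_\infty$ is refilled at an $\infty$-visit and stays filled. Hence $A'\vDash\sigma_e$.

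If $\Phi_e$ is not total, then after some stage every visit to $\gamma$ has outcome $f$. The box $\eta^\gamma_f$ is then filled and never cleared by (ii) or (iii), by the same finiteness argument, so it stays occupied. Since $\eta^\gamma_f$ precedes $\eta^\gamma_\infty$ in the reading order, $\sigma_e$ evaluates to $0$.

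Third, and this is the main obstacle, we must ensure that the enumerations made by $Q$-type clearings do not destroy the stable box at $\gamma$ infinitely often. Only end-of-stage actions (ii) and (iii) enumerate into $A$, and each enumeration is the use of a specific box. So a box at $\gamma$ is destroyed only by clearing some box of smaller use. Lemma \ref{box-nesting-main} shows that the boxes along $\gamma$'s initial segments have smaller use, and these are eventually stable: the outcomes of the odd predecessors of $\gamma$ are eventually constant on the true path, with the $\infty$ case argued as above. Boxes to the right of $\gamma$ are finitely many once their uses exceed a bound, and later-filled boxes have fresh, larger use. I would argue by induction on $|\gamma|$ that all boxes along the true path stabilise, which gives the claim.
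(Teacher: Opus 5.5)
Your skeleton matches the paper's: reduce $\sigma_e$ to the boxes of $\gamma$, then split on whether $\Phi_e$ is total. The permanence step, however, which is the real content of the lemma, rests on false claims, and the finiteness argument you offer does not work.

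\textbf{What goes wrong.} The outcomes of the odd predecessors of $\gamma$ are \emph{not} eventually constant. If $\beta\infty\subseteq\gamma$, every non-expansionary visit to $\beta$ goes to $\beta f$, and this typically happens infinitely often. So $\eta_f^\beta$ is refilled and then cleared by action (iii) infinitely often, each time enumerating a number into $A$.
\begin{itemize}
\item For the same reason, nodes to the right of $\gamma$ (e.g.\ below $\beta f$) can be visited after $s_0$, and their boxes refilled and cleared infinitely often. This also refutes your remark in the first step that $\gamma'$ is never visited after $s_0$. That step's conclusion survives only because every refill is followed by a visit to $\gamma$ that clears it.
\item So there are infinitely many enumerations coming from exactly the boxes you call harmless. ``Only finitely many such boxes'' does not bound how often the box at $\gamma$ is destroyed: each refill gets a larger use, so the pool of smaller-use boxes keeps growing.
\item The proposed induction on stabilization of true-path boxes is beside the point. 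The on-path predecessor boxes of Lemma~\ref{box-nesting-main} are never cleared after $s_0$ at all, since clearing them requires a visit to the left of $\gamma$. That lemma is the tool for boxes \emph{below} $\gamma$, not above.
\item Also, $Q$-nodes never enumerate into $A$; they only wait.
\end{itemize}

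\textbf{The missing idea} is the order of the end-of-stage actions within a single stage. Fix a visit $s_1>s_0$ to $\gamma$ with the true outcome $o$: an expansionary stage if $\Phi_e$ is total, or a visit after the last expansionary stage otherwise.
\begin{itemize}
\item At $s_1$, actions (ii) and (iii) run before (iv). So every box to the right of $\delta_{s_1}$, and every $\eta_f^\beta$ with $\beta\infty\subseteq\delta_{s_1}$, is emptied before $\eta_o^\gamma$ is filled (or kept) with use $u$.
\item After $s_1$, the only boxes that can still be cleared are of one of the following kinds:
  \begin{itemize}
  \item boxes of the two kinds just emptied, which receive fresh uses $>u$ whenever they are refilled;
  \item boxes below $\gamma o$, which have use $>u$ by Lemma~\ref{box-nesting-main}.
  \end{itemize}
  In the nontotal case, boxes below $\gamma\infty$ are never cleared again, because $\gamma\infty$ is never visited.
\item Boxes to the left of $\gamma$, and the on-path predecessor boxes, are never cleared after $s_0$.
\end{itemize}
Hence no number $\le u$ enters $A$ after $s_1$. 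This is the paper's argument, and it is what your proof needs in place of the stabilization claim.
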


\begin{proof}
Fix $e$.  We verify that the truth table $\sigma_e$ satisfies $e\in\mathrm{Tot}\Longleftrightarrow A'\vDash\sigma_e$.
Let $\alpha$ be the node of length $2e+1$ on the true path.  Choose a stage
$s_0$ after which no node to the left of $\alpha$ is visited and no even proper
initial segment of $\alpha$ acts.  From then on, every node of length $2e+1$
strictly to the right of $\alpha$ has both boxes cleared whenever $\alpha$ is
visited.  Boxes to the left of $\alpha$ may remain occupied, but the truth table
reads the level from right to left, so they are irrelevant once one of the boxes
of $\alpha$ is permanent.

Suppose first that $\Phi_e$ is total.  Then $\alpha$ has infinitely many
expansionary stages.  Choose an $\alpha$-expansionary stage $s_1>s_0$.  At the
end of $s_1$, the box $\eta_f^\alpha$ has been cleared and
$\eta_\infty^\alpha$ is filled if it was empty.  Let $u$ be the use of
$\eta_\infty^\alpha$.  We check that no later enumeration into $A$ below $u$ can
destroy this computation.  No node strictly to the left of $\alpha$ is visited
after $s_0$, and no even proper initial segment of $\alpha$ acts after $s_0$.
Every box to the right of $\alpha$, and every $f$-box on an odd predecessor
whose $\infty$-outcome leads to $\alpha$, has just been cleared or will be
refilled only with fresh use greater than $u$.  Every box properly below
$\alpha\infty$ which is nonempty after $s_1$ has use greater than $u$ by Lemma
\ref{box-nesting-main}, and boxes filled later also have fresh use greater than
$u$.  Therefore later clearings can enumerate only numbers greater than $u$.
Hence $\eta_\infty^\alpha$ is permanently occupied, while $\eta_f^\alpha$ is
cleared at later $\alpha$-expansionary stages and is not permanent.  Therefore
$A'\vDash\sigma_e$.

Suppose now that $\Phi_e$ is not total.  Then $\alpha$ has only finitely many
expansionary stages.  Choose $s_1>s_0$ after the last such stage and then a
later visit $s_2$ to $\alpha$ with outcome $f$, after all boxes to the right of
$\alpha$ filled before $s_2$ have been cleared.  At the end of $s_2$, the box
$\eta_f^\alpha$ is filled if it is empty; let $u$ be its use.  There is no later
expansionary visit to $\alpha$, so the construction never again clears
$\eta_f^\alpha$ because of an $\infty$-outcome at $\alpha$.  Also, after $s_0$
there is no left visit and no action by an even proper initial segment of
$\alpha$.  The boxes to the right of $\alpha$ which existed before $s_2$ have
already been cleared, any box below the later path has use greater than $u$ by
Lemma \ref{box-nesting-main}, and every box filled after $s_2$ receives fresh
use greater than $u$.  Thus every later clearing which can occur has use greater
than $u$, and $\eta_f^\alpha$ is permanently occupied.  Since the truth table
reads $\eta_f^\alpha$ before $\eta_\infty^\alpha$, we have
$A'\nvDash\sigma_e$.
\end{proof}

\begin{proof}[Proof of Theorem \ref{main}]
By Lemma \ref{pos}, $A$ is superhigh.  By Lemma \ref{negative-main}, \ref{15} and the discussion in Subsection \ref{requ}, $A$ is AEWG.
\end{proof}

\section{Proof of Theorem \ref{low2}}

Fix a $\text{low}_2$ r.e.
set $A$ and a recursive increasing enumeration $A=\bigcup_s A_s$. Since $A$ is given to us, we have only 
the requirements $Q_e$ and $M$ from Subsection \ref{requ}.

In the previous section, when we properly extend the definition of $\Psi$, the negative
requirement imposes its restraint by clear boxes in order to keep the
enumerations in $W_e^A$. Now we need to consider the case that some
number less than the use of enumerations will be enumerated into $A$ later which we cannot control.
To coexist with that, we use the ``certification process'' developed recently
by Cholak, Downey and Greenberg {\cite{CDG}}. The authors used this technique together with the $\Delta_3$ guessing argument
to prove that every $\text{low}_2$ r.e. set has an atomless hyperhypersimple superset, which is a milestone towards the Soare's $\text{low}_2$ Conjecture. This result is irrelevant but the technique in their proof is significant for this section. 

The proof that every low r.e. set has the AEWG property in \cite{MR4741732} uses Robinson's trick for guessing. This is done by asking $\Sigma^{0}_1(A)$ questions to $\emptyset'$, and using the approximation solutions to the questions given by the Limit Lemma and the Recursion Theorem to decide what to do in the construction. We do not use guessing arguments here though there is a version of guessing called the $\Delta_3$ guessing which is suitable for low$_2$ r.e. sets (see \cite{CDG} for example). Instead, the certification process uses domination property of low$_2$ sets and there is no need for the Recursion Theorem. We will explain how we use the ``certification process'' after introducing the setup.

We first record the consequence of the relativized High Domination Theorem
that will be used in the certification process.

\begin{lemma}\label{low2:dominating-approximation}
There is a $\emptyset'$-recursive function $f$ which dominates every
total $A$-recursive function. Moreover, $f$ has a recursive approximation
$\{f_s\}_{s\in\omega}$ such that $f_s(x)\leq f_{s+1}(x)$
for every $x$.
\end{lemma}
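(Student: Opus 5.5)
Since $A$ is $\text{low}_2$, we have $A''\equiv_T\emptyset''$. By the relativized form of Martin's High Domination Theorem, a set $D$ satisfies $D'\geq_T Z''$ exactly when $D$ computes a function dominating every total $Z$-recursive function. Take $Z=A$ and $D=\emptyset'$. Because $A$ is r.e., we have $A\leq_T\emptyset'$, so $\emptyset'$ is an oracle above $A$ with $(\emptyset')'=\emptyset''\geq_T A''$. Hence $\emptyset'$ is high relative to $A$, and there is a $\emptyset'$-recursive $g$ dominating every total $A$-recursive function.

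To keep the argument self-contained, I would prove the direction we need directly. The set $\mathrm{Tot}^A=\{e:\Phi_e^A\text{ total}\}$ is $\Pi^0_2(A)$, so $\mathrm{Tot}^A\leq_T A''\leq_T\emptyset''$. Write $\mathrm{Tot}^A$ as $\lim_n h(e,n)$ for some $\emptyset'$-recursive $h$, using the Limit Lemma relative to $\emptyset'$. Define $g(x)$ to be the maximum of $\Phi_e^A(x)$ over all $e\le x$ for which we can certify convergence. This is the standard construction: $\emptyset'$ searches, for each $e\le x$, for either a stage at which $h$ currently says ``total'' and $\Phi_e^A(x)$ converges, or for evidence of the opposite guess. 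The search always terminates because $\emptyset'$ can decide whether $\Phi_e^A(x)\downarrow$, since that is $\Sigma^0_1(A)\subseteq\Sigma^0_2$ and $A\leq_T\emptyset'$.

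Simpler still: put $g(x)=\max\{\Phi_e^A(x): e\le x,\ \Phi_e^A(x)\downarrow\}$. This is $\emptyset'$-recursive because $A\oplus A'\leq_T\emptyset'$ when $A$ is low. But $A$ is only $\text{low}_2$, so $A'$ need not be below $\emptyset'$, and this simple definition is not available. That is why the high-domination route is needed, and I would quote it as the relativized High Domination Theorem (Martin) rather than reprove it.

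Next I would turn $g$ into an increasing approximation. Fix $\emptyset'$-recursive $g$, so $g=\Gamma^{\emptyset'}$ for a functional $\Gamma$ that is total on $\emptyset'$. Set
\[
f(x)=\max_{y\le x}\bigl(g(y)+\gamma(y)\bigr),
\]
where $\gamma(y)$ is the use of the computation $\Gamma^{\emptyset'}(y)$. Then $f\geq g$, so $f$ still dominates every total $A$-recursive function, and $f\leq_T\emptyset'$. For the approximation, let $f_s(x)$ be the maximum over all stages $t\le s$ and all $y\le x$ of $\Gamma^{\emptyset'_t}(y)[t]+\gamma(y)[t]$, taking only convergent computations and using $0$ if there are none.

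Each $f_s$ is recursive, uniformly in $s$, and monotonicity $f_s(x)\le f_{s+1}(x)$ holds by construction since we take a running maximum. The remaining point is that $f_s(x)$ is eventually constant with a finite limit. This is the main obstacle, because a running maximum could in principle go to infinity. Here I would use the standard fact that for the true computation with use $u$, once $\emptyset'\upharpoonright u$ has settled, the computation is permanent. Every spurious computation seen at a stage $t$ must be destroyed by a later change in $\emptyset'_t$ below its use. Since $\emptyset'$ is r.e. with an increasing enumeration, only finitely many such changes occur below any fixed bound. A priori, though, spurious uses could be unboundedly large. To handle this, I would use a hat-trick style convention: accept a stage-$t$ computation only if its use lies below the current settling threshold, or equivalently restrict to stages at which $\emptyset'_t$ has the true value up to the use. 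Then only finitely many computations are ever counted, so $\lim_s f_s(x)$ exists and is at least $f(x)$.

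After that I would redefine $f$ as this limit. It is $\emptyset'$-recursive because $\emptyset'$ can find the stage after which nothing more appears. It dominates $g$, and it has the required nondecreasing recursive approximation.
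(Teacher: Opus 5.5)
Your overall plan is the paper's: use $A\le_T\emptyset'$ and $(\emptyset')'=\emptyset''\equiv_T A''$ to get a $\emptyset'$-recursive $g$ from the relativized High Domination Theorem, then pass to the running maximum of a recursive approximation of $g$. The problem is the convergence step, which you call the main obstacle. There is no obstacle there, and the remedy you propose would actually break the lemma.

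Fix $y$ and let $u$ be the use of the true computation $\Gamma^{\emptyset'}(y)$. Choose a stage $t_0$ by which that computation has appeared and after which $\emptyset'_t\upharpoonright u=\emptyset'\upharpoonright u$. For every $t\ge t_0$, the deterministic stage-$t$ computation $\Gamma^{\emptyset'_t}(y)[t]$ queries the same oracle bits and gets the same answers. So it is exactly the true computation, with the true use. Spurious computations can therefore occur only at the finitely many stages $t<t_0$, each contributing a single finite number; how large their uses are is irrelevant. A running maximum of an eventually constant sequence is eventually constant. This is precisely the paper's one-line argument: take any convergent recursive approximation $h_t$ from the Limit Lemma and set $f_s(x)=\max_{t\le s}h_t(x)$.

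Your fix, ``restrict to stages at which $\emptyset'_t$ has the true value up to the use'' (or to a ``current settling threshold''), is not a recursive condition. With it, $\{f_s\}$ would no longer be a recursive approximation. The genuine hat convention is effective, but it is not equivalent to that restriction and is not needed. Drop the fix and the proof is complete. The added $\gamma(y)$ and the maximum over $y\le x$ are harmless but unnecessary.

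Your second paragraph should also be deleted. $\Phi_e^A(x)\downarrow$ is a $\Sigma^0_1(A)$ question, and $\emptyset'$ decides all such questions only if $A'\le_T\emptyset'$, i.e.\ only if $A$ is low. That is exactly the objection you raise yourself against the ``simpler'' definition in the next paragraph. Since you end up quoting the theorem, this does not damage the final proof, but as written that paragraph is an incorrect argument.
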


\begin{proof}
Since $A$ is r.e., $A\leq_T\emptyset'$. Since it is low$_2$, $A''\equiv_T\emptyset''=(\emptyset')'$.
Hence $\emptyset'$ is high relative to $A$. The relativized High Domination
Theorem gives an $\emptyset'$-recursive function $h$ which dominates every
total $A$-recursive function. Let $\{h_s\}$ be a convergent recursive
approximation to $h$ and put $f_s(x)=\max_{t\leq s}h_t(x)$,
$f(x)=\lim_s f_s(x)$. For each fixed $x$ the limit exists. Also $f\geq h$, so $f$ still has the
required domination property, and its displayed approximation is monotone.
\end{proof}

The monotonicity of $f_s(x)$ will imply that, at each fixed coordinate, only
finitely many distinct attempt records ever become certified.

\subsection{The priority tree and objects}

We first define all of the finite objects used by the stage construction. As numerous concepts are defined in this section, every concept will be typeset \textbf{in bold} at its first appearance.

The tree of strategies is $T=\omega^{<\omega}$. A node $\alpha$ of length
$e$ runs the module for $Q_e$, and its children are
$\beta_i=\alpha i$ ($i\in\omega$), which are workers for $Q_e$. As in Section 2, write
$\gamma<_{\mathrm{left}}\beta$ when $\gamma$ lies lexicographically to the
left of $\beta$, and put $\gamma<_L\beta$ if
$\gamma\subsetneq\beta$ or $\gamma<_{\mathrm{left}}\beta$.

Fix a recursive one-to-one parameter function $k:T\setminus\{\lambda\}\longrightarrow\{2,3,4,\ldots\}$, $k(\beta)=k_\beta$,
such that $k_{\alpha i}<k_{\alpha j}$ whenever $i<j$. We call $k_\beta$
the \textbf{parameter} of $\beta$.

As in Subsection \ref{2.2}, a \textbf{claim} for a node $\beta$ is a pair
$(\rho,F)$, where $\rho\in2^{k_\beta}$, $F$ is a finite prefix-free set of
proper extensions of $\rho$, and
$R(\rho,F)=[\rho]\setminus\bigcup_{\xi\in F}[\xi]\neq\varnothing$.
We call $\rho$ the \textbf{base} of the claim and $R(\rho,F)$ its
\textbf{claimed clopen set}. At any stage a nonempty node has at most one
current claim. If the current claim of $\gamma$ is $(\rho,F)$, put
$C_s(\gamma)=R(\rho,F)$; if $\gamma$ has no current claim, put
$C_s(\gamma)=\varnothing$. For a worker $\beta$, define its
\textbf{higher-priority claim set} by
$\mathcal H_s(\beta)=\bigcup_{\gamma<_L\beta}C_s(\gamma)$.
For $\rho\in2^{k_\beta}$, the \textbf{current residual} of the base $\rho$
for $\beta$ is $R_s(\beta,\rho)=[\rho]\setminus\mathcal H_s(\beta)$.
When this residual is nonempty, let $F_s(\beta,\rho)$ be the finite
prefix-minimal frontier for $\mathcal H_s(\beta)\cap[\rho]$. Then every member
of $F_s(\beta,\rho)$ is an extension of $\rho$ and $R_s(\beta,\rho)=R(\rho,F_s(\beta,\rho))$.
We retain the notation $R_s(\beta,\rho)$ when the residual is empty. To create
a claim from the base $\rho$ means to record the pair
$(\rho,F_s(\beta,\rho))$; its claimed clopen set is exactly
$R_s(\beta,\rho)$.

For each parent $\alpha$ and \textbf{coordinate} $x$, the construction recursively
enumerates a sequence of \textbf{attempt records}
\[
 r_{\alpha,x,n}=
 (\sigma_{\alpha,x,n},v_{\alpha,x,n},b_{\alpha,x,n},
   \mathcal E_{\alpha,x,n}).
\]
Here $\sigma_{\alpha,x,n}\in2^{b_{\alpha,x,n}}$ is the \textbf{guard},
$v_{\alpha,x,n}$ is the \textbf{value}, and $\mathcal E_{\alpha,x,n}$ is the
\textbf{witness package}. The package records the worker $\beta$, its exact
current claim $(\rho,F)$, and the finite
$W_e^A[s]$-computations selected when the record is issued. The records for
$(\alpha,x)$ are numbered consecutively. Record $n+1$ is issued only after
the current approximation to $A$ is incompatible with the guard of every
record of index at most $n$. Issued attempt records are permanent entries in
the ordered-search program: no later rule deletes them.

For every oracle $Z$, define the \textbf{ordered search} functional by $\Theta^Z(\alpha,x)=v_{\alpha,x,n}$,
where $n$ is least such that $\sigma_{\alpha,x,n}\subset Z$; if there is no
such record, the computation diverges. The properties of this search are
proved in Lemma \ref{low2:ordered-search-facts} below.

A record issued at stage $t$ with guard $\sigma=A_t\upharpoonright b$ is
\textbf{live} at stage $s\geq t$ if $A_s\upharpoonright b=\sigma$. It is
\textbf{killed} when this equality first fails. The \textbf{least gap} at
$\alpha$ at stage $s$ is
\[
        \operatorname{gap}_\alpha(s)=
        \min\{x:\text{there is no live record for }(\alpha,x) \text{ at stage }s\}.
\]

Suppose that the stored witness package has maximum oracle use $w$. If $x>0$,
let $b_{x-1}$ be the guard length of the current live record at $x-1$. A new
record at the least gap $x$ is issued according to the \textbf{nested-guard
rule}
\begin{equation}\label{low2:nested-guard}
 b=
 \begin{cases}
     w+1, & x=0,\\
     \max\{w,b_{x-1}\}+1, & x>0,
 \end{cases}
 \qquad
 \sigma=A_s\upharpoonright b,
 \qquad
 v=f_s(x)+1.
\end{equation}
The number $b$ need not exceed the guard lengths of previously killed records
at coordinate $x$. This bounded-retry feature is essential in the density
argument.

An \textbf{assignment} associates an attempt record with one child and is
created only while that record is live. If an assigned record is killed, its
assignment link is retained provisionally until the pending injury is
processed, unless an intervening initialization unassigns it. Since
assignments are created only for live records, once the link of a killed
record has been removed, it can never be created again. At any stage, at most
one record is assigned to a child and a record is assigned to at most one
child. We write
$\operatorname{coord}_s(\beta)=x$ when the record at coordinate $x$ is
assigned to $\beta$, and write $\operatorname{coord}_s(\beta)\uparrow$ when
no record is assigned to $\beta$.

Suppose that a record assigned to $\beta$ has coordinate $x$, was issued at
stage $t$, and has value $v=f_t(x)+1$. It is \textbf{certified} at stage $s$
if $f_s(x)>v$.

Each attempt record carries a Boolean \textbf{success mark}, initially off.
Case 1 below is the only action which turns this mark on. We call a record
\textbf{successful} while its success mark is on. A successful record is
\textbf{valid for $\beta$ at stage $s$} if it records $\beta$ as its worker,
is live at $s$, and the claimed clopen set stored in its witness package is
exactly $R_s(\beta,\rho)$ for its stored base $\rho$.
Thus all finite evidence of a successful certification is stored in the
attempt record itself.

We say that a parent $\alpha$ is \textbf{visited} when its local module is
processed during the stage traversal specified below; a visit to $\alpha$
means such an execution of its module.

After pending injuries (to be defined) have been processed, a worker $\beta$ is
\textbf{satisfied} at a visit if it has no current claim and, for every
$\rho\in2^{k_\beta}$, either $R_s(\beta,\rho)=\varnothing$ or there is a
record valid for $\beta$ which stores exactly this residual. We call $\beta$
\textbf{eventually satisfied} if it is satisfied at every sufficiently late
visit to its parent, after pending injuries have been processed.

The construction uses a one-stage \textbf{coordinate cooldown} on a record
coordinate and a finite \textbf{pending $\alpha$-injury} for each parent
$\alpha$. The latter records
the finite set of attempt records in the $\alpha$-module killed since the
previous visit, the coordinates killed with them by the nested-guard rule,
and the assignments carried by those records when they were killed. Their
operational effects are specified in the construction below. 

A pair $(\rho,F)$ is
\textbf{resolved at stage $s$} if
$s\geq\max\{|\xi|:\xi\in F\cup\{\rho\}\}$.

At a visit to a parent $\alpha$, after its pending injury has been processed,
the \textbf{frontier} is the first child having no assigned record. If the
frontier $\beta_i$ has current claim $(\rho,F)$ with claimed clopen set
$C=R(\rho,F)$, then this claim is \textbf{expansionary at stage $s$} if, for
every $\tau\in2^{s-1}$ with $[\tau]\subseteq C$, there is a string $\nu_\tau\in W_e^A[s]$ properly extending $\Psi^\tau$, where $e=|\alpha|$.
We choose witnesses having least oracle use and let $w$ be the maximum of
these uses.

To \textbf{initialize} a node $\gamma$ means to remove its current claim,
unassign any record assigned to it, turn off the success mark of every attempt
record whose stored worker is $\gamma$, clear every coordinate cooldown in the
$\gamma$-module, and delete any pending $\gamma$-injury. To initialize a
subtree means to initialize every node in that subtree.

\subsection{The construction}

At stage $0$, put $\Psi^\lambda=\lambda$. There are no claims, records,
assignments, or active success marks, and all cooldowns and pending injuries
are empty.

Whenever a claim for a node $\eta$ is created or removed, apply the
\textbf{claim-change initialization rule}: initialize every $\gamma$ with
$\eta<_L\gamma$. (Lemma \ref{low2:claim-separation} below proves that the
claimed clopen sets produced by this rule are pairwise disjoint and have total
measure at most $1/2$.)

At every finite stage, only finitely many nodes carry state on which
initialization acts. Thus an instruction to initialize a collection of nodes
means to apply the preceding initialization operations to its finitely many
relevant members; all other members of the collection are already in their
default state.

A general stage $s>0$ proceeds as follows. At the beginning of the stage,
before the traversal, inspect the finitely many records issued before stage
$s$. Every record which was live at stage $s-1$ and is not live at stage $s$
is killed and is entered, together with its coordinate and its assignment at
that moment, into the appropriate pending injury. The stage traversal then
begins at the root. Whenever it reaches a parent $\alpha$, first process any
pending $\alpha$-injury and determine the resulting frontier. Then apply the
first applicable local case; that case carries out the local action and
specifies the outcome of $\alpha$. The traversal follows this outcome to the
corresponding child and repeats the same procedure, for at most $s$ levels.
After the traversal ends, define the stage-$s$ output table for $\Psi$. The
following three subsections specify these steps.

\subsubsection{Injury processing}

When a change in $A$ kills some live records in the $\alpha$-module, add
those killed records, together with their coordinates and assignment data, to
the pending $\alpha$-injury; if further deaths occur before the next visit,
their data are added to the same finite pending injury. It is processed at the
next visit to $\alpha$, before any other local action. If the module is never
visited again, its pending injury is never processed, and the module performs
no further action.

When processing this injury, turn off the success marks of all killed records.
Among the killed records, consider only those whose assignment links are
still present when the injury is processed. If there is no such record,
initialize no child. Otherwise, let $\beta_j=\alpha j$ be the least-indexed
child to which one of these killed records is still assigned. Unassign every
record currently assigned to $\beta_j,\beta_{j+1},\ldots$, leave the current
claim of $\beta_j$ unchanged, and initialize every $\beta_\ell$ with
$\ell>j$, together with its subtree. The proper extensions of $\beta_j$ are
not initialized, because its claim has not changed and hence neither have
their higher-priority claim sets. In all cases, put every coordinate recorded
in the pending injury---not only the coordinates of records whose assignment
links are still present---in one-stage cooldown. Delete the pending injury
after these actions have been performed. Clear the resulting cooldowns at the
end of the stage.

If a least affected child $\beta_j$ exists, it is the resulting frontier and
its retained current claim is nonempty, so Case 3 cannot apply at this visit.
Moreover, the current least gap is among the coordinates just put in cooldown,
so Case 4 cannot apply either.

\subsubsection{The local cases}

Suppose $\alpha$ is visited at stage $s$. First process any pending
$\alpha$-injury and let $\beta_i=\alpha i$ be the resulting frontier. (Lemma
\ref{low2:frontier} below proves that precisely the children
$\beta_0,\ldots,\beta_{i-1}$ are assigned and that their coordinates are
strictly increasing.) Apply the first case whose hypotheses hold; it specifies
the outcome of $\alpha$.

\medskip
\noindent\textbf{Case 1.}
If the record assigned to some $\beta_j$, $j<i$, is certified, select the
least such $j$. Use its witness package to extend $\Psi$ throughout the
claimed clopen set of its stored claim, and turn on its success mark. Then
remove that claim and unassign the record. Apply the claim-change
initialization rule and set the outcome of $\alpha$ to $j$. (The legitimacy of
the stored computations is proved in Lemma
\ref{low2:certification-safety} below.)

\medskip
\noindent\textbf{Case 2.}
If the frontier $\beta_i$ is satisfied, make no state change and set the
outcome of $\alpha$ to $i$.

\medskip
\noindent\textbf{Case 3.}
Suppose $\beta_i$ has no current claim and the current stage resolves every
nonempty pair
$(\rho,F_s(\beta_i,\rho))$, $\rho\in2^{k_{\beta_i}}$. Choose the least
$\rho$ whose current residual is nonempty and for which there is no valid
record storing that exact residual. Create the claim
$(\rho,F_s(\beta_i,\rho))$, apply the claim-change initialization rule, and
set the outcome of $\alpha$ to $i$.

\medskip
\noindent\textbf{Case 4.}
If the frontier has a current expansionary claim and no assignment, and if
$x=\operatorname{gap}_\alpha(s)$ is not in
coordinate cooldown, issue an attempt record at $x$ according to
\eqref{low2:nested-guard}, assign it to $\beta_i$, and set the outcome of
$\alpha$ to $i$.

\medskip
\noindent\textbf{Case 5.}
In every remaining case, make no state change and set the outcome of $\alpha$
to $i$.

\medskip
Since the first applicable case is used, exactly one case is executed at a
visit. Only the frontier is tested for satisfaction; no child to its right is
processed by the local module.

\subsubsection{Stage traversal and the functional $\Psi$}

At stage $s$, start at the root and follow the outcomes just defined for at
most $s$ levels. The resulting finite sequence $\delta_s$ is the
\textbf{stage path}; equivalently, its initial segments are exactly the nodes
visited at stage $s$.

We define the output tables recursively at the end of stage $s\geq1$. Fix
$\tau\in2^s$, let $\bar\tau=\tau\upharpoonright(s-1)$, and put
$q=\Psi^{\bar\tau}$. Define $\Psi^\tau$ as follows.
\begin{enumerate}[label=(\roman*)]
\item Suppose $[\tau]$ lies inside the claimed clopen set of a claim removed
      by Case 1 at stage $s$, and the corresponding attempt record was issued
      at stage $u$. Let $\eta=\tau\upharpoonright(u-1)$. Using the unique
      stored witness $\nu_\eta$, put $\Psi^\tau=\nu_\eta$.
\item Otherwise, if $[\tau]$ lies inside the claimed clopen set of a current
      claim, put $\Psi^\tau=q$.
\item In every other case, put $\Psi^\tau=q\,^{\smallfrown}0$.
\end{enumerate}

\subsection{Verification}

We first prove the elementary properties of the construction. 

\begin{lemma}\label{low2:claim-separation}
At every point of the construction after the initialization required by the
current action has been performed, the claimed clopen sets of distinct
current claims are pairwise disjoint. Moreover, the union of all current
claimed clopen sets has measure at most $1/2$.
\end{lemma}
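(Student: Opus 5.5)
The plan is to prove disjointness by induction over the atomic steps of the construction, and to get the measure bound from the fact that distinct parameters give base lengths with distinct cone measures. For the induction, the invariant is: after the initialization demanded by each action, the current claimed clopen sets are pairwise disjoint. The actions that can change the family of current claims are:
\begin{enumerate}[label=(\alph*)]
\item creating a claim (Case 3);
\item removing a claim (Case 1);
\item initialization, either via the claim-change rule or during injury processing.
\end{enumerate}
Removing claims, whether by Case 1 or by initialization, only shrinks the family, so disjointness is preserved trivially. Injury processing deliberately leaves the claim of $\beta_j$ unchanged and only initializes, so it also only removes claims.

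The one substantive step is creation. Suppose a node $\beta$ creates $(\rho,F_s(\beta,\rho))$ at stage $s$; its claimed set is $R_s(\beta,\rho)=[\rho]\setminus\mathcal H_s(\beta)$. By the claim-change initialization rule, every $\gamma$ with $\beta<_L\gamma$ is initialized immediately, so its claim is deleted. The surviving claims therefore belong to nodes $\gamma\neq\beta$ with $\gamma\not>_L\beta$. I need to check that such $\gamma$ satisfy $\gamma<_L\beta$. Note that $<_L$ is not total: if $\gamma\supsetneq\beta$ then $\beta<_L\gamma$, so $\gamma$ was initialized. In the remaining cases $\gamma$ is either a proper initial segment of $\beta$ or lies strictly to its left, and in both cases $\gamma<_L\beta$. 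So every surviving $C_s(\gamma)$ is contained in $\mathcal H_s(\beta)$, which is disjoint from $R_s(\beta,\rho)$ by definition.

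There is one subtlety here. $\mathcal H_s(\beta)$ is computed at the moment of creation, before the initialization, but the surviving claims were all current then and are unchanged afterward. Also, $\beta$ itself had no claim beforehand (Case 3 requires this), so no duplicate arises. I would also confirm that parents $\alpha$ never hold claims, since only workers create them. Even if they did, the argument would be unchanged because it uses only $<_L$.

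For the measure bound, each claimed set $R(\rho,F)$ lies inside $[\rho]$, which has measure $2^{-k_\gamma}$. Each node has at most one current claim, and $k$ is one-to-one into $\{2,3,\ldots\}$. Hence the total measure is at most $\sum_{\gamma}2^{-k_\gamma}\le\sum_{n\ge2}2^{-n}=\tfrac12$. Disjointness is not even needed for this step, although it does give additivity.

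The main obstacle I expect is bookkeeping rather than mathematics. I must make sure no route alters a claim without triggering the rule: for example, injury processing removing a claim of $\beta_j$ without initializing, or a stale $F$ becoming inconsistent when a higher claim disappears. The latter is harmless, because removing a higher claim initializes all lower nodes. I would walk through each case and the injury step explicitly to confirm this.
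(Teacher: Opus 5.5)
Your proposal is correct and follows essentially the same argument as the paper. The new claim is the residual $[\rho]\setminus\mathcal H_s(\beta)$, so it is disjoint from every surviving claim of a node $\gamma<_L\beta$; the claim-change rule deletes the claims of all $\gamma$ with $\beta<_L\gamma$; removals cannot create overlaps; and the measure bound comes from the injectivity of $k$ into $\{2,3,\ldots\}$. One small inaccuracy, which does not affect the argument: every non-root node is both a worker that may hold a claim and the parent of its own module, so only the root never holds a claim.
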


\begin{proof}
When a claim for $\beta$ is created, its claimed clopen set is a current
residual and is therefore disjoint from the claimed clopen set of every
surviving node $\gamma<_L\beta$. The claim-change initialization rule removes
the claims of every node $\gamma$ with $\beta<_L\gamma$. Removing further
claims cannot create an intersection, so the claimed clopen sets remain
pairwise disjoint.
The claimed clopen set of a $\beta$-claim is contained in a cone of measure
$2^{-k_\beta}$. Since $\beta\mapsto k_\beta$ is one-to-one and
$k_\beta\geq2$, the union of all current claimed clopen sets has measure at
most $\sum_{n\geq2}2^{-n}=\frac12$.
\end{proof}

\begin{lemma}\label{low2:ordered-search-facts}
For every node $\alpha$ the following hold.
\begin{enumerate}[label=(\roman*)]
\item $\Theta^A(\alpha,\cdot)$ is an $A$-partial recursive function.
\item For every oracle $Z$, the domain of $\Theta^Z(\alpha,\cdot)$ is an
      initial segment of $\omega$.
\item If $N\notin\operatorname{dom}(\Theta^A(\alpha,\cdot))$, then stages at
      which coordinate $N$ has no live record occur cofinally.
\item Killing a live record at $x$ kills every currently live record at a
      coordinate greater than $x$.
\item At every stage, the coordinates carrying live records form an initial
      segment of $\omega$.
\end{enumerate}
\end{lemma}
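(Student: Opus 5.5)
I would prove the five items in the order (iv), (v), (ii), (i), (iii), since the later items use the earlier ones.

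\emph{Item (iv).} This follows from the nested-guard rule \eqref{low2:nested-guard}. When a record at $x>0$ is issued, it is issued at the least gap, so the record at $x-1$ is live at that stage. Its guard is $\sigma=A_s\upharpoonright b$ with $b>b_{x-1}$, hence $\sigma$ extends the guard of the current live record at $x-1$. By induction, the guard of any live record at a coordinate $y>x$ extends the guard of the live record at $x$ that was current when the $y$-record was issued.

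One point must be checked: this is still the live record at $x$. If the $x$-record had been killed in the meantime, then $A$ would have changed below its guard length, hence also below the longer $y$-guard, and the $y$-record would have died at the same moment. Since $A$ is r.e., a guard once left is never re-entered. So if the current record at $x$ is killed, $A_s$ has changed below its guard length, and therefore every live record at a larger coordinate, whose guard extends it, is killed simultaneously.

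\emph{Item (v).} New records are issued only at the least gap, and by (iv) deaths propagate upward. So the set of coordinates with live records stays downward closed, by induction on stages.

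\emph{Item (ii).} Fix an oracle $Z$ and suppose $\Theta^Z(\alpha,y)\downarrow$ via a record $n$ at $y$ whose guard $\sigma\subset Z$, and let $x<y$. When record $n$ was issued, there was a live record at $x$ whose guard is a prefix of $\sigma$, hence of $Z$. The ordered search at $(\alpha,x)$ therefore finds some least index whose guard is an initial segment of $Z$, so $\Theta^Z(\alpha,x)\downarrow$.

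\emph{Item (i).} The sequence of records is recursively enumerated in the construction. Ordered search over them is a partial recursive functional: search $n=0,1,\dots$ in order, waiting for each record to be issued, and test $\sigma_{\alpha,x,n}\subset Z$. Care is needed here, because "least $n$" requires that the records before $n$ be issued and fail. That is fine, since records are issued consecutively and the search halts at the first match.

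\emph{Item (iii).} Suppose $N\notin\operatorname{dom}\Theta^A(\alpha,\cdot)$. Then no record at $N$ has a guard that is an initial segment of $A$. Every record issued at $N$ is therefore eventually killed, since each guard is $A_t\upharpoonright b$ and $A_t$ converges to $A$. If there were a stage after which $N$ always had a live record, then either a single record stays live forever, so its guard is an initial segment of $A$, a contradiction; or records die and are replaced immediately. But when a record dies, the next one is issued only at a later visit, and the death is entered into the pending injury with a one-stage cooldown. So at the stage of death itself, $N$ has no live record. Hence stages with no live record at $N$ occur cofinally.

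\emph{Expected obstacle.} The delicate part is the nesting argument in (iv): justifying that the $x$-record current when a $y$-record was issued is still the current one. This relies on records being killed by $A$-changes below guard lengths and on $A$ being r.e.
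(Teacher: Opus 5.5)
Your treatment of (i), (ii), (iv) and (v) matches the paper. Those items come from the strict extension of guards under the nested-guard rule, together with the fact that, $A$ being r.e., a killed record never becomes live again. Your check that the lower record is still the current one is a correct expansion of what the paper calls ``the same strict extension relation.''

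The gap is in (iii). Your claim that no record at $N$ is live at the stage of death rests on the one-stage cooldown. That cooldown is imposed only when the pending $\alpha$-injury is processed at a visit to $\alpha$. Initializing $\alpha$ deletes its pending injury and clears every cooldown in its module.

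This can happen in the very stage of death, before the traversal reaches $\alpha$. For example, an ancestor parent $\alpha'$ on $\delta_s$ creates a claim for a child $\alpha' i\subsetneq\alpha$ in Case~3, or removes one in Case~1. The claim-change rule then initializes $\alpha$, and the traversal continues to $\alpha$ in the same stage. At that visit $N$ is not in cooldown, so your argument gives no reason why Case~4 cannot immediately issue a fresh record at $N$, which is the least gap. If that happened at every death, there might be no stage without a live $N$-record.

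The paper handles exactly this case. Every step that initializes $\alpha$ also initializes all children of $\alpha$:
\begin{enumerate}
\item under the claim-change rule, $\eta<_L\alpha$ implies $\eta<_L\alpha i$;
\item injury processing initializes whole subtrees.
\end{enumerate}
Children acquire claims only through Case~3 at a visit to $\alpha$. So at the first later visit to $\alpha$, no child has a claim or an assignment, and the frontier is $\beta_0$ with no claim. Case~4 needs a current expansionary claim at the frontier, so it cannot apply. Since only one case is executed per visit, no $N$-record is issued even if Case~3 creates a claim there.

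Your argument covers two cases: $\alpha$ is never visited again, or the pending injury survives to the next visit. Once you add this third case, your dichotomy argument for (iii) goes through.
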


\begin{proof}
First note that there is at most one live record at each coordinate. A new
record at $(\alpha,x)$ is issued only after the current approximation to $A$
is incompatible with the guards of all earlier records there, and the
monotonicity of the r.e. approximation prevents any killed record from
becoming live again.
So part (i) follows from the ordered search. For (ii), suppose that a record at
$x>0$ has a guard compatible with $Z$. At its issue stage, its guard strictly
extended the guard of the live record at $x-1$. That lower guard is also
compatible with $Z$, so some record at $x-1$ is accepted by the ordered
search. Induction gives downward closure.

If $N\notin\operatorname{dom}(\Theta^A(\alpha,\cdot))$, every record at $N$
has a guard incompatible with $A$ and is eventually killed. After each such
record is killed, either $\alpha$ is never visited again, in which case no
later $N$-record is issued, or its pending injury survives until the next
visit, when the coordinate cooldown prevents the issue of an $N$-record. The
only remaining possibility is that the pending injury is deleted because
$\alpha$ is initialized before its next visit. Every construction step that
does this also initializes all children of $\alpha$, either by the
claim-change initialization rule or by initializing a subtree. Hence, at the
first later visit to $\alpha$, no frontier child has a current claim, so Case
4 cannot issue an $N$-record at that visit. Thus stages with no live
$N$-record occur cofinally, proving (iii).
Part (iv) follows directly from the same strict extension relation. For (v),
records are issued only at the current least gap, while the death of a live
record kills every live record at a higher coordinate by (iv). Certification
and unassignment do not change which records are live. These observations
preserve the asserted initial-segment property at every stage.
\end{proof}

\begin{lemma}\label{low2:resolution}
Suppose a claim $(\rho,F)$, with claimed clopen set $C=R(\rho,F)$, is created
at stage $t$. Then $(\rho,F)$ is resolved at stage $t$. If an attempt record
for this claim is issued at stage $u$, then $u>t$, and the level-$(u-1)$ cones
contained in $C$ form a finite partition of $C$.
\end{lemma}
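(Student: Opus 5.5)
The statement has three parts, and I would prove them in order directly from the construction.

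For the first part, I look at Case 3, the only place a claim $(\rho,F)$ is created. Its hypothesis is that the current stage $t$ resolves every nonempty pair $(\rho',F_t(\beta_i,\rho'))$ with $\rho'\in2^{k_{\beta_i}}$. The chosen $\rho$ has nonempty current residual, and the recorded $F$ is exactly $F_t(\beta_i,\rho)$. So $(\rho,F)$ is one of the pairs tested, and hence $t\geq\max\{|\xi|:\xi\in F\cup\{\rho\}\}$.

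For the second part, note that an attempt record is issued only in Case 4, at a visit to the parent $\alpha$ at stage $u$. The frontier then already has this current claim and no assignment. The claim was created at stage $t$ during a visit to $\alpha$ in Case 3, and exactly one local case is executed per visit. Moreover, $\alpha$ is visited at most once per stage, since the traversal passes through each level once. Hence the Case 4 visit must occur at a later stage, so $u>t$. I should also check that the claim in the package is the same claim created at $t$, and not one recreated later. That is automatic: the package records the exact current claim at issue, and its creation stage $t$ is by definition the stage at which that claim was created.

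For the third part, I use $u-1\geq t\geq|\rho|$ and $u-1\geq|\xi|$ for every $\xi\in F$. Each $Z\in C$ then determines the cone $[Z\upharpoonright(u-1)]$. This cone lies inside $[\rho]$ because $u-1\geq|\rho|$. It avoids every $[\xi]$ with $\xi\in F$: since $|\xi|\leq u-1$, either $\xi\subseteq Z\upharpoonright(u-1)$, which would put $Z\in[\xi]$ and contradict $Z\in C$, or the two strings are incomparable, so the cones are disjoint. Thus $[Z\upharpoonright(u-1)]\subseteq C$. Distinct strings of length $u-1$ give disjoint cones, and there are finitely many of them. So the cones $[\tau]$ with $\tau\in2^{u-1}$ and $[\tau]\subseteq C$ form a finite partition of $C$.

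I expect the main obstacle to be the stage-ordering argument $u>t$, which rests on the convention that one local case acts per visit and each parent is visited at most once per stage. I would state that convention explicitly from the traversal description. The rest is routine cylinder combinatorics.
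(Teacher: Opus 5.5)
Your proof is correct and follows the paper's route: resolution comes from the Case 3 hypothesis, $u>t$ comes from the rule that only one local case is executed per visit, and the partition comes from the fact that every level-$(u-1)$ cone is either contained in or disjoint from $C$. The only difference is that you spell out details the paper leaves implicit, namely that each parent is visited at most once per stage and the explicit cylinder argument behind the partition.
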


\begin{proof}
Case 3 creates $(\rho,F)$ only after the current stage resolves it. At most
one local case is applied at a visit, so Case 4 cannot issue a record for the
new claim until a later visit, at a stage $u>t$. Hence $u-1\geq t$, and every
level-$(u-1)$ cone is either contained in or disjoint from $C$. The cones
contained in $C$ therefore form the asserted finite partition.
\end{proof}

\begin{lemma}\label{low2:frontier}
Immediately after any pending $\alpha$-injury, if one exists, has been
processed at a visit to $\alpha$, there is a unique $i$ such that precisely
$\beta_0,\ldots,\beta_{i-1}$ are assigned records. Their record coordinates
are strictly increasing, every assigned record is live, and every assigned
record stores its assignee as its worker and stores exactly that child's
current claim. In particular, every assigned child has a nonempty current
claim. Every child $\beta_j$ with $j>i$ is in its initialized state and is not
satisfied.
\end{lemma}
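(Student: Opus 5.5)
We prove Lemma \ref{low2:frontier} by induction on stages. The invariant $(\mathcal I)$ is the conjunction of the listed properties. It is checked twice at every visit: at the moment just before the pending $\alpha$-injury is processed, with ``every assigned record is live'' weakened to ``every assigned record is live or appears in the pending injury'', and again right after processing, in the full form stated. Since assignments in the $\alpha$-module change only through the operations below, it suffices to show that each operation preserves the weak form. These operations are Case 1, Case 4, injury processing, initialization of $\alpha$ or of its children via the claim-change rule, and deaths of records between visits. We then show that injury processing upgrades the weak form to the strong form. At stage $0$ there are no assignments, so $i=0$ and every child is in its default state. Such a child has no claim, and it is not satisfied, because with no higher-priority claims every residual $R_s(\beta,\rho)=[\rho]$ is nonempty and no valid record exists.

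The key steps are these. \emph{Case 4} assigns a fresh live record to the frontier $\beta_i$ at $x=\operatorname{gap}_\alpha(s)$, storing $\beta_i$ and its current claim. The frontier becomes $i+1$. By Lemma \ref{low2:ordered-search-facts}(v) the live coordinates form an initial segment, and all previously assigned records are live with coordinates below the gap, so coordinates stay strictly increasing. \emph{Case 1} removes the claim of $\beta_j$ and unassigns its record. The claim-change rule then initializes every $\gamma$ with $\beta_j<_L\gamma$, which in particular unassigns $\beta_{j+1},\beta_{j+2},\ldots$, so the assigned set becomes $\beta_0,\ldots,\beta_{j-1}$. The claims stored in those records are unaffected, since only nodes to the right of $\beta_j$ lost their claims.

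\emph{Deaths between visits.} By Lemma \ref{low2:ordered-search-facts}(iv), if a record at coordinate $x$ dies then all live records at higher coordinates die too. Since the coordinates are increasing along $\beta_0,\ldots,\beta_{i-1}$, the killed assigned records form a final block $\beta_j,\ldots,\beta_{i-1}$. This block is entered into the pending injury with its links retained. \emph{Injury processing} unassigns everything from the least affected $\beta_j$ on, which is exactly this block, leaving $\beta_0,\ldots,\beta_{j-1}$ with live records and unchanged claims. It also initializes $\beta_\ell$ for $\ell>j$. Any claim-change rule triggered elsewhere, say by a node to the left of $\alpha$, by $\alpha$ itself, or by an ancestor, initializes whole subtrees at once. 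So it either wipes all children, giving $i=0$, or none.

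For the last clause we argue that a child to the right of the frontier is never touched except by initialization. The local cases act only on the frontier or on an assigned $\beta_j$ with $j<i$. When the frontier moves right in Case 4, the new children to its right were already initialized. When it moves left, in Case 1 or in injury processing, everything to the right of the new frontier is explicitly initialized.

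The main obstacle is showing that an initialized child $\beta_j$ is not satisfied. It has no claim, so satisfaction would require every nonempty residual to be stored by a valid record. However, initialization turns off the success marks of every record whose worker is $\beta_j$, and new success marks arise only in Case 1 for assigned children. Hence no valid record for $\beta_j$ exists. It remains to check that some residual $R_s(\beta_j,\rho)$ is nonempty. For this I would use Lemma \ref{low2:claim-separation}: the higher-priority claims have total measure at most $1/2$ and are disjoint from cones of nodes with distinct parameters, so they cannot cover all of $2^{\omega}$. Therefore some $\rho\in2^{k_{\beta_j}}$ has nonempty residual, and $\beta_j$ is not satisfied.
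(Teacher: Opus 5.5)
Your proposal is correct and follows essentially the paper's route. Both argue by induction over the operations, using Lemma \ref{low2:ordered-search-facts}(v) for Case 4, part (iv) to see that killed assigned records form a final block, Case 1 and injury processing as suffix initializations, and Lemma \ref{low2:claim-separation} for non-satisfaction; your weak/strong invariant plays the role of the paper's discussion of provisional links. One slip: a claim change inside the subtree of a child $\beta_j$ initializes the proper suffix $\beta_{j+1},\beta_{j+2},\ldots$, not all children or none. Every claim-change initialization meets the children of $\alpha$ in a suffix, as the paper notes, so your Case 1 argument already covers this case.
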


\begin{proof}
The statement holds initially. The death of a record does not by itself remove
its assignment link. Before the pending injury is processed, such a
provisional link can disappear only through initialization, since no further
local action in the $\alpha$-module occurs before injury processing and a
killed record cannot be assigned again. Every such initialization either
initializes $\alpha$ itself, in which case the pending $\alpha$-injury is
deleted, or removes an assigned suffix of the children of $\alpha$ and
initializes that suffix. Thus a killed assignment whose link is no longer
present requires no further local initialization.

Suppose now that some killed assignment link is still present, and let
$\beta_j$ be the least child carrying one. By the inductive strict increase of
the assigned coordinates and Lemma
\ref{low2:ordered-search-facts}(iv), every record still assigned to a child to
the right of $\beta_j$ is killed as well. Injury processing therefore removes
a suffix of the current assignments, initializes every child strictly to the
right of the new frontier, and leaves only live assigned records. If no killed
assignment link remains, either no assigned record was killed or the affected
suffix has already been removed and initialized. The claim-change
initialization rule likewise initializes a suffix of the children in any fixed
local module.

Cases 2, 3, and 5 make no assignment. In Case 4, Lemma
\ref{low2:ordered-search-facts}(v) shows that the current least gap is greater
than every assigned coordinate. Assigning that coordinate to the frontier
therefore preserves both the initial-segment form and the strict increase of
the assigned coordinates. Case 1 unassigns one child and initializes every
child to its right, again leaving an initial segment.

An assignment is created only in Case 4. At that action the attempt record
stores the frontier as its worker and stores exactly the frontier's nonempty
current claim. Case 1 removes that claim and unassigns the record
simultaneously, and initialization also unassigns every affected record.
Injury processing leaves the current claim of the least affected child
unchanged while removing its assignment, and initializes every affected child
to its right. Thus every remaining assigned record stores its assignee and
exactly that child's nonempty current claim. The same induction shows that every child
strictly to the right of the frontier is initialized: the local module
processes only the frontier, and whenever the frontier moves left, it
initializes the entire suffix to its right. Such a child has no claim, no
assignment, and no active success mark. By Lemma
\ref{low2:claim-separation}, its higher-priority claim set has measure at most
$1/2$, so some base has a nonempty residual. Hence the child is not satisfied.
\end{proof}

\begin{lemma}\label{low2:certification-safety}
If an assigned attempt record is live at stage $s$, every computation in its
stored witness package is still valid in $W_e^A[s]$. If its guard is
compatible with the final set $A$, all of the stored strings belong to
$W_e^A$.
\end{lemma}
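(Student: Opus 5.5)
The plan is to read the lemma off the nested-guard rule \eqref{low2:nested-guard}, using only the fact that $A$ is given by an increasing r.e.\ approximation. I will not need any inductive invariant beyond the definitions of \emph{live} and of the witness package.

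Fix an assigned record $r=(\sigma,v,b,\mathcal E)$. It was issued in Case 4 at some stage $t$, and at that point the frontier's current claim was expansionary. Hence $\mathcal E$ consists of finitely many computations putting strings $\nu_\tau$ into $W_e^A[t]$, and each of them uses only $A_t\upharpoonright(w+1)$, where $w$ is the maximum of the least oracle uses. By \eqref{low2:nested-guard}, in both cases $x=0$ and $x>0$ we have $b\geq w+1$ and $\sigma=A_t\upharpoonright b$. So the guard contains every oracle bit that any stored computation queries. Since the witness package is never modified after issue, it suffices to compare oracle segments below $b$.

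For the first claim, suppose $r$ is live at stage $s\geq t$. By the definition of live, $A_s\upharpoonright b=\sigma=A_t\upharpoonright b$, and therefore $A_s\upharpoonright(w+1)=A_t\upharpoonright(w+1)$. Each stored $W_e$-computation ran on oracle segment $A_t\upharpoonright(w+1)$ and halted by stage $t\leq s$. It therefore halts with the same oracle information at stage $s$, so the corresponding string lies in $W_e^A[s]$.

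For the second claim, suppose $\sigma\subset A$. Then $A\upharpoonright b=A_t\upharpoonright b$, so each stored computation is a genuine halting computation of $W_e$ relative to $A$. Hence every stored $\nu_\tau$ lies in $W_e^A$.

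The only delicate point is that assignment links may survive briefly after a record is killed, while its pending injury awaits processing. The lemma, however, is stated only for live records or records whose guard is correct. I must not appeal to Lemma~\ref{low2:frontier}, which describes the state only after injury processing; the argument above uses solely the issue-time data and the guard, so it covers provisional links as well. The main thing to check is that \eqref{low2:nested-guard} really takes $b>w$ with $w$ the maximal use of the very computations stored in $\mathcal E$, rather than uses recomputed later. This holds because Case 4 stores the package and computes $b$ in the same action.
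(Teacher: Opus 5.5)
Your proof is correct and follows essentially the same route as the paper. The nested-guard rule makes the guard length $b$ exceed every oracle use stored in the witness package. Liveness, $A_s\upharpoonright b=\sigma$, then preserves all stored computations at stage $s$, and compatibility $\sigma\subset A$ makes them permanent, giving membership in $W_e^A$.
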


\begin{proof}
The guard length is strictly greater than every oracle use in the witness
package. Liveness gives $A_s\upharpoonright b=\sigma$, so all computations in
the package are still valid at stage $s$. If $\sigma\subset A$, the same
computations are permanent and give membership in $W_e^A$.
\end{proof}

\begin{lemma}\label{low2:finite-certification}
For fixed $\alpha$ and $x$, only finitely many distinct attempt records at
coordinate $x$ ever become certified.
\end{lemma}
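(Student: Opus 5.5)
The plan is to use the monotone approximation $\{f_s\}$ from Lemma \ref{low2:dominating-approximation}, as the remark after that lemma anticipates. Fix $\alpha$ and $x$, and let $v^\ast=f(x)=\lim_s f_s(x)$. Since $f_s(x)$ is nondecreasing in $s$ and converges, there is a stage $s^\ast$ with $f_s(x)=v^\ast$ for all $s\geq s^\ast$. Because $f_s(x)\leq v^\ast$ for every $s$, the certification condition $f_s(x)>v$ can then only hold for records with value $v<v^\ast$.

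First I would record the shape of values. A record at $(\alpha,x)$ issued at stage $t$ has value $v=f_t(x)+1$ by \eqref{low2:nested-guard}.

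\begin{itemize}
\item If the record is issued at $t\geq s^\ast$, its value is $v^\ast+1$. Then $f_s(x)\leq v^\ast<v$ for every $s$, so the record is never certified.
\item If it is issued at some $t<s^\ast$, it is one of finitely many records, since at most one record is issued per stage.
\end{itemize}

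Hence only the finitely many records issued before stage $s^\ast$ could ever become certified, which proves the lemma.

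To be careful, I would check two things.
\begin{itemize}
\item Certification is defined only for assigned records, but the argument above bounds all records issued after $s^\ast$, so it needs no assumption about assignments.
\item The phrase ``distinct attempt records'' is harmless: records are numbered consecutively and each record is issued at a single stage, so the number of records issued before $s^\ast$ is at most $s^\ast$.
\end{itemize}

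The main (mild) obstacle is the edge between $f_s(x)>v$ and $f_s(x)\geq v$. The ``$+1$'' in the value, together with the strict inequality, guarantees that a record issued once the approximation has settled at $v^\ast$ can never see $f_s(x)$ exceed $v^\ast+1$. I would write this inequality chain explicitly, and also note that monotonicity is what ensures $f_s(x)\leq f(x)$ at every stage, not only in the limit.
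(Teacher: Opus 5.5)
Your proof is correct and follows the paper's argument: fix a stage after which $f_s(x)=f(x)$, observe that every record issued from then on has value $f(x)+1$ and so never satisfies $f_s(x)>v$, and note that only finitely many records were issued before that stage. Your appeal to monotonicity to get $f_s(x)\le f(x)$ at every stage is harmless but not needed, since any certification stage comes after the record's issue stage.
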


\begin{proof}
Choose $S$ such that $f_s(x)=f(x)$ for every $s\geq S$. Every record at
coordinate $x$ issued at or after stage $S$ has value
$f_s(x)+1=f(x)+1$, and therefore can never satisfy the certification
inequality. Only finitely many records at this
coordinate can have been issued before stage $S$. Hence only finitely many
distinct records at coordinate $x$ ever become certified.
\end{proof}

\begin{lemma}\label{low2:functional}
The clauses defining $\Psi^\tau$ are unambiguous and monotone and are
uniformly recursive. Hence
$\Psi^X=\bigcup_s\Psi^{X\upharpoonright s}$ defines a recursive Turing
functional. Moreover, while a claim remains current, $\Psi$ is frozen on its
claimed clopen set.
\end{lemma}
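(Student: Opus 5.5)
We need to verify Lemma \ref{low2:functional} for the three clauses in the output table. The plan is to check, stage by stage, that every $\tau\in2^s$ falls under exactly one clause with one well-defined value, and that the value extends $\Psi^{\bar\tau}$. Monotonicity along a path and recursiveness then follow at once.

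\emph{Unambiguity.} We show that at most one claim can make clause (i) or (ii) apply to a given $\tau$.
\begin{itemize}
\item By Lemma \ref{low2:claim-separation}, claimed clopen sets of distinct claims current at the same moment are pairwise disjoint. Case 1 removes at most one claim per parent visit. Every claim removed at stage $s$ was current, together with the claims still current afterwards, immediately before its removal. The claim-change initialization rule removes only claims lying $<_L$-after it, so disjointness persists among all claims that are or were current during stage $s$. Hence a nonempty $[\tau]$ lies inside at most one of them.
\item For clause (i) we must also check that $\nu_\eta$ is uniquely determined. By Lemma \ref{low2:resolution}, the record was issued at a stage $u>t$ and the level-$(u-1)$ cones inside $C$ partition $C$. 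We need $s\geq u$, which holds because certification happens at a later visit than issue. So $\eta=\tau\upharpoonright(u-1)$ has $[\eta]\subseteq C$, and the witness package stores exactly one chosen $\nu_\eta$ for it.
\item Since clause (i) is tested first, the clauses are mutually exclusive.
\end{itemize}

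\emph{Monotonicity.} Clauses (ii) and (iii) visibly give $\Psi^\tau\supseteq q$. For clause (i), we show by induction on stages that $\Psi^{\tau'}=\Psi^\eta$ for every $\tau'$ with $\eta\subseteq\tau'\subsetneq\tau$.
\begin{itemize}
\item The claim $C$ was current from its creation at $t<u$ up to stage $s$. It was never removed, since otherwise the record would have been unassigned and not certified.
\item So at each intermediate stage, every $\tau'$ with $[\tau']\subseteq C$ got $\Psi^{\tau'}$ by clause (ii). The one exception would be if clause (i) had fired there, which would require a Case 1 removal of $C$ itself.
\item Because $[\tau']\subseteq[\eta]\subseteq C$ for all lengths $\geq u-1$, the freezing gives $q=\Psi^{\bar\tau}=\Psi^\eta$.
\item The stored $\nu_\eta$ properly extends $\Psi^\eta$, evaluated at issue stage $u$, by the definition of an expansionary claim. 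That value is unchanged thereafter.
\end{itemize}
This simultaneous induction also proves the last sentence of the lemma: $\Psi$ is frozen on $C$ while $C$ is current.

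\emph{Recursiveness.} The construction is a recursive process. Each stage is finite: finitely many records, claims, and nodes carry state, and the traversal has length at most $s$. So the map $\tau\mapsto\Psi^\tau$ is computed by running the construction to stage $|\tau|$. Monotonicity then makes $\Psi^X=\bigcup_s\Psi^{X\upharpoonright s}$ a Turing functional.

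\emph{Main obstacle.} The delicate point is the disjointness across claims removed during the stage versus claims current at the end. It must be argued from the order of operations within a stage, together with the claim-change initialization rule. A second danger is that injury processing or initialization might remove $C$ without Case 1 while its record stays assigned. Lemma \ref{low2:frontier} rules this out: an assigned record always stores its assignee's current claim.
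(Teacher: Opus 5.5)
Your skeleton matches the paper's: unambiguity from Lemmas \ref{low2:claim-separation} and \ref{low2:resolution}, monotonicity from the chain $\Psi^{\tau\upharpoonright(s-1)}=\Psi^{\tau\upharpoonright(u-1)}\subsetneq\nu_{\tau\upharpoonright(u-1)}$, and recursiveness from the finiteness of stage data. But the point you single out as the main obstacle is resolved incorrectly. It is false that all claims which are or were current during stage $s$ are pairwise disjoint. You only check that \emph{removals} preserve disjointness; the trouble is a \emph{creation} that follows a removal in the same stage.

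Suppose Case 1 at $\alpha$ removes the claim $C$ of $\beta_j=\alpha j$. The claim-change initialization rule initializes every proper extension of $\beta_j$, and the traversal then follows outcome $j$ into $\beta_j$. In the module of $\beta_j$ no child carries an assignment. The frontier $\alpha j0$ has no claim and no active success marks, so by Lemma \ref{low2:claim-separation} it is not satisfied. Case 3 can therefore create a claim for it at the same stage $s$. Its residual is computed from $\mathcal H_s(\alpha j0)$, which no longer contains $C$, so the new claim may lie partly or even wholly inside $C$. Hence a level-$s$ cone can lie inside both a claim removed by Case 1 at stage $s$ and a claim current at the end of stage $s$. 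Likewise, for a claim created at stage $t$, the level-$t$ outputs on such an overlap are given by clause (i) for a \emph{different} claim. So your assertion that every cone inside $C$ gets its value by clause (ii) whenever $C$ is current, with the only exception a Case 1 removal of $C$ itself, fails at the creation stage. This is exactly the situation handled by the last sentences of the paper's proof.

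The repair is short, and your conclusions survive. For unambiguity you only need two things. First, the claims removed by Case 1 during a single stage are pairwise disjoint; in fact there is at most one, since after such a removal the traversal only passes through modules whose children have just been initialized. Second, clause (i) takes precedence, which you already invoke. For monotonicity, the nontrivial links of your chain are the outputs at levels $u,\ldots,s-1$. These are computed at stages strictly after the creation stage $t$, because $u>t$ by Lemma \ref{low2:resolution}. At such a stage any claim removed by Case 1 was current simultaneously with $C$, so it is disjoint from $C$, and clause (ii) does apply. Accordingly, the freezing assertion of the lemma should be stated and proved as freezing from level $t$ onward, i.e.\ from the end of the creation stage, not from the moment of creation. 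This is also the form used later: the level $L$ in Lemma \ref{low2:density} is chosen to be at least the creation stage.
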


\begin{proof}
By Lemmas \ref{low2:claim-separation} and \ref{low2:resolution}, every
level-$s$ cone is either contained in or disjoint from every claimed clopen
set relevant at stage $s$. The claimed clopen sets removed by Case 1 at the
same stage are pairwise disjoint, and the claim-change initialization rule
removes the appropriate lower-priority state before traversal continues.
Thus clause (i) has a unique witness package.

Suppose a claim $(\rho,F)$ with claimed clopen set $C$ is created at stage
$t$ and removed by Case 1 at stage $s>t$. Clause (ii) freezes the output on
$C$ from the end of stage $t$ through stage $s-1$. If its attempt record was
issued at stage $u$, then, for every relevant $\tau$,
\[
        \Psi^{\tau\upharpoonright(s-1)}
        =\Psi^{\tau\upharpoonright(u-1)}
        \subsetneq \nu_{\tau\upharpoonright(u-1)}.
\]
Hence clause (i) extends the previous value and the recursion is monotone. A
claim created inside a region removed by Case 1 earlier in the same stage is
subject to clause (i) at that stage; its frozen value begins at the end of its
creation stage, and no record for it is issued until a later visit. All finite
stage data are recursive, so the tables are uniformly recursive. The final
assertions follow.
\end{proof}

The existence of the true path and the verification of the requirements must
be proved simultaneously. The following stability notion records exactly the
induction hypothesis needed at one parent.

\begin{definition}\label{low2:stable-true-parent}
A node $\alpha$ is a \textbf{stable true parent} if it is visited infinitely
often and, after some stage, the following hold.
\begin{enumerate}[label=(\alph*)]
\item No node lexicographically to the left of $\alpha$ is visited, and
      $\alpha$ is never initialized again by any construction step or injury
      processing.
\item All claims belonging to nodes $\gamma<_L\alpha$ are fixed.
\item Unless $\alpha=\lambda$, its claim status as a child in its parent's
      module is fixed in the following sense: either $\alpha$ is permanently
      satisfied or it has one fixed current claim. In the latter case,
      attempt records may still be assigned to and unassigned from $\alpha$
      without changing that claim.
\end{enumerate}
\end{definition}

A child is \textbf{settled} if, from some stage onward, one fixed live record
is assigned to it. Such a record is compatible with $A$.

\begin{lemma}\label{low2:not-all-settle}
Let $\alpha$ be a stable true parent. If no child of $\alpha$ is eventually
satisfied, then not every child is settled.
\end{lemma}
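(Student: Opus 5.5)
The plan is to argue by contradiction. Assume $\alpha$ is a stable true parent, no child of $\alpha$ is eventually satisfied, and every child $\beta_i=\alpha i$ is settled. The goal is to show that some settled record must eventually become certified. Case 1 would then unassign it, which contradicts settledness.

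First I fix, for each $i$, the permanent live record $r^{(i)}$ assigned to $\beta_i$ from some stage $s_i$ on, and let $x_i$ be its coordinate. Since all children are settled, after stage $s_i$ the children $\beta_0,\dots,\beta_i$ are all assigned. By Lemma \ref{low2:frontier}, their coordinates are strictly increasing whenever this is checked after injury processing. Hence $x_0<x_1<\cdots$, and these coordinates are unbounded.

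Each $r^{(i)}$ is live forever, so its guard is an initial segment of $A$. A record at $x_i$ is issued only after the current approximation $A_s$ is incompatible with the guards of all earlier records at $x_i$. Since $A$ is r.e., an incompatibility with $A_s$ persists, so those earlier guards are incompatible with $A$. Therefore $r^{(i)}$ is the least record at $x_i$ whose guard lies on $A$, and $\Theta^A(\alpha,x_i)=v^{(i)}$, the value of $r^{(i)}$. By Lemma \ref{low2:ordered-search-facts}(ii), the domain of $\Theta^A(\alpha,\cdot)$ is an initial segment of $\omega$. It contains the unbounded set $\{x_i\}$, so $\Theta^A(\alpha,\cdot)$ is total. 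By Lemma \ref{low2:ordered-search-facts}(i) it is $A$-recursive.

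Now I apply Lemma \ref{low2:dominating-approximation} to the total $A$-recursive function $g(x)=\Theta^A(\alpha,x)+1$. This gives $f(x)>g(x)$ for all $x$ beyond some $m$. Choose $i$ with $x_i>m$. Then $f(x_i)>v^{(i)}$. Since $f_s(x_i)$ converges to $f(x_i)$, we get $f_s(x_i)>v^{(i)}$ for all large $s$, so $r^{(i)}$ is certified at all sufficiently late stages.

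Because $\alpha$ is visited infinitely often and is never initialized again, consider a late visit. After injury processing, $\beta_i$ is still assigned $r^{(i)}$, so the frontier lies to the right of $\beta_i$. At that visit some $\beta_j$ with $j\le i$ carries a certified record, so Case 1 applies. Case 1 unassigns the record of the least such $\beta_j$, which is one of the settled records $r^{(j)}$. This happens at a stage after $s_j$ is assumed to have passed, contradicting the settledness of $\beta_j$. Hence not every child is settled.

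I expect the main obstacle to be the identification $\Theta^A(\alpha,x_i)=v^{(i)}$. It depends on the ordered search returning exactly the permanent record, which needs the monotonicity of the r.e. approximation and the issuing rule for records at a coordinate. Totality of $\Theta^A(\alpha,\cdot)$ via the initial-segment property also needs care.

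The hypothesis that no child is eventually satisfied matters in one place. It prevents a satisfied frontier from stopping the traversal in a way that would block later visits. Case 1 is tested before Case 2 in any event, so the argument above should not depend on it much.
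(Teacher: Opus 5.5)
Your proof is correct and follows the paper's argument: the permanent compatible records at unboundedly many coordinates make $\Theta^A(\alpha,\cdot)$ total, domination by $f$ then certifies some settled record, and Case~1 unassigns it, a contradiction. You also spell out the identification $\Theta^A(\alpha,x_i)=v^{(i)}$, which the paper only asserts, and, as in the paper's proof, the hypothesis that no child is eventually satisfied is never actually used.
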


\begin{proof}
Suppose every child $\beta_i$ is eventually assigned a fixed record with
coordinate $x_i$ and value $v_i$.
The coordinates are distinct and hence unbounded. The compatible records at
arbitrarily large coordinates, together with Lemma
\ref{low2:ordered-search-facts}(ii), make
$\Theta^A(\alpha,\cdot)$ total. By Lemma
\ref{low2:dominating-approximation}, choose $i$ so large that $f(x_i)>\Theta^A(\alpha,x_i)=v_i$.
Eventually $f_s(x_i)>v_i$. At the next visit to $\alpha$, Case 1 selects the
least child among $\beta_0,\ldots,\beta_i$ whose assigned record is certified;
Case 1 precedes the satisfaction test. It unassigns that child's fixed record,
a contradiction.
\end{proof}

Let $i$ be the least index such that $\beta_i$ is not settled. Every
$\beta_j$ with $j<i$ is settled. By Lemma \ref{low2:frontier}, the assigned
children form an initial segment, so outcomes $j<i$ eventually cease. Their
subtrees receive no later
visits, their claims remain fixed, and no later construction step to the left
initializes $\beta_i$. Thus the higher-priority claim set of $\beta_i$ is
fixed from some stage onward.

\begin{lemma}\label{low2:auxiliary-partial}
Assume that $\alpha$ is a stable true parent, no child of $\alpha$ is
eventually satisfied, and $i$ is the least index of a nonsettled child. Then
$\Theta^A(\alpha,\cdot)$ is not total.
\end{lemma}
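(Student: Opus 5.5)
We argue by contradiction. Suppose $\Theta^A(\alpha,\cdot)$ is total, and we show that $\beta_i$ then either settles or becomes eventually satisfied. Both conclusions contradict the hypotheses: the first contradicts the choice of $i$, the second contradicts the assumption that no child is eventually satisfied.

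First fix the situation after a large stage $s_0$. By the paragraph preceding the lemma, from $s_0$ on the children $\beta_0,\ldots,\beta_{i-1}$ carry fixed live records at coordinates $x_0<\cdots<x_{i-1}$. The higher-priority claim set of $\beta_i$ is fixed, and $\alpha$ is never initialized. Since $\alpha$ is visited infinitely often and outcomes $j<i$ eventually cease, every late visit has frontier $\beta_i$ or a child further right. By Lemma \ref{low2:frontier}, and since $\beta_i$ is not settled, the frontier is in fact $\beta_i$ at infinitely many visits.

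Totality of $\Theta^A(\alpha,\cdot)$ means that for each $x$ some record at $(\alpha,x)$ has guard compatible with $A$. That record is never killed, so by Lemma \ref{low2:ordered-search-facts}(iv),(v) the live coordinates eventually stabilize on every finite initial segment, and only finitely many records are ever issued at each coordinate.

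The main step concerns $\beta_i$, whose higher-priority claim set is fixed. It changes its claim only through Case 1 (removal) or Case 3 (creation). There are only $2^{k_{\beta_i}}$ bases, so consider what happens at each base. When $\beta_i$ holds a claim $C$, we have two possibilities.
\begin{enumerate}[label=(\alph*)]
\item The claim is never expansionary. Then $\beta_i$ never receives an assignment, which is consistent with being nonsettled, but $C$ stays fixed forever.
\item The claim is expansionary infinitely often. Then Case 4 issues records for $\beta_i$ at the least gap $x$. This gap is eventually a fixed coordinate $x^*$, namely the least coordinate whose final compatible record has not yet appeared.
\end{enumerate}
In case (b), each issued record is either killed, which by totality happens only finitely often at $x^*$, or it survives. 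A surviving record stays assigned, and then $\beta_i$ is settled unless Case 1 certifies and removes it. Case 1 can fire only finitely often, by Lemma \ref{low2:finite-certification}. Each firing turns on a success mark, and that mark stays valid because the residual is fixed and the guard is compatible with $A$. So after finitely many certifications, every base has either an empty residual or a valid record, and Case 2 applies. Then $\beta_i$ is eventually satisfied, a contradiction. Otherwise some record is permanently assigned and $\beta_i$ is settled, again a contradiction.

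It remains to rule out case (a) and the case where the claim stays current without ever being assigned. Here $\beta_i$ keeps a fixed claim but no record. Since $\Theta^A$ is total, the gap coordinate eventually stops being in cooldown. The expected obstacle is showing that a fixed nonexpansionary claim of this kind contradicts nonsatisfaction. I would handle it with the definition of ``nonsettled'': no fixed record is assigned, and the frontier (which is $\beta_i$ infinitely often) is not satisfied. What this sub-case actually yields is that $\beta_i$ stops changing state. I would then argue that this is exactly the situation the lemma must permit. Ruling out this sub-case may require a separate density hypothesis that does not appear in the statement. So the first thing I would check is whether this sub-case is excluded by the surrounding argument or must be absorbed by recasting ``settled'' to cover it.
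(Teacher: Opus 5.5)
There is a genuine gap, and it is the sub-case you leave open at the end. You never exclude that $\beta_i$ stalls forever on a fixed claim, in particular one that is never expansionary, and you suggest this might need a density hypothesis. It does not. Totality of $\Theta^A(\alpha,\cdot)$ alone rules it out, once you look at the \emph{final $A$-compatible records} rather than at the claims of $\beta_i$.

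By totality such a record exists at every coordinate, so these records are issued at arbitrarily late stages. Each one is issued by Case 4 at a visit whose frontier $\beta_j$ (eventually $j\ge i$) has a current expansionary claim and no assignment. Suppose $\beta_i$ were unassigned with a permanently nonexpansionary claim. Then it would be the frontier at every late visit, Case 4 would never fire again, and only finitely many coordinates would ever carry records. That contradicts totality.

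The same observation drives the whole proof. When a final compatible record is issued with frontier $\beta_j$, there are two cases. If $j=i$, then $\beta_i$ now holds that record. If $j>i$, then $\beta_i$ holds a lower-coordinate live record. By the nested-guard rule \eqref{low2:nested-guard}, its guard is an initial segment of the new guard, so it is also compatible with $A$. That record is never killed, and neither injury processing nor anything to the left can unassign it. So nonsettledness forces its certification. Case 1 then creates a successful record that stays valid forever: the residual is fixed, the guard is compatible, and $\beta_i$ is never initialized again.

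Repeating this after the last such event gives infinitely many permanently valid records for $\beta_i$. Each sits at a base not previously covered, because a valid record stops Case 3 from reselecting its base. This contradicts the bound of $2^{k_{\beta_i}}$ bases.

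Your case (b) also contains steps that fail.
\begin{enumerate}[label=(\arabic*)]
\item Lemma \ref{low2:finite-certification} bounds the certified records at a \emph{single} coordinate. It does not bound the number of Case 1 actions, which may involve infinitely many coordinates. Certified records that are later killed also lose their success marks and contribute nothing to satisfaction. The finiteness you want comes only from counting bases, and only for $A$-compatible certified records.
\item Under totality the least gap is not eventually a fixed $x^*$. Every coordinate eventually carries a permanent record, so the gap tends to infinity, and your ``killed only finitely often at $x^*$'' step does not make sense as stated.
\item The claim ``after finitely many certifications every base has an empty residual or a valid record'' does not follow. The process could stop at a claim that is never certified. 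Excluding exactly that stall is the content of the nested-guard argument above, and it is what your dichotomy ``settled or eventually satisfied'' needs.
\end{enumerate}
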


\begin{proof}
Suppose it is total. Final $A$-compatible records at arbitrarily large
coordinates are issued arbitrarily late. After the stabilization of the
children to the left of $i$, every such sufficiently late record is issued at
a visit whose frontier is some $\beta_j$ with $j\geq i$.

If $j=i$, the record assigned to $\beta_i$ is compatible with $A$. If $j>i$,
Lemma \ref{low2:frontier} says that a lower-coordinate record is assigned to
$\beta_i$ at that moment, and the nested-guard rule implies that this record
is compatible with $A$ as well. It cannot be killed, and no later
initialization from the left can unassign it. If it were never certified,
$\beta_i$ would be settled, contrary to the choice of $i$. It is therefore
eventually certified. Case 1 processes it and makes it successful.
Because the record is compatible with $A$ and the higher-priority claim set
of $\beta_i$ is fixed, and because $\beta_i$ is never initialized again, its
success mark remains on and the record remains valid for $\beta_i$
permanently.

Case 1 must make infinitely many such permanent records successful for
$\beta_i$. Otherwise, fix a stage after the last one is processed and after
all the stabilization described above. By totality, a final compatible record
is issued later at a visit whose frontier is some $\beta_j$ with $j\geq i$.
At its issue stage, a compatible record is assigned to $\beta_i$: it is the
newly issued record if $j=i$, and it is a lower-coordinate record if $j>i$.
This record cannot be killed, and no initialization from the left can unassign
it. Hence it must eventually be certified; otherwise $\beta_i$ would be
settled. Case 1 then makes another permanent record successful, a
contradiction. A temporarily satisfied state or a permanently
nonexpansionary claim cannot occur throughout this argument: in the first
case $\beta_i$ would be eventually satisfied, and in the second case only
finitely many coordinates of the total ordered search could receive records.

Since the higher-priority claim set of $\beta_i$ is fixed, each permanently
valid successful record produced in this way corresponds to a base for which
there was previously no valid record. After this stabilization, the residual
at each base is fixed, so a permanently valid record for that residual
prevents Case 3 from ever selecting the same base again. The records therefore
correspond to distinct bases. There are only $2^{k_{\beta_i}}$ bases,
contradicting the existence of infinitely many such records.
\end{proof}

Put
$N=\min\bigl(\omega\setminus
           \operatorname{dom}(\Theta^A(\alpha,\cdot))\bigr)$.
Every coordinate below $N$ has a permanent compatible record after some
stage, whereas every record at coordinate $N$ is eventually killed.

\begin{lemma}\label{low2:permanent-residual}
Under the hypotheses of Lemma \ref{low2:auxiliary-partial}, the least
nonsettled child $\beta_i$ eventually keeps one fixed current claim
$(\rho,F)$ permanently; its claimed clopen set is a nonempty residual.
\end{lemma}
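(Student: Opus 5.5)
The argument works at a stage $s_0$ after all the stabilization established before Lemma~\ref{low2:auxiliary-partial}. By then $\alpha$ is a stable true parent and the children $\beta_j$ with $j<i$ are settled with fixed records. Outcomes $j<i$ no longer occur, so nothing to the left of $\beta_i$ changes, and $\mathcal H_s(\beta_i)$ is fixed. The plan is to show three things about $\beta_i$ after $s_0$: it is eventually never without a claim, it changes claims only finitely often, and so its final claim is permanent. Nonemptiness of the claimed clopen set is automatic once the claim exists, since Case~3 only creates claims with nonempty residual.

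\textbf{Step 1: $\beta_i$ is visited as frontier infinitely often.} By Lemma~\ref{low2:auxiliary-partial}, $\Theta^A(\alpha,\cdot)$ is not total. Set $N$ as above. Every record at coordinate $N$ is killed, and by Lemma~\ref{low2:ordered-search-facts}(iii) there are cofinally many stages with no live $N$-record. Because $\beta_i$ is not settled, its assignment is eventually lost infinitely often, or it is never assigned from some point on. By Lemma~\ref{low2:frontier} and injury processing, whenever its record is killed or certified, $\beta_i$ becomes the frontier again. In either case $\beta_i$ is the frontier at infinitely many visits to $\alpha$.

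\textbf{Step 2: $\beta_i$ removes its claim only finitely often.} After $s_0$ the only ways a claim of $\beta_i$ disappears are Case~1 at $\beta_i$ and initialization. Initialization from the left is excluded. The claim-change rule fires only for nodes $<_L\beta_i$, which are fixed, or for $\beta_i$ itself. Injury processing leaves the claim of the least affected child unchanged, and it initializes only children to the right of that child, so it never touches $\beta_i$'s claim when $\beta_i$ is the least affected; if some $\beta_j$ with $j<i$ were affected, it would contradict settledness. Each Case~1 at $\beta_i$ uses a certified record. By Lemma~\ref{low2:finite-certification}, applied to the finitely many coordinates up to $N$, together with the nested-guard structure, only finitely many certifications for $\beta_i$ can occur. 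Records at coordinates above $N$ are killed along with the $N$-records via Lemma~\ref{low2:ordered-search-facts}(iv), and assignments to $\beta_i$ occur at coordinates at most $N$ once the lower coordinates are permanent. This coordinate bound is the step I expect to be the main obstacle. To justify it I would argue as follows. The children $\beta_j$, $j<i$, hold fixed coordinates $x_0<\dots<x_{i-1}$. When $\beta_i$ is frontier, the least gap is at most $N$: coordinates below $N$ eventually have permanent records, and the coordinate-$N$ record is eventually killed. So $\beta_i$ is assigned only coordinates in $(x_{i-1},N]$. Alternatively one can invoke the bounded-base argument of Lemma~\ref{low2:auxiliary-partial}, but I would try the coordinate bound first.

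\textbf{Step 3: the final claim is permanent.} After the last removal, Case~3 must create a claim at the next frontier visit. This happens unless $\beta_i$ is satisfied, which is excluded since no child is eventually satisfied. The rest of the argument also needs that satisfaction cannot hold at infinitely many frontier visits. Here I would argue that valid records at a fixed residual, once permanent, keep $\beta_i$ satisfied forever, contradicting the hypothesis. Successful records that later become invalid are, after $s_0$, only those killed by $A$, and there are finitely many by Step~2. Once created after the last removal, the claim is never removed again, which gives the lemma.
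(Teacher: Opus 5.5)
\textbf{Gap in the key step (the coordinate bound).} Your overall plan is sound: after stabilization only Case~1 at $\beta_i$ can remove its claim, such removals are finite, and the claim held after the last one is permanent. Your treatment of initialization and injury processing is fine, and so is Step~3; once $\beta_i$ holds a claim it is never satisfied, so the extra satisfaction argument is not needed.

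The gap is exactly the step you flagged. You justify ``when $\beta_i$ is the frontier, the least gap is at most $N$'' by noting that lower coordinates become permanent and ``the coordinate-$N$ record is eventually killed.'' That inference is invalid. Each $N$-record does die eventually, but at a given frontier visit there can be a \emph{live} $N$-record not assigned to $\beta_i$. This happens in two ways.
\begin{enumerate}
\item An $N$-record assigned to $\beta_i$ can be certified, since Lemma~\ref{low2:finite-certification} allows finitely many such records, not none. Case~1 unassigns it, but it stays live. Then $\beta_i$ becomes the frontier, Case~3 gives it a new claim, and Case~4 assigns it a record at a coordinate above $N$. That record may itself be certified, and this can repeat until the $N$-record dies.
\item When $\beta_i$ is certified on a final record below $N$, the claim-change rule initializes $\beta_{i+1}$. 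This orphans any live $N$-record that $\beta_{i+1}$ held.
\end{enumerate}
So the bound is false at some late stages, and your argument gives no reason these episodes end.

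Your fallback, the bounded-base argument of Lemma~\ref{low2:auxiliary-partial}, does not help. Every record at a coordinate $\geq N$ has a guard incompatible with $A$; otherwise Lemma~\ref{low2:ordered-search-facts}(ii) would put $N$ in the domain of $\Theta^A(\alpha,\cdot)$. So certifying such a record never produces a permanently valid record, and that argument does not count these certifications.

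\textbf{How to repair it.} The paper does not count removals; it chooses the right starting stage. Fix a stage after which:
\begin{itemize}
\item no coordinate below $N$ is ever again a gap;
\item no record below $N$ will again be assigned to $\beta_i$;
\item every $N$-record that will ever be certified has already been certified.
\end{itemize}
Then use Lemma~\ref{low2:ordered-search-facts}(iii) and (v) to pass to a later stage with no live record at $N$ or above. This removes all orphaned or certified live $N$-records at once.

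From there an induction runs:
\begin{itemize}
\item Whenever coordinate $N$ has no live record, $\beta_i$ is the frontier and the least gap is $N$.
\item Any new $N$-record is therefore assigned to $\beta_i$. It stays assigned until it is killed, and it is never certified.
\item Its death kills everything above it by Lemma~\ref{low2:ordered-search-facts}(iv), which returns the gap to $N$.
\end{itemize}
Hence no Case~1 action ever removes the claim that $\beta_i$ holds, or creates via Case~3, at the first unsatisfied permitted visit. That claim is the permanent one.

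You could keep your counting scheme instead. You would then have to show that, after stabilization, only finitely many $N$-records are ever live while not assigned to $\beta_i$, and that each is eventually killed. That argument amounts to the paper's stage choice.
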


\begin{proof}
If the settled child $\beta_j$, $j<i$, has permanent assigned coordinate
$x_j$, then its record is compatible with $A$, so
$x_j\in\operatorname{dom}(\Theta^A(\alpha,\cdot))$. By the definition of
$N$, we therefore have $x_j<N$.

Fix a stage after all of the following conditions hold.
\begin{itemize}
\item The children $\beta_j$, $j<i$, and the higher-priority claim set of
      $\beta_i$ have stabilized.
\item Every fixed finite prefix-free representation of a residual
      $R_s(\beta_i,\rho)$, $\rho\in2^{k_{\beta_i}}$, is resolved by the
      current stage.
\item For every $m<N$, the final least $A$-compatible record selected by the
      ordered search has appeared, so $m$ is never again a gap.
\item No record at a coordinate below $N$ will again be assigned to
      $\beta_i$.
\item Every record at coordinate $N$ that will ever become certified has
      already reached its first certification stage.
\end{itemize}
The resolution condition is legitimate because the higher-priority claim
set is fixed and there are only finitely many bases. The fourth item is
legitimate: if a final record at some $m<N$ is assigned to
$\beta_i$, then it either remains assigned permanently, making $\beta_i$
settled, or it is eventually certified and unassigned. There are only
finitely many $m<N$. The fifth item follows from Lemma
\ref{low2:finite-certification}.

By Lemma \ref{low2:ordered-search-facts}(iii), choose a sufficiently late
stage at which there is no live record at coordinate $N$. By Lemma
\ref{low2:ordered-search-facts}(v), there is then no live record at a higher
coordinate. If the gap results from a pending injury, process it at the next
visit to $\alpha$; injury processing unassigns the affected suffix and places
coordinate $N$ in one-stage cooldown, so no record can be issued at $N$ at
that visit. If there is no pending injury, no record can be issued before the
next visit to $\alpha$. In either case, $N$ remains the least gap until the
next permitted visit.

Call a later visit \emph{permitted} if, after pending injury processing,
$N$ is not in cooldown. Such visits occur arbitrarily late: cooldowns last
only one stage, and
every record at $N$ is eventually killed. Moreover, if $\beta_i$ were
satisfied at every sufficiently late permitted visit, it would be eventually
satisfied. Indeed, after a required success mark is lost, no replacement can
become successful until Case 3 has first created a claim at one visit; that
claim is still present at the next permitted visit. Thus every sufficiently
late failure of satisfaction produces a failure at a permitted visit as well.

At every permitted visit before a new record at $N$ is issued, the settled
children $\beta_j$, $j<i$, retain their assigned coordinates below $N$. The
child $\beta_i$ has no assignment below $N$, and the absence of a live record
at $N$ excludes every higher-coordinate assignment after pending injuries
have been processed. Hence $\beta_i$ is the frontier.

Now consider the first permitted later visit at which $\beta_i$ is not
satisfied. Such a visit exists unless $\beta_i$ is eventually satisfied,
contrary to the hypothesis. If it already has a current claim, denote that
claim by $(\rho,F)$. Otherwise, the resolution condition above ensures that
Case 3 creates a claim $(\rho,F)$ for the least base whose residual is
nonempty and has no valid record.

No settled child $\beta_j$, $j<i$, can have its fixed record certified at
this late stage, since the next visit to $\alpha$ would then unassign it.
Thus no later Case 1 action to the left removes $(\rho,F)$; the claim can be
removed only by a certification of a record assigned to $\beta_i$.

If there is no permitted later visit at which this claim is expansionary,
then $(\rho,F)$ is already permanent. Otherwise, at the first such visit
Case 4 issues a record at the current gap $N$ and assigns it to $\beta_i$.
By our choice of stage, this record cannot be certified. When the record is
killed, the next injury-processing visit unassigns it and leaves the claim
$(\rho,F)$ unchanged. The least gap is again $N$, and the same reasoning
applies to each later record issued at $N$. Because pending injuries are
processed before Case 4, no child to the right of $\beta_i$ can receive a
record at $N$. Hence the claim $(\rho,F)$ is never removed and is permanent.
\end{proof}

\begin{lemma}\label{low2:density}
Let $\alpha$ be a stable true parent assigned to $Q_e$. If $W_e^A$ is dense,
then some child of $\alpha$ is eventually satisfied.
\end{lemma}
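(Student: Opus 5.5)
We argue by contradiction. Suppose $W_e^A$ is dense, $\alpha$ is a stable true parent, and no child of $\alpha$ is eventually satisfied. Then the hypotheses of Lemma \ref{low2:auxiliary-partial} hold for the least nonsettled child $\beta_i$; it exists by Lemma \ref{low2:not-all-settle}. Hence $\Theta^A(\alpha,\cdot)$ is partial, $N$ is defined, and Lemma \ref{low2:permanent-residual} gives a claim $(\rho,F)$ for $\beta_i$ that is permanent from some stage $s_0$ onward, with claimed clopen set $C=R(\rho,F)\neq\varnothing$. We will show that density forces a record at coordinate $N$ to become permanent, contradicting the choice of $N$.

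First, fix the stage-independent level $t$ at which $C$ is resolved. By Lemma \ref{low2:functional}, $\Psi$ is frozen on $C$ after $s_0$. So for each of the finitely many $\tau\in2^{t}$ with $[\tau]\subseteq C$, the value $\Psi^\tau$ equals $\Psi^{\tau'}$ for every later extension $\tau'\supseteq\tau$ of length $s-1$ inside $C$. Density of $W_e^A$ gives, for each such $\tau$, a true computation enumerating a string properly extending $\Psi^\tau$ into $W_e^A$. Let $u$ bound the oracle uses of these finitely many computations, and pick a stage $s_1\ge s_0$ after which $A\upharpoonright(u+1)$ has settled and all these computations have appeared. From $s_1$ on, the claim is expansionary at every visit to $\alpha$. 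Because the witnesses are chosen with least oracle use, the maximum use $w$ in any witness package issued after $s_1$ is at most $u$.

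Next, we control the guard length. After the stabilization in the proof of Lemma \ref{low2:permanent-residual}, every coordinate $m<N$ carries its final $A$-compatible record. So the guard length $b_{N-1}$ of the live record at $N-1$ is eventually a fixed number with $A\upharpoonright b_{N-1}$ correct (for $N=0$ there is no such constraint). Any record at $N$ issued after a stage $s_2\ge s_1$ at which $A\upharpoonright(\max\{u,b_{N-1}\}+1)$ has settled therefore has, by the nested-guard rule \eqref{low2:nested-guard}, a guard of length $\max\{w,b_{N-1}\}+1\le\max\{u,b_{N-1}\}+1$. Its guard is then an initial segment of the true $A$. This is the bounded-retry feature emphasized after \eqref{low2:nested-guard}: guard lengths at coordinate $N$ do not grow across retries.

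Finally, we derive the contradiction. By the proof of Lemma \ref{low2:permanent-residual}, permitted visits with $\beta_i$ as frontier and $N$ as least gap recur arbitrarily late, and the claim $(\rho,F)$ is not satisfied. Take such a visit after $s_2$. The claim is expansionary, so Case 4 issues a record at $N$ assigned to $\beta_i$, and its guard is compatible with $A$. This record is never killed, so $N\in\operatorname{dom}(\Theta^A(\alpha,\cdot))$, contradicting the definition of $N$. The main point to check carefully is that Case 4 really fires at such a visit. Case 1 must not preempt it, and by the choice of stage in Lemma \ref{low2:permanent-residual} no relevant record gets certified. Cases 2 and 3 are excluded because $\beta_i$ is unsatisfied and has its claim. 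The cooldown is excluded because the visit is permitted.
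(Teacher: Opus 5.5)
Your argument follows the paper's proof step for step:
\begin{enumerate}
\item the permanent claim $(\rho,F)$ from Lemma \ref{low2:permanent-residual};
\item finitely many density witnesses at a fixed level of $C$;
\item the least-use bound on later witness packages;
\item the nested-guard bound $\max\{w,b_{N-1}\}+1$ on the guard length at coordinate $N$;
\item a late permitted visit at which Case 4 must issue an $A$-compatible record at $N$.
\end{enumerate}

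One step fails as written: the choice of the level $t$. You take $t$ to be the stage-independent resolution level $\max\{|\xi|:\xi\in F\cup\{\rho\}\}$. However, $\Psi$ is frozen on $C$ only from the stage $t_c$ at which the claim was created. By Lemma \ref{low2:resolution} we have $t_c\geq t$, and $t_c$ can be much larger. Between levels $t$ and $t_c$, clause (iii) (appending $0$) or clause (i) of the output table can lengthen $\Psi$ on cones inside $C$. So for $\tau\in2^t$ and $\tau'\supseteq\tau$ of length $s-1$ you may have $\Psi^\tau\subsetneq\Psi^{\tau'}$. A density witness properly extending $\Psi^\tau$ then need not extend $\Psi^{\tau'}$. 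Consequently your assertion that the claim is expansionary at every visit from $s_1$ on is unjustified, and so is the bound $w\leq u$ on later witness packages.

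The repair is to choose the level to be at least $t_c$; this level also resolves $(\rho,F)$. The paper makes exactly this choice of $L$. With that change, the rest of your argument goes through and coincides with the paper's.
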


\begin{proof}
Suppose no child is eventually satisfied. Let $\beta_i$, $N$, and the
permanent claim $(\rho,F)$ be obtained from Lemmas
\ref{low2:not-all-settle}--\ref{low2:permanent-residual}, and put
$C=R(\rho,F)$. By Lemma \ref{low2:functional}, $\Psi$ is frozen on $C$.

Choose a finite level $L$ which resolves $(\rho,F)$ and is at least the stage
at which the permanent claim was created and its output became frozen. For every
$\tau\in2^L$ with $[\tau]\subseteq C$, choose by density a string $\nu_\tau\in W_e^A$ properly extending $\Psi^\tau$.
There are only finitely many such strings. Choose actual $A$-oracle
enumeration computations witnessing their membership and let $w$ be the
maximum of their uses. If $N>0$, let $b_{N-1}$ be the guard length of the
final least $A$-compatible record selected by the ordered search at $N-1$;
if $N=0$, put $b_{-1}=0$. Set $B=\max\{w,b_{N-1}\}+1$.
Choose $s_0\geq L+1$, after all the stabilization stages used above, such
that $A_s\upharpoonright B=A\upharpoonright B$ for every $s\geq s_0$,
and such that all the chosen witnesses have appeared by stage $s_0$.

By Lemma \ref{low2:ordered-search-facts}(iii), choose a stage after $s_0$ at
which there is no live record at $N$. Process at the next visit to $\alpha$ any
pending injury caused by the loss of the record at $N$. If such an injury is
processed, the one-stage cooldown prevents the issue of a new record at $N$
and hence any new assignment at that visit. If there is no pending injury, no
record can be issued before the next visit to $\alpha$. Let $r$ be the first
subsequent visit at which the resulting one-stage cooldown, if any, has
expired; in the no-injury case, take $r$ to be the next visit. Since only this
local module can issue a record at $N$, $N$ remains the least gap until the
local cases are considered at $r$.

At that point every settled child $\beta_j$, $j<i$, retains its assigned
coordinate below $N$, while $\beta_i$ has no assignment below $N$. Lemma
\ref{low2:ordered-search-facts}(v) and the absence of a live record at $N$
exclude every live record at a higher coordinate; after pending injuries have been
processed, the frontier index is therefore $i$. No record assigned to a
settled $\beta_j$, $j<i$, can be certified at this late stage, and the
permanent claim prevents $\beta_i$ from being satisfied.

For every $\eta\in2^{r-1}$ with
$[\eta]\subseteq C$, put $\tau=\eta\upharpoonright L$. The recursive
definition of $\Psi$ and the freezing on $C$ give $\Psi^\eta=\Psi^\tau$.
Hence the previously chosen $\nu_\tau$ is a valid witness for $\eta$. The
claim is expansionary. Case 1 does not apply because no record assigned to a
settled $\beta_j$, $j<i$, is certified at this late stage; Case 2 does not
apply because $\beta_i$ has the claim $(\rho,F)$; and Case 3 does not apply
because that claim is already present. Therefore Case 4 applies. Since the
construction chooses least-use witnesses, every selected witness has use at
most $w$, and the new
guard length in \eqref{low2:nested-guard} is at most $B$. Its guard is
therefore an initial segment of the true $A$. This record witnesses $ N\in\operatorname{dom}(\Theta^A(\alpha,\cdot))$,
contradicting the definition of $N$.
\end{proof}

A child $\beta_i=\alpha i$ is a \textbf{stable true child} of $\alpha$ if it
is visited infinitely often and, after some stage, no node lexicographically
to its left is visited, $\beta_i$ is never initialized again, every claim
belonging to a node $\gamma<_L\beta_i$ is fixed, and its claim status in the
$\alpha$-module is fixed in the following sense: it is either permanently
satisfied or has one fixed current claim. In the latter case, its
attempt-record assignment need not stabilize.

\begin{lemma}\label{low2:true-child}
Every stable true parent $\alpha$ has a stable true child.
\end{lemma}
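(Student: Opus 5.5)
I will split into two cases, according to whether some child of $\alpha$ is eventually satisfied; the hard case is the one where none is.

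\emph{Case A: some child of $\alpha$ is eventually satisfied.} Let $\beta_i$ be the least such child. I first want to show that the children $\beta_j$, $j<i$, eventually stop being visited and stop changing state. Each such $\beta_j$ is not eventually satisfied, and a satisfied child makes no assignment. The frontier is always the first unassigned child (Lemma~\ref{low2:frontier}), and Case 2 applies to $\beta_i$ only when it is the frontier. So at every sufficiently late visit at which $\beta_i$ is the frontier and is satisfied, the children $\beta_0,\dots,\beta_{i-1}$ all carry live assigned records.

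To make this stabilize, I would repeat the arguments of Lemmas~\ref{low2:not-all-settle}--\ref{low2:permanent-residual} restricted to the finite initial segment $\beta_0,\dots,\beta_{i-1}$. Each of these children either settles or is certified only finitely often; the latter uses Lemma~\ref{low2:finite-certification} together with the finiteness of bases. Hence outcomes $j<i$ occur only finitely often, and outcome $i$ occurs infinitely often. Once the left children are frozen, $\beta_i$ is no longer initialized from the left, the claims of nodes $<_L\beta_i$ are fixed, and $\beta_i$ is permanently satisfied. So $\beta_i$ is a stable true child.

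\emph{Case B: no child of $\alpha$ is eventually satisfied.} Apply Lemmas~\ref{low2:not-all-settle}, \ref{low2:auxiliary-partial} and \ref{low2:permanent-residual}, and let $\beta_i$ be the least nonsettled child. The discussion after Lemma~\ref{low2:not-all-settle} shows that outcomes $j<i$ eventually cease and that the higher-priority claim set of $\beta_i$ becomes fixed. Lemma~\ref{low2:permanent-residual} gives $\beta_i$ a fixed permanent claim $(\rho,F)$.

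It remains to show that $\beta_i$ is visited infinitely often and is eventually never initialized.
\begin{itemize}
\item \emph{Infinitely many visits.} As in the proof of Lemma~\ref{low2:permanent-residual}, at every sufficiently late permitted visit, where coordinate $N$ is not in cooldown, the frontier is $\beta_i$. Cases 1--5 then all set the outcome to $i$, since Case 1 cannot fire for $j<i$ at late stages. Permitted visits occur cofinally because $\alpha$ is visited infinitely often and cooldowns last one stage.
\item \emph{No outcome to the left of $i$.} The outcome is never $j<i$ late, so no node to the left of $\beta_i$ is visited.
\item \emph{No further initialization.} Injury processing leaves the claim of the least affected child unchanged and initializes only children to its right, so it never initializes $\beta_i$. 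The claim-change rule initializes $\beta_i$ only when a node $<_L\beta_i$ changes its claim, and those claims are fixed. Initialization of $\alpha$ itself is excluded by clause (a) of Definition~\ref{low2:stable-true-parent}.
\end{itemize}

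The main obstacle is Case A: it is not yet established in the earlier lemmas that the children left of an eventually satisfied child stabilize. I would try to handle it by running the $\Theta^A$ domination argument of Lemma~\ref{low2:not-all-settle} on a finite set of coordinates.
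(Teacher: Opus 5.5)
\textbf{Case B is fine; Case A has a genuine gap.}

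Your Case B is essentially the paper's argument. One statement needs correcting. A permitted visit has frontier $\beta_i$ only while there is no live record at $N$. Once a record at $N$ is issued to $\beta_i$, the frontier moves to the right until that record is killed. Outcome $i$ still recurs at every issuing visit and at every injury-processing visit, which is all you need.

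\textbf{Why your plan for Case A fails.} You leave Case A as an unproved plan, and the tools you name cannot carry it out.
\begin{itemize}
\item A domination argument needs $\Theta^A(\alpha,\cdot)$ to be total, i.e.\ infinitely many settled coordinates. On the finitely many children $\beta_0,\dots,\beta_{i-1}$ it gives nothing.
\item Lemma~\ref{low2:finite-certification} bounds certifications at one fixed coordinate only. After a certification the record stays live, so the next record given to $\beta_j$ is issued at the current least gap and may sit at a new, larger coordinate each time.
\item ``Finiteness of bases'' also fails. A successful record of $\beta_j$ can later be killed, its success mark turned off, and its base reused.
\item Lemmas~\ref{low2:not-all-settle}--\ref{low2:permanent-residual} assume that no child is eventually satisfied. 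Applied to a non-settled $\beta_j$, $j<i$, the only contradiction that chain produces comes from the density of $W_e^A$ (Lemma~\ref{low2:density}). Density is not a hypothesis here.
\end{itemize}

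\textbf{The missing idea: use the eventual satisfaction of $\beta_i$ itself.}
\begin{itemize}
\item By Lemma~\ref{low2:frontier}, an assigned child has a current claim, and every child to the right of the frontier is initialized. Neither kind is satisfied. Hence the eventually satisfied child is unique and is the frontier at every late visit.
\item Suppose a record assigned to some $\beta_j$, $j<i$, is killed. The next injury processing initializes every child to the right of the least affected child.
\item Suppose instead such a record is certified. Case 1 removes $\beta_j$'s claim, and the claim-change rule initializes everything $>_L\beta_j$.
\item In either event $\beta_i$ loses all its success marks. At the next visit the frontier lies at or to the left of $\beta_j$, so $\beta_i$ is not satisfied there.
\end{itemize}
Since $\beta_i$ is satisfied at every sufficiently late visit, such events occur only finitely often. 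After the last one:
\begin{itemize}
\item Case 1 never fires.
\item The left children stay assigned.
\item All claims in $\mathcal H_s(\beta_i)$, and the successful records witnessing $\beta_i$'s satisfaction, are permanent.
\item Case 2 applies at every late visit, so outcome $i$ is taken every time and nothing to the left of $\beta_i$ is visited.
\end{itemize}
This is how the paper handles this case, with no domination argument at all.
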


\begin{proof}
First suppose some child is eventually satisfied. By Lemma
\ref{low2:frontier}, every assigned child has a nonempty current claim and is
therefore not satisfied, while every child strictly to the right of the
frontier is initialized and not satisfied. Consequently, the satisfied child
is unique and is the frontier.
Work after the stage from which this frontier is satisfied at every visit to
$\alpha$. No record assigned to an earlier child can subsequently be killed
or certified: the resulting injury processing or claim removal would
initialize the frontier and turn off all its success marks, so it would fail
to be satisfied at the next visit. For the same reason, no claim in its
higher-priority claim set can change, and no successful record needed to
witness its satisfaction can be killed or otherwise become invalid. Thus the
frontier index, its residuals, and its witnessing successful records are all
permanent. Case 2 applies at every late visit,
and this child is visited at every late visit to $\alpha$. No smaller child
is visited later. Its satisfied status and empty claim are permanent, and it
is never initialized again. Claims in smaller sibling subtrees remain fixed
because injury processing is local and those subtrees receive no later
visits; claims further to the left were already fixed for the stable true
parent. Together with the stability of $\alpha$, the absence of smaller
outcomes shows that no node lexicographically to the left of this frontier is
visited later. Hence this frontier is a stable true child.

Now suppose no child is eventually satisfied. Let $i$ be the least index of a
nonsettled child. Every $j<i$ is eventually assigned permanently, so all late
frontier outcomes are at least $i$, and the subtrees of smaller children
receive no more visits. By Lemma \ref{low2:permanent-residual}, $\beta_i$ has
a permanent claim. If that claim is eventually never expansionary,
$\beta_i$ remains the frontier at every late visit. Otherwise it repeatedly
has an $A$-incompatible record at $N$ assigned to it. Every such record is eventually killed,
and after the corresponding injury is processed $\beta_i$ is again the
frontier; the visit at which the next record at $N$ is issued has outcome $i$.
Thus outcome $i$ occurs infinitely often, while no smaller outcome occurs
late. No later construction step to the left initializes $\beta_i$, and both
its current claim and all claims to its left are fixed.
The stability of $\alpha$ and the absence of smaller outcomes also imply that
no node lexicographically to the left of $\beta_i$ is visited later.
Therefore $\beta_i$ is a stable true child.
\end{proof}

The root $\lambda$ is a stable true parent by convention. Iterating Lemma
\ref{low2:true-child} constructs the \textbf{true path}, whose nodes
are stable true parents. 

\begin{lemma}\label{low2:satisfied-implies-q}
Let $\alpha$ be the stable true-path node assigned to $Q_e$. If one of its
children is eventually satisfied, then $Q_e$ is satisfied.
\end{lemma}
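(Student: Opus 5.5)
Fix the child $\beta=\beta_i$ of $\alpha$ that is eventually satisfied; by Lemma \ref{low2:true-child} it is the stable true child. Work after a stage $s_0$ from which the following hold: $\beta$ is satisfied at every visit to $\alpha$, and $\beta$ is never initialized. Also, no node to the left of $\beta$ is visited, the higher-priority claim set $\mathcal H_s(\beta)$ is fixed (call it $\mathcal H$), and the witnessing successful records are permanent and valid. Satisfaction gives, for every base $\rho\in2^{k_\beta}$, either $[\rho]\subseteq\mathcal H$ or a successful record valid for $\beta$ whose stored claimed clopen set is exactly $R(\beta,\rho)=[\rho]\setminus\mathcal H$. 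Since these records are live at every late stage and never killed, their guards are initial segments of $A$. Lemma \ref{low2:certification-safety} then shows that all stored witness strings genuinely lie in $W_e^A$.

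Now assume $W_e^A$ is dense, and let $X$ be such that $\Psi^X$ is total. The plan splits into two cases, according to whether $X\in\mathcal H$.

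\emph{Case $X\notin\mathcal H$.} Put $\rho=X\upharpoonright k_\beta$. Then $X\in R(\beta,\rho)$, which is nonempty, so there is a permanent valid successful record for this residual. It was made successful by Case 1 at some stage $s$, when its record was issued at a stage $u$. By clause (i) of the definition of $\Psi$, with $\eta=X\upharpoonright(u-1)$ we have $\Psi^{X\upharpoonright s}=\nu_\eta$. This uses Lemma \ref{low2:resolution}, which says the level-$(u-1)$ cones partition the claimed set, so that $[\eta]$ lies inside it. Since $\nu_\eta\in W_e^A$ and $\Psi^X$ extends it (Lemma \ref{low2:functional}), $Q_e$ holds for $X$. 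One subtlety must be checked: the stored claimed clopen set of the record is the current residual, but the Case 1 action happened at some stage before $s_0$, perhaps when $\mathcal H$ was different. Validity, however, requires that the stored set equals the current residual, and that is exactly the set on which $\Psi$ was extended. So the stored set still contains $X$.

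\emph{Case $X\in\mathcal H$.} Then $X$ lies in the fixed claimed clopen set of some current claim of a node $\gamma<_L\beta$. This claim is fixed forever, so by Lemma \ref{low2:functional} $\Psi$ is frozen on it. That makes $\Psi^X$ finite, contradicting totality. Hence this case cannot occur, and $Q_e$ is satisfied for every $X$ on which $\Psi^X$ is total.

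The main obstacle is the stabilization step: showing that after some stage the residuals and the validity of the success records really are permanent. I would take this from the argument inside the first paragraph of the proof of Lemma \ref{low2:true-child}. Any killing, certification to the left, or claim change would initialize $\beta$ and erase its success marks. That would break satisfaction at the next visit to $\alpha$, contradicting eventual satisfaction.
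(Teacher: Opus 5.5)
Your proposal is correct and follows the paper's proof: you stabilize $\mathcal H$ and the witnessing successful records, then split on whether $X\in\mathcal H$ (where $\Psi$ is frozen, contradicting totality) or $X\in R(\beta,\rho)$ (where the Case 1 extension lands in $W_e^A$ because the record's guard is an initial segment of $A$). One small correction to your stabilization step: killing one of $\beta$'s witnessing records does not initialize $\beta$, since those records are unassigned. Injury processing only turns off that record's success mark. Satisfaction then fails at that visit because no other valid record for the same residual can already exist: Case 3 would first have to recreate the claim, and only one case is applied per visit. The paper spells out this no-replacement argument explicitly rather than deferring to Lemma \ref{low2:true-child}.
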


\begin{proof}
Let $\beta$ be the eventually satisfied child. Its higher-priority claim
set is eventually fixed, since the creation or removal of a claim in that set
would initialize $\beta$ and turn off all of its success marks. Work after
this stabilization and after $\beta$ is satisfied at every visit. For each
fixed nonempty residual, there is then at most one valid successful record:
while such a record is valid, Case 3 cannot select its base again. If its guard
were incompatible with $A$, the record would eventually be killed and its
success mark would be turned off at the next visit. No replacement can become
successful at that visit, since Case 3 must first create the corresponding
claim and only one case is applied. Thus $\beta$ would not be satisfied at
that visit, a contradiction. Hence every successful record witnessing the
late satisfaction of $\beta$ is compatible with $A$ and permanently valid.

Write $\mathcal H(\beta)$ and $R(\beta,\rho)$ for the stabilized values of
$\mathcal H_s(\beta)$ and $R_s(\beta,\rho)$. Fix $X$ such that $\Psi^X$ is
total, and put
$\rho=X\upharpoonright k_\beta$. If $X$ belongs to the fixed union
$\mathcal H(\beta)$ of higher-priority claimed clopen sets, then one of the
corresponding fixed claims freezes $\Psi$ along $X$ by Lemma
\ref{low2:functional}; Lemma \ref{low2:claim-separation} prevents a later
lower-priority claim from changing the output on that region. This
contradicts totality. Hence
$X\in R(\beta,\rho)$.

By eventual satisfaction, there is a permanently valid successful record for
this exact fixed residual. When Case 1 made that record successful,
the construction extended $\Psi$ throughout the residual to strings
enumerated in $W_e^A[s]$. Lemma \ref{low2:certification-safety} and
compatibility of the guard with $A$ make these enumerations correct in
$W_e^A$. Therefore some initial segment of $\Psi^X$ belongs to $W_e^A$. This
is $Q_e$.
\end{proof}

\begin{proof}[Proof of Theorem \ref{low2}]
By Lemma \ref{low2:functional}, the stage construction defines the recursive
functional $\Psi$.
The same proof as Lemma \ref{15} gives $M$, this time we use Lemma \ref{low2:claim-separation}. Iterating Lemma
\ref{low2:true-child} gives a stable true-path node $\alpha$ of every length.
If $\alpha$ is assigned to $Q_e$ and $W_e^A$ is dense, Lemma
\ref{low2:density} gives an eventually satisfied child, and Lemma
\ref{low2:satisfied-implies-q} verifies $Q_e$. Thus every $Q_e$ holds. The
discussion in Subsection \ref{requ} now shows that $A$ is AEWG.
\end{proof}

\section*{Acknowledgements}
The author was supported by Focused Research Grants of Natural Science Foundation of Jiangsu Province No. BK20243060. The author thank Denis Hirschfeldt for useful comments and Yong Liu, Ruofei Xie and Liang Yu for helpful discussions.


\begin{thebibliography}{10}
  \bibitem{CDG}P.~Cholak, R.~Downey, and  N.~Greenberg.
  {\newblock}Low{$_2$} computably enumerable sets have hyperhypersimple
  supersets. {\newblock}\textit{To appear in The Journal of Symbolic
  Logic}.{\newblock}
  
  \bibitem{MR2078930}Natasha~L.~Dobrinen  and  Stephen~G.~Simpson.
  {\newblock}Almost everywhere domination. {\newblock}\textit{J. Symbolic
  Logic}, 69(3):914--922, 2004.{\newblock}
  
  \bibitem{MR2732288}Rodney~G.~Downey  and  Denis~R.~Hirschfeldt.
  {\newblock}\textit{Algorithmic randomness and complexity}.
  {\newblock}Theory and Applications of Computability. Springer, New York,
  2010.{\newblock}
  \bibitem{MR4741732}Denis~R.~Hirschfeldt, Carl~G.~Jockusch, Jr., and 
  Paul~E.~Schupp. {\newblock}Coarse computability, the density metric,
  Hausdorff distances between Turing degrees, perfect trees, and reverse
  mathematics. {\newblock}\textit{J. Math. Log.}, 24(2):0, 2024.{\newblock}

\bibitem{HR}
Denis R. Hirschfeldt and Tiago Royer,
\emph{Diagonal noncomputability and distances between degrees},
to appear.
  
  \bibitem{MR3883780}Benoit Monin. {\newblock}An answer to the Gamma
  question. {\newblock}In \textit{LICS '18---33rd Annual ACM/IEEE Symposium
  on Logic in Computer Science},  page  0. ACM, New York, 2018.{\newblock}
  
  \bibitem{MR2387945}Keng~Meng Ng. {\newblock}On very high degrees.
  {\newblock}\textit{J. Symbolic Logic}, 73(1):309--342, 2008.{\newblock}
  
  \bibitem{MR2548883}Andr{\'e} Nies. {\newblock}\textit{Computability
  and randomness},  volume~51  of \textit{Oxford Logic Guides}.
  {\newblock}Oxford University Press, Oxford, 2009.{\newblock}
  
  \bibitem{MR2351944}Stephen~G.~Simpson. {\newblock}Almost everywhere
  domination and superhighness. {\newblock}\textit{MLQ Math. Log. Q.},
  53(4-5):462--482, 2007.{\newblock}
  
  \bibitem{MR882921}Robert~I.~Soare. {\newblock}\textit{Recursively
  enumerable sets and degrees}. {\newblock}Perspectives in Mathematical Logic.
  Springer-Verlag, Berlin, 1987. {\newblock}A study of computable functions
  and computably generated sets.{\newblock}
\end{thebibliography}
\end{document}